\newtheorem{theorem}{Theorem}
\newtheorem{definition}{Definition}
\newtheorem{prop}{Proposition}
\newtheorem{lemma}{Lemma}
\newtheorem{cor}{Corollary}
\newtheorem{example}{Example}
\newcommand{\lfis}{{\bf LFI}s}
\newcommand{\mT}{\mathcal{FT}}
\title{\textbf{Absolutely Free Hyperalgebras}}
\author{Coniglio, Marcelo E.\thanks{coniglio@unicamp.br} }
\author{Toledo, Guilherme V.\thanks{guivtoledo@gmail.com}}
\affil{Institute of Philosophy and the Humanities - IFCH and\\
Centre for Logic, Epistemology and The History of Science - CLE\\
University of Campinas - Unicamp\\
Campinas, SP, Brazil}
\providecommand{\keywords}[1]{\textbf{\textit{Keywords:}} #1}
\begin{document}

\setcounter{page}{1}     

\maketitle

\begin{abstract}
In abstract algebraic logic, many systems, such as those paraconsistent logics taking inspiration from da Costa's hierarchy, are not algebraizable by even the broadest standard methodologies, as that of Blok and Pigozzi. However, these  logics can be semantically characterized by means of non-deterministic algebraic structures such as Nmatrices, RNmatrices and swap structures. These structures are based on multialgebras, which  generalize algebras by allowing the result of an operation to assume a non-empty set of values. This leads to an interest in exploring the foundations of multialgebras applied to the study of logic systems.

It is well known from universal algebra that, for every signature $\Sigma$, there exist algebras over $\Sigma$ which are absolutely free, meaning that they do not satisfy any identities or, alternatively, satisfy the universal mapping property for the class of $\Sigma$-algebras. Furthermore, once we fix a cardinality of the generating set, they are, up to isomorphisms, unique, and equal to algebras of terms (or propositional formulas, in the context of logic). Equivalently, the forgetful functor, from the category of $\Sigma$-algebras to $\textbf{Set}$, has a left adjoint. This result does not extend to multialgebras. Not only multialgebras satisfying the universal mapping property do not exist, but the forgetful functor $\mathcal{U}$, from the category of $\Sigma$-multialgebras to $\textbf{Set}$, does not have a left adjoint.

In this paper we generalize, in a natural way, algebras of terms to multialgebras of terms, whose family of submultialgebras enjoys many properties of the former. One example is that, to every pair consisting of a function, from a multialgebra of terms to another multialgebra, and a collection of choices (which selects how a homomorphism approaches indeterminacies), it corresponds a unique homomorphism, which ressembles the universal mapping property. Another example is that the multialgebras of terms are generated by a set that may be viewed as a strong basis, which we call the ground of the multialgebra. Submultialgebras of multialgebras of terms are what we call weakly free multialgebras. Finally, with these definitions at hand, we offer a simple proof that multialgebras with the universal mapping property for the class of all multialgebras do not exist and that $\mathcal{U}$ does not have a left adjoint.
\end{abstract}

\keywords{Algebras of terms, universal mapping property, absolutely free algebras, multialgebras, hyperalgebras, non-deterministic algebras, category of multialgebras, non-deterministic semantics.}

\section{Introduction}
An interesting and fruitful strategy in contemporary formal logic is trying to find  an algebraic counterpart for a given logic or family of logics. This is the main goal of the area of mathematical logic known as {\em algebraic logic}, or {\em abstract algebraic logic} (AAL) in a more general perspective. 

The idea behind (traditional) algebraic logic is to develop an algebraic study of a given class of models (algebras) associated to a given logic. For instance, it can be insightful to study the relationship between Boolean (Heyting, respectively) algebras and propositional classical (intuitionistic, respectively)  logic, while an important area of mathematical fuzzy logic deals with the relationship between fuzzy logics and certain classes of residuated lattices. In turn,  AAL is more interested in analyzing and classifying the algebraization methods {\em per se}. As one would expect, the scope of (abstract) algebraic logic is far from being universal: there are important classes of logics which lie outside the usual methods and techniques of AAL. 

A good source of examples to this phenomenon can be found in the field of paraconsistency. Because of this, certain classes of paraconsistent logics, as the ones known as {\em logics of formal inconsistency},\footnote{\lfis, introduced in~\cite{CM} and coming from the tradition of da Costa's approach to paraconsistency (\cite{dC63})} are characterized by means of semantics of non-deterministic character such as non-deterministic matrices, Fidel structures or swap structures   (see for instance~\cite{CC16}). Besides giving a semantical characterization, as well as a decision procedure, for these logics, such non-deterministic structures constitute an interesting object of study by themselves  (see, for instance, \cite[Chapter~6]{CC16}, \cite{CFG} and~\cite{CT20}). 

It is worth observing that non-deterministic matrix semantics (introduced in~\cite{AvronLev}) and, more generally, swap structures semantics, are (classes of)  {\em multialgebras} equipped with a subset of designated elements of their domains, which generalize the very idea of logical matrices.  Multialgebras, also known as {\em hyperalgebras} or {\em non-deterministic algebras}, introduced in \cite{Marty}, generalize the concept of algebra by replacing operations by {\em multioperations} (or {\em hyperoperations}), whose results assume multiple values, that is, a subset of the universe. Here, we will restrict ourselves to multialgebras whose operations cannot return an empty set of values, which is a common requirement when working with non-classical logics and their semantics.


In the realm of universal algebra, it is a well known result (\cite{Burris}) that there exist algebras $\mathcal{A}$ over a given signature $\Sigma$ that satisfy the so-called {\em universal mapping property} for the class of all $\Sigma$-algebras over some subset $X$ of their universe $A$. This property says that, for any other $\Sigma$-algebra $\mathcal{B}$ with universe $B$ and  any function $f:X\rightarrow B$, there exists exactly one homomorphism $\overline{f}$ between $\mathcal{A}$ and $\mathcal{B}$ that extends $f$.  Such algebras are called (absolutely) free $\Sigma$-algebras generated by $X$. Moreover, any free $\Sigma$-algebra generated by $X$ is isomorphic to the $\Sigma$-algebra of terms over $X$, which will be denoted here  by $\textbf{T}(\Sigma, X)$. Thus, free algebras are unique up to isomorphisms. In the language of categories, the existence of free $\Sigma$-algebras  means that the forgetful functor $U:\textbf{Alg}(\Sigma)\rightarrow\textbf{Set}$, from the category of $\Sigma$-algebras to the category of sets, has a left adjoint $F$, associating to a set $X$ any $\Sigma$-algebra with the universal mapping property over $X$ (which, as mentioned above, can be taken as being  $\textbf{T}(\Sigma, X)$). 

While algebras satisfying the universal mapping property always exist, and are (up to isomorphisms) algebras of terms, the situation is quite different in the context of multialgebras. 
Indeed, it is well-known that multialgebras satisfying the universal mapping property do not exist, and so the forgetful functor $\mathcal{U}:\textbf{MAlg}(\Sigma)\rightarrow\textbf{Set}$, from the category of multialgebras over the signature $\Sigma$ to the category of sets, does not have a left adjoint. This means that any possible  ``multialgebra of terms'' generalizing in some sense the notion of algebra of terms to the category of multialgebras necessarily will not satisfy the  universal mapping property. A new prof of this fact will be given in Section~\ref{UMP}.


The aim of this paper is proposing a very natural generalization to the category of multialgebras of the concept of algebra of terms by means of a family $\mT(\Sigma,\mathcal{V})$ of multialgebras of terms indexed by  the cardinals $\kappa>0$. Any submultialgebra of a multiagebra of this family  satisfies several equivalent characterizations, which are  necessarily weaker than the standard characterization of absolutely free algebras by means of the universal mapping property. 
We propose the novel notion of {\em weakly free $\Sigma$-multialgebras} as those multialgebras satifying any, and therefore all, of these weaker conditions. In particular, all of them are isomorphic to a submultialgebra of a multialgebra in  $\mT(\Sigma,\mathcal{V})$, for some $\mathcal{V}$.

This paper is organized as follows:  Section~\ref{Definition} proposes a natural notion of multialgebras of (non-deterministic) terms. In Section~\ref{Characterizations}, five equivalent characterizations of the submultialgebras of multialgebras of terms are given, which lead to the notion of weakly free multialgebras. In Section~\ref{UMP} we apply one of the five characterizations obtained in Section~\ref{Characterizations} to offer a simple proof of the well-known result which states that the category $\textbf{MAlg}(\Sigma)$ of multialgebras does not have free objects. Finally, some conclusions are provided in Section~\ref{conclusions}.

\section{Multialgebras of non-deterministic terms} \label{Definition}

This section introduces the first main notion proposed in the paper:  multialgebras of (non-deterministic) terms. As we shall see, a generalization to the category of multialgebras of the concept of algebra of terms is attained by means of a family of multialgebras of terms indexed by all the cardinals $\kappa > 0$, instead of considering a single object. This reveals the complexity  required by adapting the notion of free objects to the category   of multialgebras:  all the possible sizes for the outputs of the multioperators, assuming that the outputs consist of sets of terms instead of terms,  should be considered. In this sense, $\kappa$ represents the maximum of such sizes in a given multialgebra of terms.
Before introducing the definition itself, some standard notions will be recalled.

A {\em signature} is a collection $\Sigma=\{\Sigma_{n}\}_{n\in \mathbb{N}}$ of (possibly empty) sets $\Sigma_{n}$. Elements of $\Sigma_n$ are {\em functional symbols} of arity $n$. We will denote by $\Sigma$ either the collection itself or, when there is no risk of confusion, the set $\bigcup_{n\in\mathbb{N}}\Sigma_{n}$.

A {\em $\Sigma$-multialgebra}, or {\em multialgebra}, is a pair $\mathcal{A}=(A, \{\sigma_{\mathcal{A}}\}_{\sigma\in\Sigma})$, where $A$ is a non-empty set (the {\em universe} of $\mathcal{A}$) and $\{\sigma_{\mathcal{A}}\}_{\sigma\in\Sigma}$ is a collection of functions indexed by $\bigcup_{n\in\mathbb{N}}\Sigma_{n}$ such that, if $\sigma\in\Sigma_{n}$, $\sigma_{\mathcal{A}}$ is a function of the form
\begin{equation*}\sigma_{\mathcal{A}}:A^{n}\rightarrow\mathcal{P}(A)\setminus\{\emptyset\},\end{equation*}
that is, an $n-$ary function from $A$ to the set of non-empty subsets of $A$.

A {\em homomorphism} between two $\Sigma$-multialgebras $\mathcal{A}=(A, \{\sigma_{\mathcal{A}}\}_{\sigma\in\Sigma})$ and $\mathcal{B}=(B, \{\sigma_{\mathcal{B}}\}_{\sigma\in\Sigma})$ is a function $f:A\rightarrow B$ such that, for all $n\in\mathbb{N}$, $\sigma\in\Sigma_{n}$ and elements $a_{1}, \ldots  , a_{n}\in A$,
\begin{equation*}\{f(a)\ : \  a\in \sigma_{\mathcal{A}}(a_{1}, \ldots  , a_{n})\}\subseteq \sigma_{\mathcal{B}}(f(a_{1}), \ldots  , f(a_{n})).\end{equation*}
When  in the previous relation we replace inclusion by equality, we say that $f$ is a {\em full homomorphism}. To denote that the function $f$ is a homomorphism from $\mathcal{A}$ to $\mathcal{B}$, we write $f:\mathcal{A}\rightarrow \mathcal{B}$. If the homomorphism $f:\mathcal{A}\rightarrow \mathcal{B}$ is injective, we call it a {\em monomorphism} and, if it is surjective, we call it an {\em epimorphism}. A bijective full homomorphism will be called an {\em isomorphism}.  

The class of all $\Sigma$-multialgebras, equipped with the homomorphisms between them (where composition and identity homomorphism are as in the category of sets), becomes the category $\textbf{MAlg}(\Sigma)$. In this category, the epics are precisely the epimorphisms, while any monomorphism is a monic. In turn, isomorphisms, as defined above, are exactly the isomorphisms in the categorical sense (see, for instance, \cite{CFG}, Section~2). Notice, however, that is not known whether all monics are monomorphisms. Any standard $\Sigma$-algebra can be seen as a $\Sigma$-multialgebra in which the operators return singletons. It is easy to see that the category of $\Sigma$-algebras is a full subcategory of $\textbf{MAlg}(\Sigma)$.

Given two $\Sigma$-multialgebras $\mathcal{A}=(A, \{\sigma_{\mathcal{A}}\}_{\sigma\in\Sigma})$ and $\mathcal{B}=(B, \{\sigma_{\mathcal{B}}\}_{\sigma\in\Sigma})$ such that $B\subseteq A$, we say $\mathcal{B}$ is a {\em submultialgebra} of $\mathcal{A}$ if the identity function $id:B\rightarrow A$ is a homomorphism from $\mathcal{B}$ to $\mathcal{A}$ (being therefore a monic). That is, for every $b_{1}, \ldots  , b_{n} \in B$,
\begin{equation*}\sigma_{\mathcal{B}}(b_{1}, \ldots  , b_{n})\subseteq \sigma_{\mathcal{A}}(b_{1}, \ldots  , b_{n}).\end{equation*}

Given a set $\mathcal{V}$ of  variables and a signature $\Sigma=\{\Sigma_{n}\}_{n\in\mathbb{N}}$, the algebra of terms generated by $\mathcal{V}$ over $\Sigma$ will be denoted by $\textbf{T}(\Sigma, \mathcal{V})$, and its universe will be denoted by $T(\Sigma, \mathcal{V})$. The set $T(\Sigma, \mathcal{V})$ is the smallest subset $X$ of the set of finite, non-empty sequences over $\mathcal{V} \cup \bigcup_{n\in\mathbb{N}}\Sigma_{n}$ such that:

\begin{enumerate}
\item $\mathcal{V} \cup \Sigma_0 \subseteq X$;

\item $\sigma\alpha_{1}\ldots \alpha_{n} \in X$, whenever $n \geq 1$, $\sigma\in\Sigma_{n}$ and $\alpha_{1}, \ldots  , \alpha_{n}$ in $X$.
\end{enumerate}

The set $T(\Sigma, \mathcal{V})$ becomes the $\Sigma$-algebra $\textbf{T}(\Sigma, \mathcal{V})$ when we define, for any $\sigma\in\Sigma_{n}$ and terms $\alpha_{1}, \ldots  , \alpha_{n}$ in $T(\Sigma, \mathcal{V})$, 
\begin{equation*}\sigma_{\textbf{T}(\Sigma, \mathcal{V})}(\alpha_{1}, \ldots  , \alpha_{n})=\sigma\alpha_{1}\ldots \alpha_{n}.\end{equation*}

We define the order (or complexity) $\sf o(\alpha)$ of a term $\alpha$ of $\textbf{T}(\Sigma, \mathcal{V})$ as: $\sf o(\alpha)=0$ if $\alpha\in \mathcal{V} \cup \Sigma_{0}$; and $\sf o(\sigma\alpha_{1} \ldots  \alpha_{n})=1+\max\{\sf o(\alpha_{1}), \ldots  , \sf o(\alpha_{n})\}$.

\begin{definition}
Given a signature $\Sigma$ and a cardinal $\kappa>0$, the {\it expanded signature} $\Sigma^{\kappa}=\{\Sigma_{n}^{\kappa}\}_{n\in\mathbb{N}}$ is the signature such that $\Sigma_{n}^{\kappa}=\Sigma_{n}\times \kappa$, where we will denote the pair $(\sigma, \beta)$ by $\sigma^{\beta}$ for $\sigma\in\Sigma$ and $\beta\in\kappa$. 
\end{definition}

We demand that $\kappa$ is greater than zero, which guarantees that, if $\Sigma$ is non-empty, so is $\Sigma^{\kappa}$.

\begin{definition} \label{free-mterms}
Given a set of  variables $\mathcal{V}$, a signature $\Sigma$ and a cardinal $\kappa>0$, we define the {\em $\kappa$-branching $\Sigma$-multialgebra of non-deterministic terms}, or simply $\kappa$-branching multialgebra of terms, when $\Sigma$ is obvious from the context, as
\begin{equation*}\textbf{mT}(\Sigma, \mathcal{V}, \kappa)=(T(\Sigma^{\kappa}, \mathcal{V}), \{\sigma_{\textbf{mT}(\Sigma, \mathcal{V}, \kappa)}\}_{\sigma\in\Sigma}),\end{equation*}
with universe $T(\Sigma^{\kappa}, \mathcal{V})$ and such that, for $\sigma\in \Sigma_{n}$ and $\alpha_{1}, \ldots  , \alpha_{n}\in T(\Sigma^{\kappa}, \mathcal{V})$,
\begin{equation*}\sigma_{\textbf{mT}(\Sigma, \mathcal{V}, \kappa)}(\alpha_{1}, \ldots  , \alpha_{n})=\{\sigma^{\beta}\alpha_{1} \ldots   \alpha_{n} \ : \  \beta\in\kappa\}.\end{equation*}
Let $\mT(\Sigma, \mathcal{V})=(\textbf{mT}(\Sigma, \mathcal{V}, \kappa))_{\kappa\geq 1}$ be the family of such multialgebras of terms.
\end{definition}

The intuition behind this definition is that connecting given terms $\alpha_{1}, \ldots,\alpha_{n}$ with a functional symbol $\sigma$ can, in a broader interpretation taking into account non-determinism, return many terms with the same general shape, namely $\sigma\alpha_{1} \ldots \alpha_{n}$. All of such terms are constructed with  functional symbols $\sigma^{\beta}$, and the collection of  them (for $\beta \in \kappa$) corresponds to the non-deterministic term generated from the given input.

In the general case, not all functional symbols should return the same number  $\kappa$ of generalized terms. Because of this, the submultialgebras of $\textbf{mT}(\Sigma, \mathcal{V}, \kappa)$ will be considered, where the cardinality of the outputs will vary as long as it is bounded by $\kappa$. 
Here, we will restrict ourselves to the cases where $\Sigma_{0}\neq\emptyset$ or $\mathcal{V}\neq\emptyset$, so that $\textbf{mT}(\Sigma, \mathcal{V}, \kappa)$ is always well defined. 

The order of an element $\alpha$ of $\textbf{mT}(\Sigma, \mathcal{V}, \kappa)$ is, by definition, its order as an element of $T(\Sigma^{\kappa}, \mathcal{V})$. Notice that, if 
\begin{equation*}\sigma_{\textbf{mT}(\Sigma, \mathcal{V}, \kappa)}(\alpha_{1}, \ldots  , \alpha_{n})\cap \theta_{\textbf{mT}(\Sigma, \mathcal{V}, \kappa)}(\beta_{1}, \ldots   ,\beta_{m})\neq\emptyset,\end{equation*}
then $\sigma=\theta$, $n=m$ and $\alpha_{1}=\beta_{1}$, \ldots   , $\alpha_{n}=\beta_{m}$, since if the intersection is not empty there are $\beta, \gamma\in\kappa$ such that $\sigma^{\beta}\alpha_{1} \ldots  \alpha_{n}= \theta^{\gamma}\beta_{1} \ldots \beta_{m}$ and by the structure of $T(\Sigma^{\kappa}, \mathcal{V})$ we find that $\sigma^{\beta}=\theta^{\gamma}$. 

\begin{example}
The $\Sigma$-algebras of terms $\textbf{T}(\Sigma, \mathcal{V})$, when considered as multialgebras such that $\sigma_{\textbf{T}(\Sigma, \mathcal{V})}(\alpha_{1}, \ldots  , \alpha_{n})=\{\sigma\alpha_{1} \ldots \alpha_{n}\}$, are multialgebras of terms, with $\kappa=1$. That is, $\textbf{T}(\Sigma, \mathcal{V})$ and $\textbf{mT}(\Sigma, \mathcal{V}, 1)$ are isomorphic.
\end{example}

From now on, the cardinal of a set $X$ will be denoted by $|X|$.

\begin{example}\label{s}
A {\em directed graph} is a pair $(V, A)$, with $V$ a non-empty set of elements called {\em vertices} and $A\subseteq V^{2}$ a set of elements called {\em arrows}. We say that there is an arrow from $u$ to $v$, both in $V$, if $(u,v)\in A$. We say that the $n$-tuple $(v_{1}, \ldots  , v_{n})$ is a {\em path} between $u$ and $v$ if $u=v_{1}$, $v=v_{n}$ and $(v_{i}, v_{i+1})\in A$ for every $i\in\{1, \ldots  , n-1\}$. We say that $u\in V$ has a {\em successor} if there exists $v\in V$ such that $(u,v)\in A$, and $u$ has a {\em predecessor} if there exists $v\in V$ such that $(v,u)\in A$.

A directed graph $F=(V, A)$ is a {\em forest} if, for any two $u, v\in V$, there exists at most one path between $u$ and $v$, and a forest is said to have {\em height} $\omega$ if every vertex has a successor. We state that forests of height $\omega$ are in bijection with the submultialgebras of the multialgebras of terms over the signature $\Sigma_{s}$ with exactly one operator $s$ of arity $1$.

Indeed, take as $\mathcal{V}$ the set of elements of $F$ that have no predecessor and define, for $u\in V$,
\begin{equation*}s_{\mathcal{A}}(u)=\{v\in V \ : \ (u,v)\in A\}.\end{equation*}
It is easy to see that the $\Sigma_{s}$-multialgebra $\mathcal{A}=(V, \{s_{\mathcal{A}}\})$, submultialgebra of $\textbf{mT}(\Sigma_{s}, \mathcal{V}, |V|)$, carries the same information that $F$.
\end{example}

\begin{example}
More generally, a {\em directed multi-graph} \cite{ConiglioSernadas}, or {\em directed $m$-graph}, is a pair $(V, A)$ with $V$ a non-empty set of vertices and $A$ a subset of $V^{+}\times V$, where $V^{+}=\bigcup_{n\in\mathbb{N}\setminus\{0\}}V^{n}$ is the set of finite, non-empty, sequences over $V$. We will say that $(v_{1}, \ldots  , v_{n})$ is a {\em path} between $u$ and $v$ if $u=v_{1}$, $v=v_{n}$ and, for every $i\in\{1, \ldots  , n-1\}$, there exists $v_{i_{1}}, \ldots  , v_{i_{m}}$ such that $((v_{i_{1}}, \ldots  , v_{i_{m}}), v_{i+1})$, with $v_{i}=v_{i_{j}}$ for some $j\in\{1, \ldots  , m\}$.

An {\em $m$-forest} is a directed $m$-graph such that any two elements are connected by at most one path, and an $m$-forest is said to have {\em $n$-height} $\omega$, for $n\in\mathbb{N}\setminus\{0\}$, if, for any $(u_{1}, \ldots  , u_{n})\in V^{n}$, there exists $v\in V$ such that $((u_{1}, \ldots  , u_{n}), v)\in A$. Finally, we see that every $m$-forest $F=(V, A)$ with $n$-height $\omega$, for every $n\in S\subseteq\mathbb{N}\setminus\{\emptyset\}$, is essentially equivalent to the $\Sigma_{S}$-multialgebra $\mathcal{A}=(V, \{\sigma_{\mathcal{A}}\}_{\sigma\in\Sigma_{S}})$, with
\begin{equation*}\sigma_{\mathcal{A}}(u_{1}, \ldots  , u_{n})=\{v\in V \ : \ ((u_{1}, \ldots  , u_{n}), v)\in A\},\end{equation*}
for $\sigma$ of arity $n$, and $\Sigma_{S}$ the signature with exactly one operator of arity $n$, for every $n\in S$. It is not hard to see that $\mathcal{A}$ is a submultialgebra of $\textbf{mT}(\Sigma_{S}, \mathcal{V}, |V|)$, with $\mathcal{V}$ the set of elements $v$ of $V$ such that, for no $(u_{1}, \ldots  , u_{n})\in V^{+}$, $((u_{1}, \ldots  , u_{n}), v)\in A$.
\end{example}

\section{Being a submultialgebra of $\textbf{mT}(\Sigma, \mathcal{V}, \kappa)$ as\ldots }\label{Characterizations}

The class of submultialgebras of the members of $\mT(\Sigma, \mathcal{V})=(\textbf{mT}(\Sigma, \mathcal{V}, \kappa))_{\kappa\geq 1}$ is proposed to be the generalization of the free $\Sigma$-algebras to the category  $\textbf{MAlg}(\Sigma)$ of multialgebras. Because of this, the next step is to characterize  the submultialgebras of  $\textbf{mT}(\Sigma, \mathcal{V}, \kappa)$. In this section, five different characterizations of such multialgebras will be found, proving that all of them are equivalent (see Theorem~\ref{charfreemulti}). From this we arrive to the second main notion proposed in this paper: weakly free multialgebras over $\Sigma$.

\subsection{\ldots  being $\textbf{cdf}$-generated}\label{cdf-generated}
 
In universal algebra, the algebras of terms $\textbf{T}(\Sigma, \mathcal{V})$ have the universal mapping property for the class of all $\Sigma$-algebras over $\mathcal{V}$. This means that there exists a set, in their case the set of variables $\mathcal{V}$, such that, for every other $\Sigma$-algebra $\mathcal{B}$ with universe $B$ and function $f:\mathcal{V}\rightarrow B$, there exists a unique homomorphism $\overline{f}:\textbf{T}(\Sigma, \mathcal{V})\rightarrow\mathcal{B}$ extending $f$. As we mentioned before, this is no longer true when dealing with multialgebras, but we can define a closely related concept with the aid of what we will call {\em collections of choices}.

Collections of choices are motivated by the notion of {\em legal valuations}, first defined in Avron and Lev's seminal paper~\cite{AvronLev} in the context of non-deterministic logical matrices. A map $\nu$ from $\textbf{T}(\Sigma, \mathcal{V})$ (seen as the algebra of propositional formulas over $\Sigma$ generated by $\mathcal{V}$) to the universe of a $\Sigma$-multialgebra $\mathcal{A}$ is a legal valuation whenever $\nu(\sigma\alpha_{1} \ldots  \alpha_{n})\in \sigma_{\mathcal{A}}(\nu(\alpha_{1}), \ldots  , \nu(\alpha_{n}))$, for any connective $\sigma$ in $\Sigma$. Essentially, for any formula $\sigma\alpha_{1} \ldots  \alpha_{n}$, $\nu$ ``chooses'' a value from all the possible values $\sigma_{\mathcal{A}}(\nu(\alpha_{1}), \ldots  , \nu(\alpha_{n}))$, possible values which depend themselves on the previous choices $\nu(\alpha_{1})$, \ldots, $\nu(\alpha_{n})$ performed by $\nu$.

A collection of choices automatizes all these aforementioned choices, what justifies its name. 

\begin{definition}
Given multialgebras $\mathcal{A}=(A, \{\sigma_{\mathcal{A}}\}_{\sigma\in\Sigma})$ and $\mathcal{B}=(B, \{\sigma_{\mathcal{B}}\}_{\sigma\in\Sigma})$ over the signature $\Sigma$, a {\it collection of choices} from $\mathcal{A}$ to $\mathcal{B}$ is a collection $C=\{C_{n}\}_{n\in\mathbb{N}}$ of collections of functions
\begin{equation*}C_{n}=\big\{C\sigma_{a_{1}, \ldots  , a_{n}}^{b_{1}, \ldots  , b_{n}} \ : \ \sigma\in\Sigma_{n}, \, a_{1}, \ldots  , a_{n}\in A, \, b_{1}, \ldots   ,b_{n}\in B\big\}\end{equation*}
such that, for $\sigma\in\Sigma_{n}$, $a_{1}, \ldots  , a_{n}\in A$ and $b_{1}, \ldots  , b_{n}\in B$, $C\sigma_{a_{1}, \ldots  , a_{n}}^{b_{1}, \ldots  , b_{n}}$ is a function of the form 
\begin{equation*}C\sigma_{a_{1}, \ldots  , a_{n}}^{b_{1}, \ldots  , b_{n}}:\sigma_{\mathcal{A}}(a_{1}, \ldots   ,a_{n})\rightarrow\sigma_{\mathcal{B}}(b_{1}, \ldots  , b_{n}).\end{equation*}
\end{definition}

\begin{example}
If $\mathcal{B}$ is actually an algebra, that is, all its operations return singletons, there exists only one collection of choices from any $\mathcal{A}$ to $\mathcal{B}$. This means that in the classical environment of universal algebras, collections of choices are somewhat irrelevant.
\end{example}

\begin{example}
A directed tree is a directed forest where there exists exactly one element without predecessor. We say that $v$ ramifies from $u$ if there exists an arrow from $u$ to $v$. Then, for a collection of choices $C$ from $T_{1}$ to $T_{2}$ ($T_{1}=(V_{1}, A_{1})$ and $T_{2}=(V_{2}, A_{2})$ are directed trees of height $\omega$, considered as $\Sigma_{s}$-multialgebras) and for every $v\in V_{1}$ and $u\in V_{2}$, the function $Cs_{v}^{u}$ chooses, for each of the elements that ramify from $v$, one element that ramifies from $u$.
\end{example}

\begin{definition}
Given a signature $\Sigma$, a $\Sigma$-multialgebra $\mathcal{A}=(A, \{\sigma_{\mathcal{A}}\}_{\sigma\in\Sigma})$ is {\it choice-dependent freely generated} by $X$ if $X\subseteq A$ and, for all $\Sigma$-multialgebras $\mathcal{B}=(B, \{\sigma_{\mathcal{B}}\}_{\sigma\in\Sigma})$, all functions $f:X\rightarrow B$ and all collections of choices $C$ from $\mathcal{A}$ to $\mathcal{B}$, there is a unique homomorphism $f_{C}:\mathcal{A}\rightarrow \mathcal{B}$ such that:

\begin{enumerate}
\item $f_{C}|_{X}=f$;

\item for all $\sigma\in \Sigma_{n}$ and $a_{1}, \ldots  , a_{n}\in A$, 
\begin{equation*}f_{C}|_{\sigma_{\mathcal{A}}(a_{1}, \ldots  , a_{n})}=C\sigma_{a_{1}, \ldots  , a_{n}}^{f_{C}(a_{1}), \ldots  , f_{C}(a_{n})}.\end{equation*}
\end{enumerate}
\end{definition}

For simplicity, when $\mathcal{A}$ is choice-dependent freely generated by $X$, we will write that $\mathcal{A}$ is $\textbf{cdf}$-generated by $X$.

In the next definition, we introduce the concept of ground to indicate what elements of a multialgebra are not ``achieved'' by its multioperations. To better visualize this definition one can keep in mind that the ground of an algebra of terms is its set of indecomposable terms, that is, variables.

\begin{definition}
Given a $\Sigma$-multialgebra $\mathcal{A}=(A, \{\sigma_{\mathcal{A}}\}_{\sigma\in\Sigma})$, we define its {\it build} as
\begin{equation*}B(\mathcal{A})=\bigcup \big\{\sigma_{\mathcal{A}}(a_{1}, \ldots  , a_{n}) \ : \  n\in\mathbb{N}, \, \sigma\in\Sigma_{n}, \, a_{1}, \ldots  , a_{n}\in A \big\}.\end{equation*}
We define the {\it ground} of $\mathcal{A}$ as
\begin{equation*}G(\mathcal{A})=A\setminus B(\mathcal{A}).\end{equation*}
\end{definition}

\begin{example}\label{ground of formulas}
$B(\textbf{T}(\Sigma, \mathcal{V}))=T(\Sigma, \mathcal{V})\setminus\mathcal{V}$ and $G(\textbf{T}(\Sigma, \mathcal{V}))=\mathcal{V}$.
\end{example}

\begin{example}
If $F=(V, A)$ is a directed forest of height $\omega$, thought as a $\Sigma_{s}$-multialgebra, its ground is the set of elements $v$ in $V$ without predecessors.
\end{example}

\begin{prop}
\begin{enumerate}
\item If $f:\mathcal{A}\rightarrow\mathcal{B}$ is a homomorphism between $\Sigma$-multialgebras, then $B(\mathcal{A})\subseteq f^{-1}(B(\mathcal{B}))$ and $f^{-1}(G(\mathcal{B}))\subseteq G(\mathcal{A})$;

\item If $\mathcal{B}$ is a submultialgebra of $\mathcal{A}$, $B(\mathcal{B})\subseteq B(\mathcal{A})$ and $G(\mathcal{A})\subseteq G(\mathcal{B})$.
\end{enumerate}
\end{prop}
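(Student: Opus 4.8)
The plan is to prove item~1 first and then obtain item~2 as the special case of the identity inclusion. For the build inclusion $B(\mathcal{A})\subseteq f^{-1}(B(\mathcal{B}))$ of item~1, I would argue by a direct membership chase: given $c\in B(\mathcal{A})$, the definition of the build furnishes $\sigma\in\Sigma_{n}$ and $a_{1},\ldots,a_{n}\in A$ with $c\in\sigma_{\mathcal{A}}(a_{1},\ldots,a_{n})$, and the defining inclusion of a homomorphism yields $f(c)\in\sigma_{\mathcal{B}}(f(a_{1}),\ldots,f(a_{n}))$. Since this last set is one of those whose union is $B(\mathcal{B})$, we obtain $f(c)\in B(\mathcal{B})$, that is, $c\in f^{-1}(B(\mathcal{B}))$.

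For the ground inclusion $f^{-1}(G(\mathcal{B}))\subseteq G(\mathcal{A})$ I would avoid a direct argument and instead exploit that build and ground are complementary. Because $f$ maps into $B$ we have $f^{-1}(B)=A$, so preimages commute with the relevant complement and $f^{-1}(G(\mathcal{B}))=f^{-1}(B\setminus B(\mathcal{B}))=A\setminus f^{-1}(B(\mathcal{B}))$. Taking complements in $A$ of the build inclusion just established then gives $A\setminus f^{-1}(B(\mathcal{B}))\subseteq A\setminus B(\mathcal{A})=G(\mathcal{A})$, which is exactly the desired inclusion. In other words, the two halves of item~1 are formally complements of one another, and only the first genuinely uses the homomorphism hypothesis.

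For item~2 I would note that $\mathcal{B}$ being a submultialgebra of $\mathcal{A}$ is precisely the statement that $id\colon B\to A$ is a homomorphism $\mathcal{B}\to\mathcal{A}$; feeding $f=id$ into item~1 and using $id^{-1}(S)=S\cap B$ gives $B(\mathcal{B})\subseteq B\cap B(\mathcal{A})\subseteq B(\mathcal{A})$ for the build, and $G(\mathcal{A})\cap B\subseteq G(\mathcal{B})$ for the ground. The build inclusion can also be reproved standalone, directly from the submultialgebra condition $\sigma_{\mathcal{B}}(b_{1},\ldots,b_{n})\subseteq\sigma_{\mathcal{A}}(b_{1},\ldots,b_{n})$.

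The step I expect to require the most care is the ground inclusion of item~2. The complementation argument delivers only the restricted form $G(\mathcal{A})\cap B\subseteq G(\mathcal{B})$, and passing from this to the unrestricted $G(\mathcal{A})\subseteq G(\mathcal{B})$ needs $G(\mathcal{A})\subseteq B$: an element of $A\setminus B$ lying outside $B(\mathcal{A})$ belongs to $G(\mathcal{A})$ yet cannot belong to $G(\mathcal{B})\subseteq B$. I would therefore either verify (or assume in the ambient context) that the ground of $\mathcal{A}$ survives into $B$, or present the robust inclusion $G(\mathcal{A})\cap B\subseteq G(\mathcal{B})$. This is the one place where the naive element chase breaks down, since there is no reason a ground element of $\mathcal{A}$ should lie in the smaller universe $B$ at all, which is exactly why the complementation viewpoint is what keeps the ground statements clean.
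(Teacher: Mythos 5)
Your treatment of item~1 is exactly the paper's: a direct membership chase through the homomorphism condition gives $B(\mathcal{A})\subseteq f^{-1}(B(\mathcal{B}))$, and the ground inclusion is then obtained by complementation using $G(\mathcal{A})=A\setminus B(\mathcal{A})$ and $f^{-1}(B)=A$. For item~2 you specialize item~1 to $f=id$, whereas the paper redoes the element chase directly from $\sigma_{\mathcal{B}}(b_{1},\ldots,b_{n})\subseteq\sigma_{\mathcal{A}}(b_{1},\ldots,b_{n})$; these are the same computation, so the routes do not meaningfully differ there. The valuable part of your proposal is the caveat at the end, and you are right to flag it: complementation only yields $G(\mathcal{A})\cap B\subseteq G(\mathcal{B})$, and the unrestricted inclusion $G(\mathcal{A})\subseteq G(\mathcal{B})$ asserted in item~2 would additionally require $G(\mathcal{A})\subseteq B$, which need not hold. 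Indeed the literal statement fails: over the signature $\Sigma_{s}$ with one unary symbol, take $\mathcal{A}$ with universe $\{0,1\}$ and $s_{\mathcal{A}}(0)=s_{\mathcal{A}}(1)=\{1\}$ (the multialgebra of Example~\ref{B}), and let $\mathcal{B}$ be its submultialgebra on $\{1\}$ with $s_{\mathcal{B}}(1)=\{1\}$; then $G(\mathcal{A})=\{0\}$ while $G(\mathcal{B})=\emptyset$. The paper's proof passes over this point with the phrase ``using again that $G(\mathcal{A})=A\setminus B(\mathcal{A})$ we finish the proof,'' but every subsequent use of item~2 in the text (for instance in the proof of Theorem~\ref{cdf-gen is iso to sub mT} and of Lemma~\ref{X in ground if cdf-gen by X}) invokes only the restricted form $G(\mathcal{A})\cap B\subseteq G(\mathcal{B})$, which is precisely what you prove. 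So your argument is correct, and the restricted statement you isolate is the one that should be, and effectively is, used.
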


\begin{proof}
\begin{enumerate}
\item If $a\in B(\mathcal{A})$, there exist $\sigma\in\Sigma_{n}$ and $a_{1}, \ldots  , a_{n}\in A$ such that $a\in\sigma_{\mathcal{A}}(a_{1}, \ldots  , a_{n})$. Since
$f(\sigma_{\mathcal{A}}(a_{1}, \ldots  , a_{n})) \subseteq \sigma_{\mathcal{B}}(f(a_{1}), \ldots   ,f(a_{n}))$,
we find that $f(a)\in \sigma_{\mathcal{B}}(f(a_{1}), \ldots  , f(a_{n}))$ and therefore $f(a)\in B(\mathcal{B})$, meaning that $a\in f^{-1}(B(\mathcal{B}))$. Using that $G(\mathcal{A})=A\setminus B(\mathcal{A})$ we obtain the second mentioned inclusion.

\item If $b\in B(\mathcal{B})$, there exist $\sigma\in\Sigma_{n}$ and $b_{1}, \ldots  , b_{n}\in B$ such that\\ $b\in \sigma_{\mathcal{B}}(b_{1}, \ldots  , b_{n})$, and given that $\sigma_{\mathcal{B}}(b_{1}, \ldots  , b_{n})\subseteq \sigma_{\mathcal{A}}(b_{1}, \ldots  , b_{n})$ we obtain $b\in B(\mathcal{A})$. Using again that $G(\mathcal{A})=A\setminus B(\mathcal{A})$ we finish the proof. \qedhere
\end{enumerate}
\end{proof}

From this it also follows that if $f:\mathcal{A}\rightarrow\mathcal{B}$ is a homomorphism, $G(\mathcal{B})\cap f(A)$ is contained in $\{f(a)\ : \  a\in G(\mathcal{A})\}$. Indeed,  if $b$ is in $G(\mathcal{B})\cap f(A)$, any $a\in A$ such that $f(a)=b$ is in $f^{-1}(G(\mathcal{B}))$ and, by the previous proposition, is also in $G(\mathcal{A})$. And therefore $b$ is in $\{f(a) \ : \  a\in G(\mathcal{A})\}$.

Generalizing Example \ref{ground of formulas}, we have that $G(\textbf{mT}(\Sigma, \mathcal{V}, \kappa))=\mathcal{V}$, or equivalently $B(\textbf{mT}(\Sigma, \mathcal{V}, \kappa))=T(\Sigma^{\kappa}, \mathcal{V})\setminus \mathcal{V}$, what we show by induction. If $\alpha$ is of order $0$, either we have $\alpha=\sigma^{\beta}$, for a $\sigma\in\Sigma_{0}$ and $\beta\in\kappa$, and therefore $\alpha\in B(\textbf{mT}(\Sigma, \mathcal{V}, \kappa))$; or we have that $\alpha=p\in \mathcal{V}$. In that last case, if there exist $\sigma\in\Sigma_{m}$ and $\alpha_{1}, \ldots  , \alpha_{m}\in T(\Sigma^{\kappa}, \mathcal{V})$ such that 
\begin{equation*}p\in \sigma_{\textbf{mT}(\Sigma, \mathcal{V}, \kappa)}(\alpha_{1}, \ldots  , \alpha_{m}),\end{equation*}
we have $p=\sigma^{\beta}\alpha_{1} \ldots  \alpha_{m}$ for $\beta\in\kappa$, which is absurd given the structure of $T(\Sigma^{\kappa}, \mathcal{V})$, forcing us to conclude that $p\notin B(\textbf{mT}(\Sigma, \mathcal{V}, \kappa))$. If $\alpha$ is of order $n$, we have that $\alpha=\sigma^{\beta}\alpha_{1} \ldots \alpha_{m}$ for $\sigma\in\Sigma_{m}$, $\beta\in\kappa$ and $\alpha_{1}, \ldots  , \alpha_{m}$ of order at most $n-1$, and therefore we have $\alpha$ in $\sigma_{\textbf{mT}(\Sigma, \mathcal{V}, \kappa)}(\alpha_{1}, \ldots  , \alpha_{m})$, meaning that $\alpha\in B(\textbf{mT}(\Sigma, \mathcal{V}, \kappa))$.

\begin{definition}
Given a $\Sigma$-multialgebra $\mathcal{A}=(A, \{\sigma_{\mathcal{A}}\}_{\sigma\in\Sigma})$ and a set $S\subseteq A$, we define the sets $\langle S\rangle_{m}$ by induction: $\langle S\rangle _{0}=S\cup\bigcup_{\sigma\in \Sigma_{0}}\sigma_{\mathcal{A}}$; and assuming we have defined $\langle S\rangle_{m}$, we define
\[\langle S\rangle_{m+1}=\langle S\rangle_{m}\cup \bigcup \big\{\sigma_{\mathcal{A}}(a_{1}, \ldots  , a_{n}) \ : \  n\in\mathbb{N}, \, \sigma\in\Sigma_{n}, \, a_{1}, \ldots  , a_{n}\in \langle S\rangle_{m} \big\}.\]
The {\it set generated} by $S$, denoted by $\langle S\rangle$, is then defined as $\langle S\rangle=\bigcup_{m\in\mathbb{N}}\langle S\rangle_{m}$.

We say $\mathcal{A}$ is {\it generated} by $S$ if $\langle S\rangle=A$.
\end{definition}

\begin{lemma}\label{sub mT is gen ground}
Every submultialgebra $\mathcal{A}$ of $\textbf{mT}(\Sigma, \mathcal{V}, \kappa)$ is generated by $G(\mathcal{A})$.
\end{lemma}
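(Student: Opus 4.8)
The plan is to prove the two inclusions $\langle G(\mathcal{A})\rangle \subseteq A$ and $A \subseteq \langle G(\mathcal{A})\rangle$ separately, the first being immediate and the second carrying the real content. For the first inclusion, since $G(\mathcal{A}) = A \setminus B(\mathcal{A}) \subseteq A$ and the universe $A$ is closed under every $\sigma_{\mathcal{A}}$ (these being the operations of the multialgebra $\mathcal{A}$ itself), an easy induction on $m$ shows $\langle G(\mathcal{A})\rangle_m \subseteq A$ for all $m$, whence $\langle G(\mathcal{A})\rangle \subseteq A$.

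For the reverse inclusion I would show that every $\alpha \in A$ lies in $\langle G(\mathcal{A})\rangle$, arguing by strong induction on the order ${\sf o}(\alpha)$ of $\alpha$ taken as an element of $T(\Sigma^{\kappa}, \mathcal{V})$. The induction splits along the dichotomy $A = G(\mathcal{A}) \sqcup B(\mathcal{A})$, which holds by the very definition $G(\mathcal{A}) = A \setminus B(\mathcal{A})$. If $\alpha \in G(\mathcal{A})$, then $\alpha \in \langle G(\mathcal{A})\rangle_0 \subseteq \langle G(\mathcal{A})\rangle$ and there is nothing more to do; this case simultaneously covers the base of the induction. If instead $\alpha \in B(\mathcal{A})$, then by definition of the build there are $\sigma \in \Sigma_n$ and $a_1, \ldots, a_n \in A$ with $\alpha \in \sigma_{\mathcal{A}}(a_1, \ldots, a_n)$, and I would treat the nullary and the non-nullary cases apart.

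The key step is the non-nullary case $n \geq 1$. Here I would use that $\mathcal{A}$ is a submultialgebra of $\textbf{mT}(\Sigma, \mathcal{V}, \kappa)$, so that $\sigma_{\mathcal{A}}(a_1, \ldots, a_n) \subseteq \sigma_{\textbf{mT}(\Sigma, \mathcal{V}, \kappa)}(a_1, \ldots, a_n) = \{\sigma^{\beta} a_1 \ldots a_n : \beta \in \kappa\}$, and hence $\alpha = \sigma^{\beta} a_1 \ldots a_n$ for some $\beta \in \kappa$. By the unique readability of the terms of $T(\Sigma^{\kappa}, \mathcal{V})$, the $a_i$ are genuine proper subterms of $\alpha$, so ${\sf o}(a_i) < 1 + \max_j {\sf o}(a_j) = {\sf o}(\alpha)$ for each $i$. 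The induction hypothesis then yields $a_i \in \langle G(\mathcal{A})\rangle$, say $a_i \in \langle G(\mathcal{A})\rangle_{m_i}$; setting $m = \max_i m_i$ we get $a_1, \ldots, a_n \in \langle G(\mathcal{A})\rangle_m$, and therefore $\alpha \in \sigma_{\mathcal{A}}(a_1, \ldots, a_n) \subseteq \langle G(\mathcal{A})\rangle_{m+1} \subseteq \langle G(\mathcal{A})\rangle$. The nullary case $n = 0$ is immediate, since the outputs of nullary operators are by definition already contained in $\langle G(\mathcal{A})\rangle_0$.

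I do not anticipate a serious obstacle: the argument is a well-founded induction whose only delicate points are ensuring that the order strictly decreases and remembering to isolate the nullary symbols. The strict decrease rests precisely on the structural fact recorded just before this lemma, namely that a nonempty intersection of outputs in $\textbf{mT}(\Sigma, \mathcal{V}, \kappa)$ forces equality of the displayed terms (unique readability); this is what guarantees that an element of $B(\mathcal{A})$ produced by a symbol of positive arity decomposes into strictly simpler elements of $A$ and so keeps the induction well-founded.
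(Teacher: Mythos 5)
Your argument is correct and is essentially the paper's proof: the paper phrases it as a minimal-order counterexample leading to a contradiction, while you run the same well-founded induction on term order directly, with the same two key steps (the submultialgebra inclusion into $\sigma_{\textbf{mT}(\Sigma,\mathcal{V},\kappa)}(a_1,\ldots,a_n)$ forcing the arguments to have strictly smaller order, and taking the maximum of the stages $m_i$ to place $\alpha$ in $\langle G(\mathcal{A})\rangle_{m+1}$). The extra inclusion $\langle G(\mathcal{A})\rangle\subseteq A$ and the explicit isolation of the nullary case are harmless refinements of the same route.
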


\begin{proof}
Suppose $a$ is an element of $\mathcal{A}$ not contained in $\langle G(\mathcal{A})\rangle$ of minimum order. Given that $a$ cannot belong to $G(\mathcal{A})\cup\bigcup_{\sigma\in\Sigma_{0}}\sigma_{\mathcal{A}}=\langle G(\mathcal{A})\rangle_{0}$, there exist $n>0$, $\sigma\in\Sigma_{n}$ and $a_{1}, \ldots  , a_{n}\in A$ such that $a\in\sigma_{\mathcal{A}}(a_{1}, \ldots  , a_{n})$.

Since $\sigma_{\mathcal{A}}(a_{1}, \ldots  , a_{n})\subseteq \sigma_{\textbf{mT}(\Sigma, X, \kappa)}(a_{1}, \ldots  , a_{n})$ we derive that $a_{1}, \ldots,a_{n}$ are of smaller order than $a$. By our hypothesis, there must exist $m_{1}, \ldots  , m_{n}$ such that $a_{j}\in \langle G(\mathcal{A})\rangle_{m_{j}}$ for all $j\in\{1, \ldots  , n\}$; taking $m=\max\{m_{1}, \ldots  , m_{n}\}$ one obtains that $a_{1}, \ldots  , a_{n}\in \langle G(\mathcal{A})\rangle_{m}$, and therefore
\begin{equation*}a\in \sigma_{\mathcal{A}}(a_{1}, \ldots  , a_{n})\subseteq \langle G(\mathcal{A})\rangle_{m+1},\end{equation*}
which contradicts our assumption that $a$ is not in $\langle G(\mathcal{A})\rangle$. 
\end{proof}

\begin{theorem}\label{sub mT is cdf-gen ground}
Every submultialgebra $\mathcal{A}$ of $\textbf{mT}(\Sigma, \mathcal{V}, \kappa)$ is $\textbf{cdf}$-generated by $G(\mathcal{A})$.
\end{theorem}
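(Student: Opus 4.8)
The plan is to construct the homomorphism $f_{C}$ by recursion on the order of the elements of $\mathcal{A}$, exploiting the fact that the universe of $\mathcal{A}$ is partitioned disjointly as $A = G(\mathcal{A}) \cup B(\mathcal{A})$ (by the definition of the ground) together with the unique readability of terms recorded right after Definition~\ref{free-mterms}. That $G(\mathcal{A}) \subseteq A$ is immediate, so only the existence and uniqueness of $f_{C}$ have to be established. Fix a $\Sigma$-multialgebra $\mathcal{B}$, a function $f : G(\mathcal{A}) \to B$ and a collection of choices $C$ from $\mathcal{A}$ to $\mathcal{B}$.

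For existence, I would define $f_{C}(a)$ by recursion on the order $\mathsf{o}(a)$. If $a \in G(\mathcal{A})$, set $f_{C}(a) = f(a)$; this covers in particular every ground element of order $0$. If instead $a \in B(\mathcal{A})$, then $a \in \sigma_{\mathcal{A}}(a_{1}, \ldots, a_{n})$ for some $\sigma \in \Sigma_{n}$ and $a_{1}, \ldots, a_{n} \in A$; since $\mathcal{A}$ is a submultialgebra of $\textbf{mT}(\Sigma, \mathcal{V}, \kappa)$ we have $a = \sigma^{\beta} a_{1} \ldots a_{n}$ for some $\beta \in \kappa$, so each $a_{i}$ has order strictly smaller than $a$ and the values $f_{C}(a_{1}), \ldots, f_{C}(a_{n})$ are already available (the list being empty when $n = 0$). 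I then set $f_{C}(a) = C\sigma_{a_{1}, \ldots, a_{n}}^{f_{C}(a_{1}), \ldots, f_{C}(a_{n})}(a)$, which makes sense because the domain of this choice function is $\sigma_{\mathcal{A}}(a_{1}, \ldots, a_{n}) \ni a$ and its codomain is $\sigma_{\mathcal{B}}(f_{C}(a_{1}), \ldots, f_{C}(a_{n})) \subseteq B$.

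Once $f_{C}$ is seen to be well defined, checking that it is a homomorphism satisfying the two required clauses is routine: clause~(1) holds by the base case of the recursion, and for clause~(2) (and hence the homomorphism inclusion) I note that any $a \in \sigma_{\mathcal{A}}(a_{1}, \ldots, a_{n})$ is read uniquely as $\sigma^{\beta} a_{1} \ldots a_{n}$, so its value was defined to be exactly $C\sigma_{a_{1}, \ldots, a_{n}}^{f_{C}(a_{1}), \ldots, f_{C}(a_{n})}(a) \in \sigma_{\mathcal{B}}(f_{C}(a_{1}), \ldots, f_{C}(a_{n}))$. Uniqueness then follows by the mirror-image induction on order: any homomorphism $g$ obeying (1) and (2) must agree with $f$ on $G(\mathcal{A})$, and on a build element $a = \sigma^{\beta} a_{1} \ldots a_{n}$ clause~(2) forces $g(a) = C\sigma_{a_{1}, \ldots, a_{n}}^{g(a_{1}), \ldots, g(a_{n})}(a)$, which coincides with $f_{C}(a)$ once $g(a_{i}) = f_{C}(a_{i})$ is known by the inductive hypothesis.

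The delicate point --- and the only place where the special structure of $\textbf{mT}(\Sigma, \mathcal{V}, \kappa)$ is genuinely used --- is the well-definedness of the recursive clause for build elements. In an arbitrary multialgebra a single element could lie in both $\sigma_{\mathcal{A}}(a_{1}, \ldots, a_{n})$ and $\theta_{\mathcal{A}}(b_{1}, \ldots, b_{m})$ for genuinely different data, and the two choice functions could then prescribe incompatible values, leaving no single-valued assignment. Here this cannot happen: the unique readability observed after Definition~\ref{free-mterms} guarantees that $\sigma_{\mathcal{A}}(a_{1}, \ldots, a_{n}) \cap \theta_{\mathcal{A}}(b_{1}, \ldots, b_{m}) \neq \emptyset$ forces $\sigma = \theta$, $n = m$ and $a_{i} = b_{i}$ throughout, so the witnessing tuple $(\sigma, a_{1}, \ldots, a_{n})$ attached to a build element $a$ is uniquely determined by $a$. (Lemma~\ref{sub mT is gen ground}, guaranteeing that $G(\mathcal{A})$ generates $\mathcal{A}$, underlies the fact that this recursion reaches every element.) This is precisely what rescues the universal-mapping-type property, in its choice-dependent form, for submultialgebras of multialgebras of terms.
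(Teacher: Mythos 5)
Your proof is correct and follows essentially the same strategy as the paper's: define $f_{C}$ recursively via the choice functions, use the disjointness of $G(\mathcal{A})$ and $B(\mathcal{A})$ together with the unique readability of terms in $\textbf{mT}(\Sigma, \mathcal{V}, \kappa)$ to secure well-definedness, and run the mirror induction for uniqueness. The only difference is that you recurse on the order of terms rather than on the generation stages $\langle G(\mathcal{A})\rangle_{m}$, which makes totality of the recursion automatic, whereas the paper obtains it by a separate appeal to Lemma~\ref{sub mT is gen ground}; your parenthetical remark correctly identifies that this is the same work relocated.
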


\begin{proof}
Let $\mathcal{A}=(A, \{\sigma_{\mathcal{A}}\}_{\Sigma})$ be a submultialgebra of $\textbf{mT}(\Sigma, \mathcal{V}, \kappa)$, let $\mathcal{B}=(B, \{\sigma_{\mathcal{B}}\}_{\Sigma})$ be any $\Sigma$-multialgebra, let $f:G(\mathcal{A})\rightarrow B$ be a function and $C$ a collection of choices from $\mathcal{A}$ to $\mathcal{B}$. We define $f_{C}:\mathcal{A}\rightarrow\mathcal{B}$ by induction on $\langle G(\mathcal{A})\rangle_{m}$:

\begin{enumerate}
\item if $a\in \langle G(\mathcal{A})\rangle_{0}$ and $a\in G(\mathcal{A})$, we define $f_{C}(a)=f(a)$;

\item if $a\in \langle G(\mathcal{A})\rangle_{0}$ and $a\in\sigma_{\mathcal{A}}$, for some $\sigma\in\Sigma_{0}$, we define $f_{C}(a)=C\sigma(a)$;

\item if $f_{C}$ is defined for all elements of $\langle G(\mathcal{A})\rangle_{m}$, $a_{1}, \ldots  , a_{n}\in \langle G(\mathcal{A})\rangle_{m}$ and $\sigma\in\Sigma_{n}$, for every element $a\in \sigma_{\mathcal{A}}(a_{1}, \ldots  , a_{n})$ we define 
\begin{equation*}f_{C}(a)=C\sigma_{a_{1}, \ldots  , a_{n}}^{f_{C}(a_{1}), \ldots  , f_{C}(a_{n})}(a).\end{equation*}
\end{enumerate}

First, we must prove that $f_{C}$ is well defined. There are two possibly problematic cases to consider for an element $a\in A$: 
\begin{enumerate}
\item the one in which $a\in G(\mathcal{A})$ and there are $\sigma\in\Sigma_{n}$ and $a_{1}, \ldots  , a_{n}\in A$ with $a\in\sigma_{\mathcal{A}}(a_{1}, \ldots  , a_{n})$, corresponding to $a$ falling simultaneously in the cases $(1)$ and $(2)$, or $(1)$ and $(3)$ of the definition; 
\item and the one where there are $\sigma\in\Sigma_{n}$, $\theta\in\Sigma_{m}$, $a_{1}, \ldots  , a_{n}\in A$ and $b_{1}, \ldots  , b_{m}\in A$ such that $a\in\sigma_{\mathcal{A}}(a_{1}, \ldots  , a_{n})$ and $a\in\theta_{\mathcal{A}}(b_{1}, \ldots  , b_{m})$, a situation that corresponds to the cases $(2)$ and $(3)$, $(2)$ and $(2)$,\footnote{That is, $a\in\langle G(\mathcal{A})\rangle_{0}$, and $a\in\sigma_{\mathcal{A}}$ and $a\in\theta_{\mathcal{A}}$, for different $\sigma, \theta\in \Sigma_{0}$, where defining $f_{C}(a)$ as both $C\sigma(a)$ and $C\theta(a)$ could be impossible.} or $(3)$ and $(3)$\footnote{That is, $f_{C}$ is defined for all of $\langle G(\mathcal{A})\rangle_{k}$, $a_{1}, \ldots  , a_{n}, b_{1}, \ldots  , b_{m}\in\langle G(\mathcal{A})\rangle_{k}$, and $a\in\sigma_{\mathcal{A}}(a_{1}, \ldots  , a_{n})$ and $a\in\theta_{\mathcal{A}}(b_{1}, \ldots  , b_{m})$, for $\sigma\in\Sigma_{n}$ and $\theta\in\Sigma_{m}$, meaning it could be impossible to define $f_{C}(a)$ in a systematic way.} occurring simultaneously.
\end{enumerate}

The first case is not possible, since $G(\mathcal{A})\subseteq A\setminus\sigma_{\mathcal{A}}(a_{1}, \ldots  , a_{n})$ for every $\sigma\in\Sigma_{n}$ and $a_{1}, \ldots  , a_{n}\in A$. In the second case, we find that 
\begin{equation*}a\in \sigma_{\mathcal{A}}(a_{1}, \ldots  , a_{n})\cap\theta_{\mathcal{A}}(b_{1}, \ldots  , b_{m})\subseteq\sigma_{\textbf{mT}(\Sigma, \mathcal{V}, \kappa)}(a_{1}, \ldots  , a_{n})\cap \theta_{\textbf{mT}(\Sigma, \mathcal{V}, \kappa)}( b_{1}, \ldots   , b_{m}),\end{equation*}
so $n=m$, $\sigma=\theta$ and $a_{1}=b_{1}, \ldots  , a_{n}=b_{m}$, and therefore $f_{C}(a)$ is well-defined.

Second, we must prove that $f_{C}$ is defined over all of $A$. That is simple, for $f_{C}$ is defined over all of $\langle G(\mathcal{A})\rangle$ and we established in Lemma~\ref{sub mT is gen ground} that $A=\langle G(\mathcal{A})\rangle$.

So $f_{C}:A\rightarrow B$ is a well-defined function. It remains to be shown that it is a homomorphism. So,  given $\sigma\in\Sigma_{n}$ and $a_{1}, \ldots  , a_{n}$, we see that
\begin{equation*}f_{C}(\sigma_{\mathcal{A}}(a_{1}, \ldots  , a_{n}))=\big\{C\sigma_{a_{1}, \ldots  , a_{n}}^{f_{C}(a_{1}), \ldots  , f_{C}(a_{n})}(a)\ : \  a\in\sigma_{\mathcal{A}}(a_{1}, \ldots  , a_{n})\big\}\subseteq\sigma_{\mathcal{B}}(f_{C}(a_{1}), \ldots  , f_{C}(a_{n})),\end{equation*}
while we also have that $f_{C}$ clearly extends both $f$ and all $C\sigma_{a_{1}, \ldots  , a_{n}}^{f_{C}(a_{1}, \ldots  , f_{C}(a_{n})}$. 

To finish the proof, suppose $g:\mathcal{A}\rightarrow\mathcal{B}$ is another homomorphism extending both $f$ and all $C\sigma_{a_{1}, \ldots  , a_{n}}^{g(a_{1}, \ldots  , g(a_{n})}$. We will prove that $g=f_{C}$ again by induction on the $m$ of $\langle G(\mathcal{A})\rangle_{m}$. For $m=0$, an element $a\in \langle G(\mathcal{A})\rangle_{0}$ is either in $G(\mathcal{A})$, when we have $g(a)=f(a)=f_{C}(a)$, or in $\sigma_{\mathcal{A}}$ for a $\sigma\in\Sigma_{0}$, when $g(a)=C\sigma(a)=f_{C}(a)$.

Suppose $g$ is equal to $f_{C}$ in $\langle G(\mathcal{A})\rangle_{m}$ and take an $a\in \langle G(\mathcal{A})\rangle_{m+1}\setminus \langle G(\mathcal{A})\rangle_{m}$. Then, there exist $\sigma\in\Sigma_{n}$ and $a_{1}, \ldots  , a_{n}\in \langle G(\mathcal{A})\rangle_{m}$ such that $a\in\sigma_{\mathcal{A}}(a_{1}, \ldots  , a_{n})$ and so
\begin{equation*}g(a)=C\sigma_{a_{1}, \ldots  , a_{n}}^{g(a_{1}), \ldots  , g(a_{n})}(a)=C\sigma_{a_{1}, \ldots  , a_{n}}^{f_{C}(a_{1}), \ldots  , f_{C}(a_{n})}(a)=f_{C}(a).\end{equation*}
This proves that $g=f_{C}$ and that, in fact, $f_{C}$ is unique. That is, $\mathcal{A}$ is $\textbf{cdf}$-generated by $G(\mathcal{A})$.
\end{proof}

The proof of the following lemma may be found in Section 2 of \cite{CFG}.

\begin{lemma}\label{image of hom is sub}
Let $\mathcal{A}=(A, \{\sigma_{\mathcal{A}}\}_{\sigma\in\Sigma})$ and $\mathcal{B}=(B, \{\sigma_{\mathcal{B}}\}_{\sigma\in\Sigma})$ be $\Sigma$-multialgebras, and let $f:\mathcal{A}\rightarrow\mathcal{B}$ be a homomorphism. Then, the structure $\mathcal{C}=(f(A), \{\sigma_{\mathcal{C}}\}_{\sigma\in\Sigma})$ such that
\[\sigma_{\mathcal{C}}(c_{1}, \ldots , c_{n})=\bigcup\{f(\sigma_{\mathcal{A}}(a_{1}, \ldots , a_{n})) : f(a_{1})=c_{1}, \ldots , f(a_{n})=c_{n}\}\]
is a $\Sigma$-submultialgebra of $\mathcal{B}$, while $f:\mathcal{A}\rightarrow\mathcal{C}$ is an epimorphism. The $\Sigma$-multialgebra $\mathcal{C}$ is known as the \textit{direct image} of $\mathcal{A}$ trough $f$.
\end{lemma}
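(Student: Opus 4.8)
The plan is to check three things in succession: that $\mathcal{C}$ is a bona fide $\Sigma$-multialgebra (each multioperation returning a non-empty subset of $f(A)$), that it is a submultialgebra of $\mathcal{B}$, and that $f$, regarded as a map into $\mathcal{C}$, is a surjective homomorphism and therefore an epimorphism.

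First I would establish well-definedness. Fixing $\sigma\in\Sigma_{n}$ and $c_{1},\ldots,c_{n}\in f(A)$, I choose any $a_{1},\ldots,a_{n}\in A$ with $f(a_{i})=c_{i}$; since $\mathcal{A}$ is a multialgebra, $\sigma_{\mathcal{A}}(a_{1},\ldots,a_{n})$ is non-empty, so $f(\sigma_{\mathcal{A}}(a_{1},\ldots,a_{n}))$ is a non-empty set occurring in the union that defines $\sigma_{\mathcal{C}}(c_{1},\ldots,c_{n})$, whence the latter is non-empty. As every element of that union has the form $f(a)$, it is contained in $f(A)$, so $\sigma_{\mathcal{C}}$ indeed maps $f(A)^{n}$ into $\mathcal{P}(f(A))\setminus\{\emptyset\}$.

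For the submultialgebra claim I would take an arbitrary $c\in\sigma_{\mathcal{C}}(c_{1},\ldots,c_{n})$, so that $c=f(a)$ for some $a\in\sigma_{\mathcal{A}}(a_{1},\ldots,a_{n})$ with $f(a_{i})=c_{i}$. Because $f:\mathcal{A}\to\mathcal{B}$ is a homomorphism, $f(\sigma_{\mathcal{A}}(a_{1},\ldots,a_{n}))\subseteq\sigma_{\mathcal{B}}(f(a_{1}),\ldots,f(a_{n}))=\sigma_{\mathcal{B}}(c_{1},\ldots,c_{n})$, hence $c\in\sigma_{\mathcal{B}}(c_{1},\ldots,c_{n})$. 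Since $c$ was arbitrary, $\sigma_{\mathcal{C}}(c_{1},\ldots,c_{n})\subseteq\sigma_{\mathcal{B}}(c_{1},\ldots,c_{n})$, which is precisely the condition for $\mathcal{C}$ to be a submultialgebra of $\mathcal{B}$. For the epimorphism, surjectivity onto $f(A)$ is immediate from the definition of the universe of $\mathcal{C}$; and taking the preimage tuple to be $a_{1},\ldots,a_{n}$ itself in the defining union shows $f(\sigma_{\mathcal{A}}(a_{1},\ldots,a_{n}))\subseteq\sigma_{\mathcal{C}}(f(a_{1}),\ldots,f(a_{n}))$, so $f$ is a homomorphism $\mathcal{A}\to\mathcal{C}$ and thus an epimorphism.

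I do not expect a genuine obstacle here; the only point needing a little care is preserving the non-emptiness condition built into the definition of a multialgebra. This is why the union ranges over all preimage tuples: a single $f(\sigma_{\mathcal{A}}(a_{1},\ldots,a_{n}))$ already secures non-emptiness, while collecting all of them is exactly what makes the homomorphism inequality for $f:\mathcal{A}\to\mathcal{C}$ hold.
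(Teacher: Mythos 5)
Your proof is correct and complete: the three checks (non-emptiness and containment in $f(A)$, the inclusion $\sigma_{\mathcal{C}}(c_{1},\ldots,c_{n})\subseteq\sigma_{\mathcal{B}}(c_{1},\ldots,c_{n})$, and the homomorphism inequality for $f:\mathcal{A}\to\mathcal{C}$ via the tuple $(a_{1},\ldots,a_{n})$ itself) are exactly what is needed. The paper does not prove this lemma itself but defers to Section~2 of the cited reference, and your argument is the standard direct verification one would expect to find there.
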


\begin{theorem}\label{cdf-gen is iso to sub mT}
If the multialgebra $\mathcal{A}=(A, \{\sigma_{\mathcal{A}}\}_{\sigma\in\Sigma})$ over $\Sigma$ is $\textbf{cdf}$-generated by $X$, then $\mathcal{A}$ is isomorphic to a submultialgebra of $\textbf{mT}(\Sigma, X, |A|)$ containing $X$.
\end{theorem}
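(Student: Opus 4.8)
The plan is to embed $\mathcal{A}$ into $\textbf{mT}(\Sigma, X, |A|)$ as the direct image of a homomorphism produced by the $\textbf{cdf}$-property, choosing the collection of choices injectively enough that the homomorphism itself is injective. First I would take $\mathcal{B}=\textbf{mT}(\Sigma, X, |A|)$ and let $f:X\to T(\Sigma^{|A|}, X)$ be the inclusion, which is legitimate since $X\subseteq T(\Sigma^{|A|}, X)$. For the collection of choices, observe that each $\sigma_{\mathcal{A}}(a_1,\ldots,a_n)$ is a non-empty subset of $A$ and hence has cardinality at most $|A|$, so I may fix an injection $\iota_{\sigma,a_1,\ldots,a_n}:\sigma_{\mathcal{A}}(a_1,\ldots,a_n)\to|A|$ and set
\[C\sigma_{a_1,\ldots,a_n}^{b_1,\ldots,b_n}(a)=\sigma^{\iota_{\sigma,a_1,\ldots,a_n}(a)}b_1\ldots b_n.\]
As $\sigma_{\mathcal{B}}(b_1,\ldots,b_n)=\{\sigma^{\gamma}b_1\ldots b_n:\gamma\in|A|\}$, this is a genuine collection of choices, and the hypothesis supplies a unique homomorphism $h:=f_C:\mathcal{A}\to\mathcal{B}$ satisfying $h(x)=x$ for $x\in X$ and $h(a)=\sigma^{\iota_{\sigma,a_1,\ldots,a_n}(a)}h(a_1)\ldots h(a_n)$ whenever $a\in\sigma_{\mathcal{A}}(a_1,\ldots,a_n)$.

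Two structural consequences of the hypothesis will drive the argument, and I would record them first. The existence of $h$ forces $X\cap B(\mathcal{A})=\emptyset$: were some $x\in X$ to lie in a set $\sigma_{\mathcal{A}}(a_1,\ldots,a_n)$, the defining conditions would demand both $h(x)=x$, a variable, and $h(x)=\sigma^{\iota(x)}h(a_1)\ldots h(a_n)$, whose leading symbol lies in $\Sigma^{|A|}$, which is impossible. Uniqueness forces $G(\mathcal{A})\subseteq X$: testing the $\textbf{cdf}$-property against the full two-element multialgebra (every operation returning the whole universe) with a collection of choices independent of its superscripts, any element of $G(\mathcal{A})\setminus X$ could be sent to either point without disturbing conditions~(1)--(2) or the homomorphism inclusions, contradicting uniqueness. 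Hence $X=G(\mathcal{A})$, and every $a\in A$ falls into exactly one of two cases: $a\in X$, so that $h(a)=a$ is a variable; or $a\in\sigma_{\mathcal{A}}(a_1,\ldots,a_n)$ for some $\sigma$ and $a_1,\ldots,a_n$, so that $h(a)$ is a term with leading symbol in $\Sigma^{|A|}$.

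The heart of the proof is the injectivity of $h$, which I would establish by induction on the order $\mathsf{o}(t)$ of a value $t=h(a)=h(a')$. If $t$ is a variable, then by the dichotomy neither $a$ nor $a'$ lies in $B(\mathcal{A})$, so $a,a'\in X$ and $a=h(a)=t=h(a')=a'$. Otherwise $t=\sigma^{\gamma}t_1\ldots t_n$; then $a,a'\notin X$, so I may choose decompositions $a\in\tau_{\mathcal{A}}(c_1,\ldots,c_k)$ and $a'\in\rho_{\mathcal{A}}(c'_1,\ldots,c'_l)$ (writing $\vec{c}$ for the tuple $(c_1,\ldots,c_k)$). Condition~(2) gives $h(a)=\tau^{\iota_{\tau,\vec{c}}(a)}h(c_1)\ldots h(c_k)$, and comparing this with $t$ through the unique readability of terms in $T(\Sigma^{|A|}, X)$ noted after Definition~\ref{free-mterms} yields $\tau=\sigma$, $k=n$, $\iota_{\sigma,\vec{c}}(a)=\gamma$ and $h(c_i)=t_i$; symmetrically $\rho=\sigma$, $l=n$, $h(c'_i)=t_i$ and $\iota_{\sigma,\vec{c}'}(a')=\gamma$. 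Each $t_i$ has strictly smaller order, so the induction hypothesis gives $c_i=c'_i$, the two input tuples coincide, and injectivity of the single map $\iota_{\sigma,\vec{c}}$ forces $a=a'$. It is worth stressing that this step needs no prior unique-decomposition property for $\mathcal{A}$: well-definedness of $h$ already guarantees that whatever decomposition one picks yields the same term $t$, and unique readability of $t$ reconstructs a common tuple of inputs.

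Finally I would package the result. By Lemma~\ref{image of hom is sub} the direct image $\mathcal{C}$ of $\mathcal{A}$ through $h$ is a submultialgebra of $\textbf{mT}(\Sigma, X, |A|)$ and $h:\mathcal{A}\to\mathcal{C}$ is an epimorphism; since $h$ is injective, for each tuple in $h(A)^n$ there is a unique preimage tuple, so the union defining $\sigma_{\mathcal{C}}$ collapses to $\sigma_{\mathcal{C}}(h(a_1),\ldots,h(a_n))=h(\sigma_{\mathcal{A}}(a_1,\ldots,a_n))$, showing $h$ is full. A bijective full homomorphism is an isomorphism, whence $\mathcal{A}\cong\mathcal{C}$, and because $h(x)=x$ for all $x\in X$ the universe of $\mathcal{C}$ contains $X$. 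The main obstacle is the injectivity induction of the previous paragraph, supported by the preliminary identification $X=G(\mathcal{A})$ that supplies its governing dichotomy.
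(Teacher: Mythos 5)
Your proof is correct, but it reaches the isomorphism by a genuinely different route from the paper's. The paper, after constructing $f_{C}$ with injective choices exactly as you do, never argues injectivity directly: it passes to the direct image $\mathcal{B}$, observes that $\mathcal{B}$ is a submultialgebra of $\textbf{mT}(\Sigma, X, |A|)$ and hence (by Theorem~\ref{sub mT is cdf-gen ground}) is itself $\textbf{cdf}$-generated by its ground, and uses that to build a reverse homomorphism $g_{D}$ whose collection of choices $D$ partially inverts $C$; the composite $g_{D}\circ f_{C}$ then extends the identity on $X$ and an identity-like collection of choices $E$, so the uniqueness clause forces $g_{D}\circ f_{C}=\mathcal{I}$, and injectivity and fullness both fall out of having a two-sided homomorphic inverse. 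You instead first extract $X=G(\mathcal{A})$ from the $\textbf{cdf}$-property itself (the clash between conditions (1) and (2) for $X\subseteq G(\mathcal{A})$, and the uniqueness test against the full two-element multialgebra with superscript-independent choices for the converse — both arguments check out), and then run a direct induction on term order using unique readability in $T(\Sigma^{|A|},X)$ to get injectivity, with fullness following from the collapse of the union in the direct-image formula. Each approach has its advantages: the paper's avoids all combinatorics on terms but leans on the earlier theorem about submultialgebras of $\textbf{mT}$ and on a somewhat delicate construction of $D$ and $E$; yours is self-contained in that respect, and it delivers $X=G(\mathcal{A})$ as a preliminary step rather than deriving it afterwards as the paper does in Lemma~\ref{X in ground if cdf-gen by X} and Theorem~\ref{ground is only cdf-gen set} (so if one adopted your proof, those later statements would already be in hand). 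Your closing observation that well-definedness of $h$ makes any choice of decomposition harmless is exactly the right point to flag, since $\mathcal{A}$ is not assumed disconnected at this stage.
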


\begin{proof}
Take $f:X\rightarrow T(\Sigma^{|A|}, X)$ to be the identity map (that is, $f(x)=x$), and take a collection of choices $C$ such that, for $\sigma\in\Sigma_{n}$, $a_{1}, \ldots  , a_{n}\in A$ and $\alpha_{1}, \ldots  , \alpha_{n}\in T(\Sigma^{|A|}, X)$, 
\begin{equation*}C\sigma_{a_{1}, \ldots  , a_{n}}^{\alpha_{1}, \ldots  , \alpha_{n}}:\sigma_{\mathcal{A}}(a_{1}, \ldots  , a_{n})\rightarrow\sigma_{\textbf{mT}(\Sigma, X, |A|)}(\alpha_{1}, \ldots  , \alpha_{n})\end{equation*}
is an injective function. Such collection of choices exist since $\sigma_{\mathcal{A}}(a_{1}, \ldots  , a_{n})\subseteq A$ and $\sigma_{\textbf{mT}(\Sigma, X, |A|)}(\alpha_{1}, \ldots  , \alpha_{n})$ is of cardinality $|A|$. Since $\mathcal{A}$ is $\textbf{cdf}$-generated by $X$, there exists a homomorphism $f_{C}:\mathcal{A}\rightarrow\textbf{mT}(\Sigma, X, |A|)$ extending $f$ and each $C\sigma_{a_{1}, \ldots  , a_{n}}^{f_{C}(a_{1}), \ldots  , f_{C}(a_{n})}$.

Let $\mathcal{B}=(f_{C}(A), \{\sigma_{\mathcal{B}}\}_{\sigma\in\Sigma})$ be the direct image of $\mathcal{A}$ trough $f_{C}$, so that $f_{C}:\mathcal{A}\rightarrow\mathcal{B}$ is an epimorphism, what is possible given Lemma~\ref{image of hom is sub}. Notice too that 
\begin{equation*}X=X\cap f_{C}(A)=G(\textbf{mT}(\Sigma, X, |A|))\cap f_{C}(A)\subseteq G(\mathcal{B})\end{equation*}
because $\mathcal{B}$ is a submultialgebra of $\textbf{mT}(\Sigma, X, |A|)$. Now, take any $g:G(\mathcal{B})\rightarrow A$ such that $g(x)=x$, for every $x\in X$, and a collection of choices $D$ from $\mathcal{B}$ to $\mathcal{A}$ such that, for any $\sigma\in\Sigma_{n}$, $b_{1}, \ldots  , b_{n}\in f_{C}(A)$ and $a_{1}, \ldots  , a_{n}\in A$, the function
\begin{equation*}D\sigma_{b_{1}, \ldots  , b_{n}}^{a_{1}, \ldots  , a_{n}}:\sigma_{\mathcal{B}}(b_{1}, \ldots  , b_{n})\rightarrow\sigma_{\mathcal{A}}(a_{1}, \ldots  , a_{n})\end{equation*}
satisfies the following: if $a \in \sigma_{\mathcal{A}}(a_{1}, \ldots  , a_{n})$ is such that $C\sigma_{a_{1}, \ldots , a_{n}}^{b_{1}, \ldots , b_{n}}(a) \in \sigma_{\mathcal{B}}(b_{1}, \ldots , b_{n})$ then\\ $D\sigma_{b_{1}, \ldots  , b_{n}}^{a_{1}, \ldots  , a_{n}}(C\sigma_{a_{1}, \ldots  , a_{n}}^{b_{1}, \ldots  , b_{n}}(a))=a$. Given that $C\sigma_{a_{1}, \ldots  , a_{n}}^{b_{1}, \ldots  , b_{n}}$ is injective, this condition is well-defined.

Since $\mathcal{B}$ is $\textbf{cdf}$-generated by $G(\mathcal{B})$, we know to exist a homomorphism $g_{D}:\mathcal{B}\rightarrow \mathcal{A}$ extending $g$ and the functions $D\sigma_{b_{1}, \ldots  , b_{n}}^{g_{D}(b_{1}), \ldots  , g_{D}(b_{n})}$. 

Finally, we take $g_{D}\circ f_{C}:\mathcal{A}\rightarrow\mathcal{A}$. It extends the injection $id=g\circ f:X\rightarrow A$, for which $id(x)=x$. It also extends the collection of choices $E$ defined by
\begin{equation*}E\sigma_{a_{1}, \ldots , a_{n}}^{a'_{1}, \ldots , a'_{n}}=D\sigma_{f_{C}(a_{1}), \ldots , f_{C}(a_{n})}^{a'_{1}, \ldots , a'_{n}}\circ C_{a_{1}, \ldots , a_{n}}^{f_{C}(a_{1}), \ldots , f_{C}(a_{n})}:\sigma_{\mathcal{A}}(a_{1}, \ldots , a_{n})\rightarrow\sigma_{\mathcal{A}}(a'_{1}, \ldots , a'_{n}),\end{equation*}
for $\sigma\in\Sigma_{n}$ and $a_{1}, \ldots a_{n}, a'_{1}, \ldots , a'_{n}\in A$. This way, $E\sigma_{a_{1}, \ldots , a_{n}}^{a_{1}, \ldots , a_{n}}$ is the identity on $\sigma_{\mathcal{A}}(a_{1}, \ldots , a_{n})$. Indeed, for any $a\in \sigma_{\mathcal{A}}(a_{1}, \ldots , a_{n})$, 
\begin{equation*}C\sigma_{a_{1}, \ldots , a_{n}}^{f_{C}(a_{1}), \ldots , f_{C}(a_{n})}(a)=f_{C}(a)\end{equation*}
by definition of $f_{C}$, and, given that $f_{C}:\mathcal{A}\rightarrow\mathcal{B}$ is a homomorphism, $f_{C}(a)$ belongs to $\sigma_{\mathcal{B}}(f_{C}(a_{1}, \ldots , f_{C}(a_{n}))$, meaning that $C\sigma_{a_{1}, \ldots , a_{n}}^{f_{C}(a_{1}), \ldots , f_{C}(a_{n})}(a) \in \sigma_{\mathcal{B}}(f_{C}(a_{1}), \ldots , f_{C}(a_{n}))$. Then
\begin{equation*}E\sigma_{a_{1}, \ldots , a_{n}}^{a_{1}, \ldots , a_{n}}(a) = D\sigma_{f_{C}(a_{1}), \ldots , f_{C}(a_{n})}^{a_{1}, \ldots , a_{n}}(C_{a_{1}, \ldots , a_{n}}^{f_{C}(a_{1}), \ldots , f_{C}(a_{n})}(a)) = a\end{equation*}
by definition of $D$.


But notice that the identical homomorphism $\mathcal{I}:\mathcal{A}\rightarrow \mathcal{A}$ also extends both $id$ and $E$ and, given the unicity of such extensions on the definition of being $\textbf{cdf}$-generated, we obtain that $\mathcal{I}=g_{D}\circ f_{C}$. The fact that $f_{C}:\mathcal{A}\rightarrow\mathcal{B}$ has a left inverse implies that it is injective, and by definition of $\mathcal{B}$ it is also surjective, meaning that it is a bijective function.
Moreover, $g_{D}$ is the inverse function of $f_{C}$.
Finally, for $\sigma\in \Sigma_{n}$ and $a_{1}, \ldots , a_{n}\in A$,
\begin{equation*}f_{C}(\sigma_{\mathcal{A}}(a_{1}, \ldots , a_{n}))\subseteq \sigma_{\mathcal{B}}(f_{C}(a_{1}), \ldots , f_{C}(a_{n})),\end{equation*}
since $f_{C}$ is a homomorphism. However, given that $g_{D}$ is also a homomorphism,
\begin{equation*}g_{D}(\sigma_{\mathcal{B}}(f_{C}(a_{1}), \ldots , f_{C}(a_{n})))\subseteq \sigma_{\mathcal{A}}(g_{D}\circ f_{C}(a_{1}), \ldots , g_{D}\circ f_{C}(a_{n}))=\sigma_{\mathcal{A}}(a_{1}, \ldots , a_{n}),\end{equation*}
and by applying $f_{C}$ to both sides, one obtains
\begin{equation*}\sigma_{\mathcal{B}}(f_{C}(a_{1}), \ldots , f_{C}(a_{n}))=f_{C}(g_{D}(\sigma_{\mathcal{B}}(f_{C}(a_{1}), \ldots , f_{C}(a_{n}))))\subseteq f_{C}(\sigma_{\mathcal{A}}(a_{1}, \ldots , a_{n})).\end{equation*}
This proves that $f_{C}$ is a bijective full homomorphism, that is, an isomorphism.
\end{proof}

Notice that, from the proof above, we can see that if $\mathcal{A}=(A, \{\sigma_{\mathcal{A}}\}_{\sigma\in\Sigma})$ is $\textbf{cdf}$-generated by $X$, then $\mathcal{A}$ is in fact isomorphic to a submultialgebra of $\textbf{mT}(\Sigma, X, M(\mathcal{A}))$, where
\begin{equation*}M(\mathcal{A})=\max \big\{|\sigma_{\mathcal{A}}(a_{1}, \ldots  , a_{n})| \ : \  n\in\mathbb{N}, \, \sigma\in\Sigma_{n}, \, a_{1}, \ldots  , a_{n}\in A \big\}.\end{equation*}
It is clear that $M(\mathcal{A})=\kappa$ for the multialgebra $\textbf{mT}(\Sigma, \mathcal{V}, \kappa)$. The value $M(\mathcal{A})$ has been already regarded in the literature as an important aspect of multialgebras, see~\cite{CuponaMadarasz} (observe, however, that their definition of homomorphism is quite different from ours).

Notice, furthermore, that written in classical terms, the previous Theorems~\ref{sub mT is cdf-gen ground} and~\ref{cdf-gen is iso to sub mT} state a well known result: an algebra is absolutely free iff it is isomorphic to some algebra of terms over the same signature.

\begin{cor}\label{cdf-gen is cdf-gen by ground}
Every $\textbf{cdf}$-generated multialgebra $\mathcal{A}$ is generated by its ground $G(\mathcal{A})$.
\end{cor}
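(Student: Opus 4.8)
The plan is to reduce the statement to the two results already established for submultialgebras of multialgebras of terms, transporting them across an isomorphism. Concretely, suppose $\mathcal{A}$ is $\textbf{cdf}$-generated by some set $X$. By Theorem~\ref{cdf-gen is iso to sub mT} there is an isomorphism $\varphi:\mathcal{A}\rightarrow\mathcal{B}$, where $\mathcal{B}=(B,\{\sigma_{\mathcal{B}}\}_{\sigma\in\Sigma})$ is a submultialgebra of $\textbf{mT}(\Sigma, X, |A|)$. By Lemma~\ref{sub mT is gen ground}, $\mathcal{B}$ is generated by its own ground, that is, $\langle G(\mathcal{B})\rangle = B$. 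The goal is then to pull this equality back along $\varphi$ so as to obtain $\langle G(\mathcal{A})\rangle = A$.

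To do so I would establish two preservation facts for isomorphisms, both routine but essential. First, since $\varphi$ is a bijective full homomorphism, $\varphi(\sigma_{\mathcal{A}}(a_{1}, \ldots, a_{n})) = \sigma_{\mathcal{B}}(\varphi(a_{1}), \ldots, \varphi(a_{n}))$ for every $\sigma\in\Sigma_{n}$ and every tuple; taking unions over all such data gives $\varphi(B(\mathcal{A})) = B(\mathcal{B})$, and using injectivity of $\varphi$ to commute image with complement yields $\varphi(G(\mathcal{A})) = G(\mathcal{B})$. Second, a straightforward induction on $m$ shows $\varphi(\langle S\rangle_{m}) = \langle \varphi(S)\rangle_{m}$ for any $S\subseteq A$: the base case uses fullness applied to the nullary symbols (so that $\varphi(\sigma_{\mathcal{A}}) = \sigma_{\mathcal{B}}$ for $\sigma\in\Sigma_{0}$), and the inductive step uses fullness together with the induction hypothesis. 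Passing to the union over $m$ then gives $\varphi(\langle S\rangle) = \langle \varphi(S)\rangle$.

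Combining the two facts with $S = G(\mathcal{A})$, I obtain
\begin{equation*}\varphi(\langle G(\mathcal{A})\rangle) = \langle \varphi(G(\mathcal{A}))\rangle = \langle G(\mathcal{B})\rangle = B = \varphi(A),\end{equation*}
and since $\varphi$ is injective this forces $\langle G(\mathcal{A})\rangle = A$; that is, $\mathcal{A}$ is generated by $G(\mathcal{A})$.

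I do not expect a genuine obstacle here, as the whole substantive content is already carried by Theorem~\ref{cdf-gen is iso to sub mT} and Lemma~\ref{sub mT is gen ground}; what remains is only to verify that ``having the ground as a generating set'' is an isomorphism-invariant property. The single point requiring a little care is that both preservation facts genuinely rely on the \emph{fullness} of $\varphi$, not merely on its being a homomorphism: an arbitrary homomorphism maps $\sigma_{\mathcal{A}}(a_{1}, \ldots, a_{n})$ only \emph{into}, rather than \emph{onto}, $\sigma_{\mathcal{B}}(\varphi(a_{1}), \ldots, \varphi(a_{n}))$, and without the reverse inclusion one could not conclude that the image of a generating set is again a generating set, nor that builds are preserved exactly.
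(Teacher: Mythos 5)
Your proof is correct and follows exactly the paper's own argument, which likewise combines Theorem~\ref{cdf-gen is iso to sub mT} with Lemma~\ref{sub mT is gen ground} and leaves the transport along the isomorphism implicit. The extra detail you supply --- that fullness and bijectivity are what make builds, grounds, and generated sets isomorphism-invariant --- is a correct filling-in of the step the paper dismisses with ``the result follows.''
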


\begin{proof}
Since every $\textbf{cdf}$-generated multialgebra is isomorphic to a submultialgebra of some $\textbf{mT}(\Sigma, X, \kappa)$, from \ref{cdf-gen is iso to sub mT}, and every submultialgebra of $\textbf{mT}(\Sigma, X, \kappa)$ is generated by its ground, the result follows.
\end{proof}

\begin{cor}
Every $\textbf{cdf}$-generated multialgebra $\mathcal{A}$ is $\textbf{cdf}$-generated by its ground $G(\mathcal{A})$.
\end{cor}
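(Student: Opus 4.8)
The plan is to reduce the statement to the two results already established, namely Theorem~\ref{cdf-gen is iso to sub mT} and Theorem~\ref{sub mT is cdf-gen ground}, by passing through an isomorphism and then checking that both the ground and the property of being $\textbf{cdf}$-generated travel correctly along isomorphisms. First I would invoke the hypothesis: since $\mathcal{A}$ is $\textbf{cdf}$-generated, it is $\textbf{cdf}$-generated by some set $X\subseteq A$. By Theorem~\ref{cdf-gen is iso to sub mT}, there is an isomorphism $\varphi:\mathcal{A}\rightarrow\mathcal{A}'$, where $\mathcal{A}'$ is a submultialgebra of $\textbf{mT}(\Sigma, X, |A|)$. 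Theorem~\ref{sub mT is cdf-gen ground} then tells us that $\mathcal{A}'$ is $\textbf{cdf}$-generated by $G(\mathcal{A}')$, and the remaining task is to pull this property back along $\varphi$ onto $\mathcal{A}$ and its own ground.

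The second, routine, step is to record that isomorphisms carry grounds to grounds. Since $\varphi$ is a bijective full homomorphism, $\varphi(\sigma_{\mathcal{A}}(a_{1}, \ldots , a_{n}))=\sigma_{\mathcal{A}'}(\varphi(a_{1}), \ldots , \varphi(a_{n}))$, and as $(a_{1}, \ldots , a_{n})$ ranges over $A^{n}$ the tuple $(\varphi(a_{1}), \ldots , \varphi(a_{n}))$ ranges over all of $(A')^{n}$; hence $\varphi(B(\mathcal{A}))=B(\mathcal{A}')$, and by bijectivity $\varphi(G(\mathcal{A}))=G(\mathcal{A}')$, so that $G(\mathcal{A})=\varphi^{-1}(G(\mathcal{A}'))$.

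The main step, and the one I expect to carry the real content, is to show that being $\textbf{cdf}$-generated is invariant under isomorphism: if $\mathcal{A}'$ is $\textbf{cdf}$-generated by $Y$ and $\varphi:\mathcal{A}\rightarrow\mathcal{A}'$ is an isomorphism, then $\mathcal{A}$ is $\textbf{cdf}$-generated by $\varphi^{-1}(Y)$. Given an arbitrary target $\mathcal{B}$, a function $f:\varphi^{-1}(Y)\rightarrow B$, and a collection of choices $C$ from $\mathcal{A}$ to $\mathcal{B}$, I would transport the data along $\varphi$: set $f'=f\circ(\varphi^{-1}|_{Y}):Y\rightarrow B$, and define a collection of choices $C'$ from $\mathcal{A}'$ to $\mathcal{B}$ by precomposing each component of $C$ with the appropriate restriction of $\varphi^{-1}$, where the fullness identity $\sigma_{\mathcal{A}'}(a'_{1}, \ldots , a'_{n})=\varphi(\sigma_{\mathcal{A}}(\varphi^{-1}(a'_{1}), \ldots , \varphi^{-1}(a'_{n})))$ guarantees that the domains match. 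Applying the $\textbf{cdf}$-generation of $\mathcal{A}'$ yields a unique homomorphism $f'_{C'}:\mathcal{A}'\rightarrow\mathcal{B}$, and I would set $f_{C}=f'_{C'}\circ\varphi$. A direct verification confirms that $f_{C}$ is a homomorphism extending $f$ and satisfying $f_{C}|_{\sigma_{\mathcal{A}}(a_{1}, \ldots , a_{n})}=C\sigma_{a_{1}, \ldots , a_{n}}^{f_{C}(a_{1}), \ldots , f_{C}(a_{n})}$; uniqueness follows because any rival $g$ produces $g\circ\varphi^{-1}$ satisfying the conditions for $f'$ and $C'$, forcing $g\circ\varphi^{-1}=f'_{C'}$ and hence $g=f_{C}$. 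The delicate point here is keeping track of the superscripts of the choice functions through $\varphi$, so that the condition transported from $\mathcal{A}'$ becomes exactly the condition relative to $C$ on $\mathcal{A}$. Combining this invariance with the ground computation of the previous paragraph, we conclude that $\mathcal{A}$ is $\textbf{cdf}$-generated by $\varphi^{-1}(G(\mathcal{A}'))=G(\mathcal{A})$, as desired.
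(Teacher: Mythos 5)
Your argument is correct and follows exactly the route the paper intends: the corollary is stated there without proof because it is the same reduction as the preceding Corollary~\ref{cdf-gen is cdf-gen by ground}, namely passing to an isomorphic submultialgebra of $\textbf{mT}(\Sigma, X, |A|)$ via Theorem~\ref{cdf-gen is iso to sub mT} and then applying Theorem~\ref{sub mT is cdf-gen ground}. Your explicit verifications that isomorphisms carry grounds to grounds and that $\textbf{cdf}$-generation transfers along isomorphisms (with the choice collections transported by precomposition with $\varphi^{-1}$) supply precisely the details the paper leaves implicit, and they check out.
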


\begin{definition}
A $\Sigma$-multialgebra $\mathcal{A}=(A, \{\sigma_{\mathcal{A}}\}_{\sigma\in\Sigma})$ is said to be {\it disconnected} if, for every $\sigma\in\Sigma_{n}, \theta\in \Sigma_{m}$, $a_{1}, \ldots   , a_{n}, b_{1}, \ldots   , b_{m}\in A$,
\begin{equation*}\sigma_{\mathcal{A}}(a_{1}, \ldots  , a_{n})\cap \theta_{\mathcal{A}}(b_{1}, \ldots  , b_{m})\neq\emptyset\end{equation*}
implies that $n=m$, $\sigma=\theta$ and $a_{1}=b_{1}, \ldots  , a_{n}=b_{m}$.
\end{definition}

\begin{example}
$\textbf{T}(\Sigma, \mathcal{V})$ is disconnected.
\end{example}

\begin{example}
All directed forests of height $\omega$, when considered as $\Sigma_{s}$-multialgebras, are disconnected, given that no two arrows point to the same element.
\end{example}

It is clear that if $\mathcal{B}$ is a submultialgebra of $\mathcal{A}$ and $\mathcal{A}$ is disconnected, then $\mathcal{B}$ is also disconnected, since if $\sigma_{\mathcal{B}}(a_{1}, \ldots  , a_{n})\cap \theta_{\mathcal{B}}(b_{1}, \ldots  , b_{m})\neq\emptyset$, for $a_{1}, \ldots  , a_{n}, b_{1}, \ldots  , b_{m}\in B$, given that $\sigma_{\mathcal{B}}(a_{1}, \ldots  , a_{n})\subseteq\sigma_{\mathcal{A}}(a_{1}, \ldots  , a_{n})$ and $\theta_{\mathcal{B}}(b_{1}, \ldots  , b_{m})\subseteq \theta_{\mathcal{A}}(b_{1}, \ldots  , b_{m})$, we find that $\sigma_{\mathcal{A}}(a_{1}, \ldots  , a_{n})\cap \theta_{\mathcal{A}}(b_{1}, \ldots  , b_{m})\neq\emptyset$ and therefore $n=m$, $\sigma=\theta$ and $a_{1}=b_{1}, \ldots  , a_{n}=b_{m}$.

We noticed before that $\textbf{mT}(\Sigma, \mathcal{V}, \kappa)$ is disconnected, and by Theorem~\ref{cdf-gen is iso to sub mT} we obtain that every $\textbf{cdf}-$generated algebra is disconnected. This also means something deeper: being disconnected is, in a way, a measure of how free of identities a multialgebra is. After all, the fact that no two multioperations agree on any elements is strongly indicative that the multialgebra does not satisfy any identities.

\subsection{\ldots being disconnected and generated by its ground}\label{disc. and ground}

Now, we continue to look at other possible characterizations of multialgebras of terms that could lead to a generalization of free algebras (i.e., of algebras of terms). One sees that algebras of terms do not have identities, what would partially correspond in our study to the concept of being disconnected. But what is possibly more representative of our intuition for terms is that one starts by defining them from elements that are as simple as possible (variables), and continues indefinitely. The concept of indecomposable element is here replaced by that of being an element of the ground, so one would expect that being generated by it plays some role in what we have defined so far.

\begin{lemma}\label{X in ground if cdf-gen by X}
If $\mathcal{A}$ is $\textbf{cdf}$-generated by $X$, then $X\subseteq G(\mathcal{A})$.
\end{lemma}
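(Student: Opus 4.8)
The plan is to argue by contradiction, exploiting the fact that being $\textbf{cdf}$-generated by $X$ demands the \emph{existence} of a homomorphism $f_{C}$ agreeing with a prescribed $f$ on $X$, no matter which target $\mathcal{B}$, function $f$ and collection of choices $C$ we feed it. So I would produce a single bad triple $(\mathcal{B}, f, C)$ for which no homomorphism can extend $f$ on $X$; this is only possible if $X$ avoids $B(\mathcal{A})$, which is exactly the claim $X \subseteq G(\mathcal{A})$.

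Concretely, suppose toward a contradiction that some $x \in X$ satisfies $x \notin G(\mathcal{A})$, i.e.\ $x \in B(\mathcal{A})$. Then there are $\sigma \in \Sigma_{n}$ and $a_{1}, \ldots, a_{n} \in A$ with $x \in \sigma_{\mathcal{A}}(a_{1}, \ldots, a_{n})$. As witness I would take the two-element multialgebra $\mathcal{B} = (\{0,1\}, \{\theta_{\mathcal{B}}\}_{\theta\in\Sigma})$ in which every multioperation is constant with value $\{0\}$; that is, $\theta_{\mathcal{B}}(c_{1}, \ldots, c_{m}) = \{0\}$ for every $\theta \in \Sigma_{m}$ and all $c_{1}, \ldots, c_{m} \in \{0,1\}$. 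This is a legitimate $\Sigma$-multialgebra, since all outputs are nonempty, and its build is $B(\mathcal{B}) = \{0\}$, so that $1 \in G(\mathcal{B})$.

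Next I would observe that the collection of choices from $\mathcal{A}$ to $\mathcal{B}$ is forced: each component function $C\theta_{a_{1}, \ldots, a_{m}}^{c_{1}, \ldots, c_{m}}$ must map into $\theta_{\mathcal{B}}(c_{1}, \ldots, c_{m}) = \{0\}$, so it is the unique constant map to $0$; hence a collection of choices $C$ exists (and is in fact the only one). Now choose $f : X \to \{0,1\}$ with $f(x) = 1$ and arbitrary values elsewhere. Since $\mathcal{A}$ is $\textbf{cdf}$-generated by $X$, there is a homomorphism $f_{C} : \mathcal{A} \to \mathcal{B}$ with $f_{C}|_{X} = f$, whence $f_{C}(x) = 1$. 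On the other hand, applying the homomorphism condition to $x \in \sigma_{\mathcal{A}}(a_{1}, \ldots, a_{n})$ yields $f_{C}(x) \in \sigma_{\mathcal{B}}(f_{C}(a_{1}), \ldots, f_{C}(a_{n})) = \{0\}$, so $f_{C}(x) = 0$. This contradiction forces $x \in G(\mathcal{A})$, and therefore $X \subseteq G(\mathcal{A})$.

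The argument is short and I do not expect a genuine obstacle; the only point requiring care is the choice of witness. The target $\mathcal{B}$ must simultaneously (i) place some value, here $1$, \emph{outside} its build, so that no homomorphic image of an element of $B(\mathcal{A})$ can equal it, and (ii) be rigid enough that the collection of choices is automatically available. The collapsing two-element multialgebra achieves both at once, which is why it is the right gadget. One could alternatively deduce the claim from Theorem~\ref{cdf-gen is iso to sub mT} together with the identity $G(\textbf{mT}(\Sigma, X, |A|)) = X$, but the direct construction above is cleaner and entirely self-contained.
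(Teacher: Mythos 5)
Your proof is correct, but it takes a genuinely different route from the paper's. The paper derives the lemma as a quick consequence of Theorem~\ref{cdf-gen is iso to sub mT}: it replaces $\mathcal{A}$ (up to isomorphism) by a submultialgebra of $\textbf{mT}(\Sigma, X, |A|)$ containing $X$, and then uses the facts that $G(\textbf{mT}(\Sigma, X, |A|))=X$ and that grounds can only grow when passing to submultialgebras, so that $X = G(\textbf{mT}(\Sigma, X, |A|))\cap A \subseteq G(\mathcal{A})$. You instead argue directly from the definition of being $\textbf{cdf}$-generated, exhibiting a collapsing two-element target $\mathcal{B}$ whose operations all return $\{0\}$: this makes the collection of choices unique (so it certainly exists), forces $f_{C}(x)=0$ for any $x$ in the build of $\mathcal{A}$, and then the freedom to set $f(x)=1$ produces the contradiction. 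Your argument checks out in every detail (the only mild subtlety, that the clash between clause (1) and clause (2) of the definition of $\textbf{cdf}$-generation is itself the contradiction, is handled correctly), and it buys self-containedness: it does not depend on the representation theorem or on the computation of the ground of $\textbf{mT}(\Sigma,\mathcal{V},\kappa)$, so it could be stated and proved before Section~\ref{cdf-generated}'s main results. What the paper's route buys is economy given what has already been established, and it keeps the lemma conceptually tied to the picture of $\textbf{cdf}$-generated multialgebras as submultialgebras of multialgebras of terms. You correctly note the paper's alternative yourself in your closing remark.
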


\begin{proof}
If $\mathcal{A}$ is $\textbf{cdf}$-generated by $X$, then $\mathcal{A}$ is isomorphic to a submultialgebra of $\textbf{mT}(\Sigma, X, |A|)$ containing $X$, from Theorem~\ref{cdf-gen is iso to sub mT}. Let us assume that $\mathcal{A}$ is equal to this submultialgebra, without loss of generality.
Then,  $X=G(\textbf{mT}(\Sigma, X, |A|))\cap A\subseteq G(\mathcal{A})$.
\end{proof}

\begin{lemma}\label{no proper cdf-gen sets}
If $\mathcal{A}$ is $\textbf{cdf}$-generated by both $X$ and $Y$, with $X\subseteq Y$, then $X=Y$.
\end{lemma}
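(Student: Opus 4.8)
The plan is to argue by contradiction, exploiting the uniqueness clause in the definition of being $\textbf{cdf}$-generated. Suppose $X\subseteq Y$ but $X\neq Y$, so that we may fix some $y\in Y\setminus X$. The idea is to manufacture two genuinely different homomorphisms out of $\mathcal{A}$ which nonetheless extend the same function on $X$ and the same collection of choices; since $\mathcal{A}$ is $\textbf{cdf}$-generated by $X$, this will contradict the uniqueness of such an extension.

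First I would set up the target and the auxiliary data. Let $\mathcal{B}=(\{0,1\}, \{\sigma_{\mathcal{B}}\}_{\sigma\in\Sigma})$ be the $\Sigma$-multialgebra with $\sigma_{\mathcal{B}}(b_{1}, \ldots, b_{n})=\{0,1\}$ for every $\sigma$ and all inputs; this is a legitimate multialgebra, since every output is non-empty. Pick any collection of choices $D$ from $\mathcal{A}$ to $\mathcal{B}$, which exists because each $\sigma_{\mathcal{B}}(b_{1}, \ldots, b_{n})$ is non-empty, so each required function $\sigma_{\mathcal{A}}(a_{1}, \ldots, a_{n})\rightarrow\sigma_{\mathcal{B}}(b_{1}, \ldots, b_{n})$ can be chosen. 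Finally, define two functions $g_{1}, g_{2}:Y\rightarrow\{0,1\}$ that agree on $X$ but differ at $y$: let $g_{1}$ be constantly $0$, and let $g_{2}$ agree with $g_{1}$ on $Y\setminus\{y\}$ while $g_{2}(y)=1$. Since $y\notin X$ and $X\subseteq Y$, we have $X\subseteq Y\setminus\{y\}$, whence $g_{1}|_{X}=g_{2}|_{X}=:f$.

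Next I would invoke that $\mathcal{A}$ is $\textbf{cdf}$-generated by $Y$: applied to $g_{1}$ and to $g_{2}$, each paired with the single collection of choices $D$, this yields homomorphisms $(g_{1})_{D}, (g_{2})_{D}:\mathcal{A}\rightarrow\mathcal{B}$ with $(g_{i})_{D}|_{Y}=g_{i}$ and $(g_{i})_{D}|_{\sigma_{\mathcal{A}}(a_{1}, \ldots, a_{n})}=D\sigma_{a_{1}, \ldots, a_{n}}^{(g_{i})_{D}(a_{1}), \ldots, (g_{i})_{D}(a_{n})}$. The crucial observation is that each of these two homomorphisms also satisfies the two defining conditions of being the extension of $f$ along $D$ from the standpoint of $X$: indeed $(g_{i})_{D}|_{X}=g_{i}|_{X}=f$, giving condition $(1)$, and condition $(2)$ is literally the same equality in both readings, since a collection of choices is a datum attached to the pair $(\mathcal{A}, \mathcal{B})$ alone and does not depend on whether we regard $\mathcal{A}$ as $\textbf{cdf}$-generated by $X$ or by $Y$, and the quantification in $(2)$ ranges over all of $A$ in either case.

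Finally, the uniqueness clause in the definition of $\mathcal{A}$ being $\textbf{cdf}$-generated by $X$ forces $(g_{1})_{D}=(g_{2})_{D}$. But $y\in Y$ gives $(g_{1})_{D}(y)=g_{1}(y)=0$ and $(g_{2})_{D}(y)=g_{2}(y)=1$, a contradiction. Hence $Y\setminus X=\emptyset$, that is, $X=Y$. I expect the only delicate point to be the crucial observation above, namely checking that the homomorphisms produced from the $Y$-structure really do meet conditions $(1)$ and $(2)$ relative to $X$ with the \emph{same} $f$ and the \emph{same} $D$; but this is immediate once one notes that $g_{1}$ and $g_{2}$ were arranged to coincide on $X$ and that the second condition is insensitive to the choice of generating set.
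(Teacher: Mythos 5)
Your proof is correct and follows essentially the same route as the paper's: both argue by contradiction, producing two maps on $Y$ that agree on $X$ but differ at some $y\in Y\setminus X$, pairing them with one fixed collection of choices, and letting the uniqueness clause of $\textbf{cdf}$-generation over $X$ force the two resulting homomorphisms to coincide. The only difference is cosmetic: you exhibit a concrete two-element target multialgebra, whereas the paper merely posits some $\mathcal{B}$ with $|B|\geq 2$.
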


\begin{proof}
Suppose $X\neq Y$ and let $y\in Y\setminus X$. Take a $\Sigma$-multialgebra $\mathcal{B}$, over the same signature as that of $\mathcal{A}$, such that $|B|\geq 2$, and a collection of choices $C$ from $\mathcal{A}$ to $\mathcal{B}$.

Take also two functions $g, h:Y\rightarrow B$ such that $g|_{X}=h|_{X}=f$ and $g(y)\neq h(y)$, what is possible since $|B|\geq 2$. Given that $\mathcal{A}$ is $\textbf{cdf}$-generated by $Y$, there exist unique homomorphisms $g_{C}$ and $h_{C}$ extending both $g$ and $C$, and $h$ and $C$, respectively.

However, $g_{C}$ and $h_{C}$ extend both $g|_{X}:X\rightarrow B$ and $C$, and since $\mathcal{A}$ is $\textbf{cdf}$-generated by $X$, we find that $g_{C}=h_{C}$. This is not possible, since $g_{C}(y)\neq h_{C}(y)$, what must implies that $Y\setminus X=\emptyset$ and therefore $X=Y$.
\end{proof}

\begin{theorem}\label{ground is only cdf-gen set}
Every $\textbf{cdf}$-generated multialgebra $\mathcal{A}$ is uniquely $\textbf{cdf}$-generated by its ground.
\end{theorem}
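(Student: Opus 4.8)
The plan is to assemble this statement from the two lemmas and the corollaries that precede it, since almost all of the substantive work has already been done. The claim has two components which I would treat separately: first, that $\mathcal{A}$ is $\textbf{cdf}$-generated by its ground $G(\mathcal{A})$ (the existence part), and second, that $G(\mathcal{A})$ is the \emph{only} set by which $\mathcal{A}$ is $\textbf{cdf}$-generated (the uniqueness part).

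For existence I would simply invoke the corollary established immediately after Corollary~\ref{cdf-gen is cdf-gen by ground}: every $\textbf{cdf}$-generated multialgebra is $\textbf{cdf}$-generated by its ground. This in turn rests on Theorem~\ref{cdf-gen is iso to sub mT}, which identifies $\mathcal{A}$ with a submultialgebra of some $\textbf{mT}(\Sigma, X, \kappa)$, together with Theorem~\ref{sub mT is cdf-gen ground}, which guarantees that any such submultialgebra is $\textbf{cdf}$-generated by its own ground.

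For uniqueness I would argue as follows. Suppose $\mathcal{A}$ is $\textbf{cdf}$-generated by some set $X$. By Lemma~\ref{X in ground if cdf-gen by X}, this forces $X\subseteq G(\mathcal{A})$. On the other hand, by the existence part just recalled, $\mathcal{A}$ is also $\textbf{cdf}$-generated by $G(\mathcal{A})$. We are therefore in the situation of Lemma~\ref{no proper cdf-gen sets}, with the two $\textbf{cdf}$-generating sets $X$ and $G(\mathcal{A})$ satisfying the inclusion $X\subseteq G(\mathcal{A})$; that lemma immediately yields $X=G(\mathcal{A})$. Hence every $\textbf{cdf}$-generating set coincides with the ground, which is exactly the asserted uniqueness.

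I do not expect any genuine obstacle here, as the theorem is essentially a repackaging of the preceding results; the only point requiring a little care is the logical bookkeeping, namely verifying that Lemma~\ref{X in ground if cdf-gen by X} supplies precisely the inclusion $X\subseteq G(\mathcal{A})$ needed to apply Lemma~\ref{no proper cdf-gen sets} in the correct direction. The conceptual content of the statement has already been absorbed into those two lemmas and into the isomorphism theorem, so the proof itself should be only a few lines long.
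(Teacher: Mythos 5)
Your proposal is correct and follows essentially the same route as the paper's own proof: existence via the corollary that every $\textbf{cdf}$-generated multialgebra is $\textbf{cdf}$-generated by its ground, and uniqueness by combining Lemma~\ref{X in ground if cdf-gen by X} (to get $X\subseteq G(\mathcal{A})$) with Lemma~\ref{no proper cdf-gen sets}. There is nothing to add.
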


\begin{proof}
From Corollary~\ref{cdf-gen is cdf-gen by ground}, $\mathcal{A}$ is $\textbf{cdf}$-generated by $G(\mathcal{A})$, and from Lemma~\ref{X in ground if cdf-gen by X}, if $\mathcal{A}$ is also $\textbf{cdf}$-generated by $X$, then $X\subseteq G(\mathcal{A})$. By Lemma~\ref{no proper cdf-gen sets}, this implies that $X=G(\mathcal{A})$.
\end{proof}

We have proved so far that if $\mathcal{A}$ is $\textbf{cdf}$-generated, then $\mathcal{A}$ is generated by its ground and disconnected. We would like to prove that this is enough to characterize a $\textbf{cdf}$-generated multialgebra. That is, if $\mathcal{A}$ is generated by its ground and disconnected, then it is $\textbf{cdf}$-generated, exactly by its ground.

The idea is similar to the one we used to prove that all submultialgebras of $\textbf{mT}(\Sigma, \mathcal{V}, \kappa)$ are $\textbf{cdf}$-generated: take a multialgebra $\mathcal{A}$ that is both generated by its ground $G(\mathcal{A})$, which will be denoted by $X$, and disconnected, and fix a multialgebra $\mathcal{B}$ over the same signature, a function $f:X\rightarrow B$ and a collection of choices $C$ from $\mathcal{A}$ to $\mathcal{B}$.

We define a function $f_{C}:A\rightarrow B$ using induction on the $\langle X\rangle_{n}$. For $n=0$, either we have an element $x\in X$, when we define $f_{C}(x)=f(x)$, or we have $a\in\sigma_{\mathcal{A}}$ for some $\sigma\in\Sigma_{0}$, when we define $f_{C}(a)=C\sigma(a)$. Notice that, up to this point, there are no contradictions in this definition, given that an element cannot belong both to $X$ and to a $\sigma_{\mathcal{A}}$, since $X=G(\mathcal{A})$.

Suppose we have successfully defined $f_{C}$ on $\langle X\rangle_{m}$ and take an $a\in \sigma_{\mathcal{A}}(a_{1}, \ldots  , a_{n})$ for $a_{1}, \ldots  , a_{n}\in\langle X\rangle_{m}$. We then define 
\begin{equation*}f_{C}(a)=C\sigma_{a_{1}, \ldots  , a_{n}}^{f_{C}(a_{1}), \ldots  , f_{C}(a_{n})}(a).\end{equation*}
Again the function remains well-defined: $a$ cannot belong to $X$, since $X=G(\mathcal{A})$, and cannot belong to a $\theta_{\mathcal{A}}(b_{1}, \ldots  , b_{p})$ unless $p=n$, $\theta=\sigma$ and $b_{1}=a_{1}, \ldots   ,b_{p}=a_{n}$, since $\mathcal{A}$ is disconnected.

Clearly $f_{C}$ is a homomorphism, since the image of $\sigma_{\mathcal{A}}(a_{1}, \ldots  , a_{n})$ under $f_{C}$ is contained in $\sigma_{\mathcal{B}}(f_{C}(a_{1}), \ldots  , f_{C}(a_{n}))$, and $f_{C}$ extends both $f$ and $C$.

\begin{lemma}\label{gen by ground and disc implies cdf-gen}
If a multialgebra $\mathcal{A}$ is both generated by its ground $X$ and disconnected, $\mathcal{A}$ is $\textbf{cdf}$-generated by $X$.
\end{lemma}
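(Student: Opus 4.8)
The plan is to finish the construction already laid out in the discussion preceding the statement, where a candidate map $f_{C}:A\rightarrow B$ was defined by recursion on the stages $\langle X\rangle_{m}$, checked to be well-defined, and shown to be a homomorphism extending $f$ and the collection of choices $C$. That discussion establishes the \emph{existence} half of $\textbf{cdf}$-generation; all that remains is to prove the \emph{uniqueness} clause, namely that any homomorphism meeting conditions $(1)$ and $(2)$ of the definition of being $\textbf{cdf}$-generated must coincide with $f_{C}$.

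To do this, I would take an arbitrary homomorphism $g:\mathcal{A}\rightarrow\mathcal{B}$ with $g|_{X}=f$ and $g|_{\sigma_{\mathcal{A}}(a_{1},\ldots,a_{n})}=C\sigma_{a_{1},\ldots,a_{n}}^{g(a_{1}),\ldots,g(a_{n})}$ for all $\sigma$ and $a_{1},\ldots,a_{n}$, and prove $g=f_{C}$ by induction on the least $m$ with $a\in\langle X\rangle_{m}$, mirroring the uniqueness argument of Theorem~\ref{sub mT is cdf-gen ground}. In the base case $m=0$, an element of $\langle X\rangle_{0}$ either lies in $X$, where $g(a)=f(a)=f_{C}(a)$, or lies in $\sigma_{\mathcal{A}}$ for some $\sigma\in\Sigma_{0}$, where $g(a)=C\sigma(a)=f_{C}(a)$. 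In the inductive step I would pick $a\in\langle X\rangle_{m+1}\setminus\langle X\rangle_{m}$, so that $a\in\sigma_{\mathcal{A}}(a_{1},\ldots,a_{n})$ for some $\sigma\in\Sigma_{n}$ and $a_{1},\ldots,a_{n}\in\langle X\rangle_{m}$; the induction hypothesis then gives $g(a_{i})=f_{C}(a_{i})$ for each $i$, and hence
\begin{equation*}
g(a)=C\sigma_{a_{1},\ldots,a_{n}}^{g(a_{1}),\ldots,g(a_{n})}(a)=C\sigma_{a_{1},\ldots,a_{n}}^{f_{C}(a_{1}),\ldots,f_{C}(a_{n})}(a)=f_{C}(a).
\end{equation*}
Because $\mathcal{A}$ is generated by its ground, $A=\langle X\rangle$, so this exhausts all of $A$ and forces $g=f_{C}$.

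The genuine use of the two hypotheses occurs not in this uniqueness induction but in the well-definedness of $f_{C}$, which the preceding discussion has already handled: disconnectedness ensures that a single value is assigned when $a$ belongs to several multioperation outputs, and the identity $X=G(\mathcal{A})$ ensures that the ground clause never clashes with an operation clause. Granting that, I expect no real obstacle in the uniqueness step; the only point requiring attention is the order of the argument — one must apply the induction hypothesis to the strictly earlier arguments $a_{1},\ldots,a_{n}$ before invoking the defining property of $g$, exactly as in the construction of $f_{C}$ itself.
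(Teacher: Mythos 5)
Your proposal is correct and follows essentially the same route as the paper: the paper's proof likewise treats the preceding discussion as having settled existence (well-definedness via disconnectedness and $X=G(\mathcal{A})$, plus the homomorphism property) and devotes the proof of the lemma itself to the uniqueness induction on $\langle X\rangle_{m}$, with the identical base case split between $X$ and $\Sigma_{0}$-constants and the identical inductive step via the collection of choices. No gaps.
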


\begin{proof}
It remains for us to show that $f_{C}$, as defined above, is the only homomorphism extending $f$ and $C$. Suppose $g$ is another such homomorphism and we shall proceed yet again by induction.

On $\langle X\rangle_{0}$, we have that $f_{C}(x)=f(x)=g(x)$ for all $x\in X$; and for $\sigma\in\Sigma_{0}$ and $a\in\sigma_{\mathcal{A}}$ we have that 
\begin{equation*}f_{C}(a)=C\sigma(a)=g(a),\end{equation*}
hence $f_{C}$ and $g$ coincide on $\langle X\rangle_{0}$. Suppose that $f_{C}$ and $g$ are equal on $\langle X\rangle_{m}$ and take $a\in\sigma_{\mathcal{A}}(a_{1}, \ldots  , a_{n})$ for $a_{1}, \ldots  , a_{n}\in \langle X\rangle_{m}$. We have by induction hypothesis that
\begin{equation*}f_{C}(a)=C\sigma_{a_{1}, \ldots  , a_{n}}^{f_{C}(a_{1}), \ldots  , f_{C}(a_{n})}(a)=C\sigma_{a_{1}, \ldots  , a_{n}}^{g(a_{1}), \ldots  , g(a_{n})}(a)=g(a),\end{equation*}
which concludes our proof.
\end{proof}

\begin{theorem}\label{cdf-gen iff gen by ground and disc}
A multialgebra $\mathcal{A}$ is $\textbf{cdf}$-generated iff $\mathcal{A}$ is generated by its ground and disconnected.
\end{theorem}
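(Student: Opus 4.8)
The plan is to assemble the two implications from results already established, since each direction has essentially been proved in the preceding material. For the forward direction, suppose $\mathcal{A}$ is $\textbf{cdf}$-generated, say by some set $X$. That $\mathcal{A}$ is generated by its ground is immediate from Corollary~\ref{cdf-gen is cdf-gen by ground}, which states precisely that every $\textbf{cdf}$-generated multialgebra is generated by $G(\mathcal{A})$. So the only substantive point in this direction is disconnectedness.

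For disconnectedness, I would invoke Theorem~\ref{cdf-gen is iso to sub mT}: being $\textbf{cdf}$-generated by $X$ forces $\mathcal{A}$ to be isomorphic to a submultialgebra of $\textbf{mT}(\Sigma, X, |A|)$. We already observed that $\textbf{mT}(\Sigma, \mathcal{V}, \kappa)$ is disconnected, and we noted before the subsection that disconnectedness passes from a multialgebra to any of its submultialgebras. The remaining small gap is that disconnectedness is preserved under isomorphism; this follows directly, since an isomorphism $h:\mathcal{A}'\to\mathcal{A}$ is a bijective full homomorphism, so $h$ carries the (possibly empty) intersections $\sigma_{\mathcal{A}'}(\bar a)\cap\theta_{\mathcal{A}'}(\bar b)$ bijectively onto $\sigma_{\mathcal{A}}(h\bar a)\cap\theta_{\mathcal{A}}(h\bar b)$, so a nonempty intersection on one side corresponds to a nonempty intersection on the other, and injectivity of $h$ transfers the equalities $\sigma=\theta$, $n=m$, $a_i=b_i$ back and forth. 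Hence $\mathcal{A}$, being isomorphic to a disconnected submultialgebra of $\textbf{mT}(\Sigma, X, |A|)$, is itself disconnected.

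The converse direction requires no new argument at all: if $\mathcal{A}$ is disconnected and generated by its ground $G(\mathcal{A})$, then taking $X = G(\mathcal{A})$ and applying Lemma~\ref{gen by ground and disc implies cdf-gen} yields directly that $\mathcal{A}$ is $\textbf{cdf}$-generated (indeed, by $G(\mathcal{A})$). Thus both implications are settled by citing, respectively, Corollary~\ref{cdf-gen is cdf-gen by ground} together with Theorem~\ref{cdf-gen is iso to sub mT}, and Lemma~\ref{gen by ground and disc implies cdf-gen}.

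Since essentially all the work has been front-loaded into the earlier lemmas and theorems, there is no genuine obstacle here; the one place deserving an explicit remark is the transfer of disconnectedness across the isomorphism of Theorem~\ref{cdf-gen is iso to sub mT}, which is why I would spell that step out rather than leave it implicit. Everything else is a matter of correctly chaining the cited results.
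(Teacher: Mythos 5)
Your proof is correct and assembles the two implications from exactly the same ingredients the paper uses: Corollary~\ref{cdf-gen is cdf-gen by ground} and Theorem~\ref{cdf-gen is iso to sub mT} (together with the disconnectedness of $\textbf{mT}(\Sigma,\mathcal{V},\kappa)$ and its heredity to submultialgebras) for the forward direction, and Lemma~\ref{gen by ground and disc implies cdf-gen} for the converse. Your explicit remark that disconnectedness transfers across the isomorphism is a detail the paper leaves implicit, but it is the same argument.
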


It is important to analyze, by means of examples,  the differences between the several concepts involved: are there multialgebras that are disconnected but not generated by their grounds? Are there multialgebras that are generated by their grounds but not disconnected? If not, does being generated by its ground imply being disconnected or vice-versa? We show below that this is not the case by providing examples answering positively both previous questions.

\begin{example}\label{C}
Take the signature $\Sigma_{s}$ from Example~\ref{s}. Consider the $\Sigma_{s}-$multialgebra $\mathcal{C}=(\{-1,1\}, \{s_{\mathcal{C}}\})$ such that $s_{\mathcal{C}}(-1)=\{1\}$ and $s_{\mathcal{C}}(1)=\{-1\}$ (that is, $s_{\mathcal{C}}(x)=\{-x\}$).

We state that $\mathcal{C}$ is disconnected, but not generated by its ground. $\mathcal{C}$ is clearly disconnected since $s_{\mathcal{C}}(-1)\cap s_{\mathcal{C}}(1)=\emptyset$; now, $B(\mathcal{C})=s_{\mathcal{C}}(-1)\cup s_{\mathcal{C}}(1)=\{-1,1\}$, and so $G(\mathcal{C})=\emptyset$.

Since $\Sigma_s$ has no $0$-ary operators and $G(\mathcal{C})=\emptyset$, it follows that $\langle G(\mathcal{C})\rangle_{0}=\emptyset$
 and therefore $\langle G(\mathcal{C})\rangle_{n}=\emptyset$ for every $n\in\mathbb{N}$, meaning that $G(\mathcal{C})$ does not generate $\mathcal{C}$.
\end{example}

\begin{example}\label{B}
Take again the signature $\Sigma_{s}$ with a single unary operator, from Example~\ref{s}. Consider the $\Sigma_{s}$-multialgebra $\mathcal{B}=(\{0, 1\}, \{s_{\mathcal{B}}\})$ such that $s_{\mathcal{B}}(0)=\{1\}$ and $s_{\mathcal{B}}(1)=\{1\}$ (that is, $s_{\mathcal{B}}(x)=\{1\}$).

Then $\mathcal{B}$ is clearly not disconnected, since $s_{\mathcal{B}}(0)\cap s_{\mathcal{B}}(1)=\{1\}$, yet $\mathcal{B}$ is generated by its ground: $B(\mathcal{B})=\{1\}$ and so $G(\mathcal{B})=\{0\}$, and we see that $\langle G(\mathcal{B})\rangle_{1}$ is already $\{0,1\}$.
\end{example}

\begin{figure}[H]
\centering
\begin{minipage}[t]{4cm}
\centering
\begin{tikzcd}
    -1 \arrow[rr, bend left=50, "s_{\mathcal{C}}"]  && 1 \arrow[ll, bend left=50, "s_{\mathcal{C}}"]
  \end{tikzcd}
\caption*{The $\Sigma_{s}$-multialgebra $\mathcal{C}$}
\end{minipage}
\hspace{3cm}
\centering
\begin{minipage}[t]{4cm}
\centering
\begin{tikzcd}
    0 \arrow[r, "s_{\mathcal{B}}"]  & 1 \arrow[loop right, out=30, in=-30, distance=3em]{}{s_{\mathcal{B}}}
  \end{tikzcd}
\caption*{The $\Sigma_{s}$-multialgebra $\mathcal{B}$}
\end{minipage}
\end{figure}

\subsection{\ldots being disconnected and having a strong basis}\label{disc. and basis}

We give an alternative approach to being $\textbf{cdf}$-generated: being disconnected and having a strong basis, in a sense we now define. 
Remember that $\Sigma$-algebras with the universal mapping condition for the entire class of $\Sigma$-algebras (i.e. algebras of terms) are easier to be defined than the ones with the universal mapping property for some proper variety. That is why we start with multialgebras of terms, adopting the terminology ``strong basis''. Nonetheless, we still wish to be able in the future to define what would be a $\Sigma$-multialgebra satisfying the universal mapping property for some proper class of $\Sigma$-multialgebras, whose classical counterparts, on many contexts as that of vector spaces, have basis defined as minimal, not minimum, generating sets.

\begin{definition}
We say $B\subseteq A$ is a {\it strong basis} of the $\Sigma$-multialgebra $\mathcal{A}=(A, \{\sigma_{\mathcal{A}}\}_{\sigma\in\Sigma})$ if it is the minimum of the set $\mathcal{G}=\{S\subseteq A\ : \  \langle S\rangle=A\}$ ordered by inclusion.
\end{definition}

\begin{example}
The set of variables $\mathcal{V}$ is a strong basis of $\textbf{T}(\Sigma, \mathcal{V})$.
\end{example}

\begin{example}
The set of elements without predecessor of a directed forest of height $\omega$ is a strong basis of the forest, considered as a $\Sigma_{s}$-multialgebra.
\end{example}

\begin{lemma}\label{ground cap span is in set}
For every subset $S$ of the universe of a $\Sigma$-multialgebra $\mathcal{A}$, $G(\mathcal{A})\cap \langle S\rangle\subseteq S$.
\end{lemma}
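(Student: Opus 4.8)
The plan is to unwind the definitions of $\langle S\rangle$ and $G(\mathcal{A})$ and argue that any element of $\langle S\rangle$ lying outside $S$ must have been ``produced'' by a multioperation, and hence belongs to the build $B(\mathcal{A})$ rather than to the ground. Concretely, I would take an arbitrary $a\in G(\mathcal{A})\cap\langle S\rangle$ and show $a\in S$. Since $\langle S\rangle=\bigcup_{m\in\mathbb{N}}\langle S\rangle_{m}$, there is a least $m$ with $a\in\langle S\rangle_{m}$, and I would split on whether $m=0$ or $m>0$.

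In the base case $m=0$, we have $a\in\langle S\rangle_{0}=S\cup\bigcup_{\sigma\in\Sigma_{0}}\sigma_{\mathcal{A}}$. If $a\in\bigcup_{\sigma\in\Sigma_{0}}\sigma_{\mathcal{A}}$, then $a$ lies in the output of a nullary multioperation, so by definition $a\in B(\mathcal{A})$; this contradicts $a\in G(\mathcal{A})=A\setminus B(\mathcal{A})$. Hence the only remaining possibility is $a\in S$, as desired. In the inductive case $m>0$, minimality of $m$ gives $a\in\langle S\rangle_{m}\setminus\langle S\rangle_{m-1}$, so by the definition of $\langle S\rangle_{m}$ there exist $n\in\mathbb{N}$, $\sigma\in\Sigma_{n}$ and $a_{1},\ldots,a_{n}\in\langle S\rangle_{m-1}$ with $a\in\sigma_{\mathcal{A}}(a_{1},\ldots,a_{n})$. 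Again this places $a$ in $B(\mathcal{A})$, contradicting $a\in G(\mathcal{A})$. Thus this case cannot occur, and in every admissible situation $a\in S$.

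There is essentially no hard part here: the argument is a minimal-index (equivalently, induction on $m$) case analysis, and the entire content is the observation that membership in $\langle S\rangle_{m}$ for any element not already in $S$ forces membership in the build. The only point requiring a moment's care is the base case, where one must remember that the stage $\langle S\rangle_{0}$ already absorbs the outputs $\sigma_{\mathcal{A}}$ of the nullary operators, and that these outputs are part of $B(\mathcal{A})$; overlooking this would leave a gap precisely for the constant symbols. With that handled, the inclusion $G(\mathcal{A})\cap\langle S\rangle\subseteq S$ follows immediately.
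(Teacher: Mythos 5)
Your proof is correct and follows essentially the same route as the paper: both arguments rest on the single observation that any element entering some stage $\langle S\rangle_{m}$ other than through $S$ itself lies in the build $B(\mathcal{A})$, hence outside the ground. The paper phrases this as an induction showing $x\notin\langle S\rangle_{m}$ for every $m$ whenever $x\in G(\mathcal{A})\setminus S$, while you take the least $m$ with $a\in\langle S\rangle_{m}$ and rule out every case except $a\in S$; this is only a cosmetic difference.
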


\begin{proof}
Suppose $x\in G(\mathcal{A})\cap \langle S\rangle$: if $x\notin S$, we will show that $x$ cannot be in $\langle S\rangle$, which contradicts our assumption. Indeed, if $x\notin S$ then 
\begin{equation*}x\notin\langle S\rangle_{0}=S\cup \bigcup_{\sigma\in\Sigma_{0}}\sigma_{\mathcal{A}},\end{equation*}
since $x\notin S$, and $x\in G(\mathcal{A})$ implies that 
\begin{equation*}x\in A\setminus B(\mathcal{A}) \subseteq A\setminus \bigcup_{\sigma\in\Sigma_{0}}\sigma_{\mathcal{A}}.\end{equation*}

Now, for induction hypothesis, suppose that $x\notin\langle S\rangle_{m}$. Then,
\begin{equation*}x\notin \langle S\rangle_{m+1}=\langle S\rangle_{m}\cup\bigcup \big\{\sigma_{\mathcal{A}}(a_{1}, \ldots  , a_{n}) \ : \  n\in\mathbb{N}, \, \sigma\in\Sigma_{n}, \, a_{1}, \ldots  , a_{n}\in \langle S\rangle_{m} \big\}\end{equation*}
since $x\notin \langle S\rangle_{m}$, and $x\in G(\mathcal{A})$ implies that 
\begin{equation*}x\in A\setminus B(\mathcal{A}) \subseteq A\setminus \bigcup \big\{\sigma_{\mathcal{A}}(a_{1}, \ldots  , a_{n}) \ : \  n\in\mathbb{N}, \, \sigma\in\Sigma_{n}, \, a_{1}, \ldots  , a_{n}\in \langle S\rangle_{m} \big\}.\end{equation*}
\end{proof}

\begin{theorem}\label{ground is in basis}
If the $\Sigma$-multialgebra $\mathcal{A}$ has a strong basis $B$, $G(\mathcal{A})\subseteq B$.
\end{theorem}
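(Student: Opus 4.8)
The plan is to show $G(\mathcal{A}) \subseteq B$ by combining the defining property of the strong basis with Lemma~\ref{ground cap span is in set}. The key observation is that a strong basis $B$ is a \emph{generating} set, so $\langle B\rangle = A$; this is exactly the hypothesis that lets Lemma~\ref{ground cap span is in set} bite, since that lemma holds for \emph{every} subset $S$, in particular for $S = B$.

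First I would recall that, by definition, $B$ being a strong basis means $B \in \mathcal{G} = \{S \subseteq A : \langle S\rangle = A\}$, and in particular $\langle B\rangle = A$. Next I would apply Lemma~\ref{ground cap span is in set} with $S = B$, which yields
\begin{equation*}
G(\mathcal{A}) \cap \langle B\rangle \subseteq B.
\end{equation*}
Then I would substitute $\langle B\rangle = A$ to rewrite the left-hand side as $G(\mathcal{A}) \cap A$, and since $G(\mathcal{A}) = A \setminus B(\mathcal{A}) \subseteq A$, this intersection is simply $G(\mathcal{A})$. Hence $G(\mathcal{A}) \subseteq B$, which is the desired conclusion.

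I do not expect any genuine obstacle here: the statement is essentially an immediate corollary of the preceding lemma, and the only ingredient beyond that lemma is the trivial fact that a basis generates the whole universe. The one point worth stating cleanly is the identification $G(\mathcal{A}) \cap A = G(\mathcal{A})$, which follows because the ground is by construction a subset of $A$; everything else is direct substitution. Notably, the minimality clause in the definition of strong basis is not even needed for this inclusion — only that $B$ generates $\mathcal{A}$ — so the proof uses strictly less than the full strength of the hypothesis.
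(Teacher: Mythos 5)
Your proof is correct and is exactly the paper's argument: apply Lemma~\ref{ground cap span is in set} with $S=B$, use $\langle B\rangle=A$, and conclude $G(\mathcal{A})=G(\mathcal{A})\cap\langle B\rangle\subseteq B$. Your added remark that only the generating property of $B$ (not its minimality) is used is accurate and consistent with the paper.
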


\begin{proof}
By Lemma~\ref{ground cap span is in set}, $G(\mathcal{A})=G(\mathcal{A})\cap A=G(\mathcal{A})\cap\langle B\rangle\subseteq B$.
\end{proof}

\begin{definition}
If $B$ is a strong basis of a disconnected $\Sigma$-multialgebra $\mathcal{A}$, we define the $B$-{\it order} of an element $a\in A$ as the natural number 
\begin{equation*}o_{B}(a)=\min \big\{k\in\mathbb{N}\ : \  a\in\langle B\rangle_{k}\big\}.\end{equation*}
\end{definition}

This is a clear generalization of the order, or complexity, of a term. In fact, the order of a term in $T(\Sigma, \mathcal{V})$ is exactly its $\mathcal{V}$-order.

It is clear that, if $a\in \sigma_{\mathcal{A}}(a_{1}, \ldots  , a_{n})$ and $o_{B}(a)\geq 1$, then $o_{B}(a_{1}), \ldots  , o_{b}(a_{n})<o_{B}(a)$. In fact, suppose $m+1=o_{B}(a)$, implying that
\begin{equation*}a\in \langle B\rangle_{m+1}=\langle B\rangle_{m}\cup \bigcup \big\{\sigma_{\mathcal{A}}(a_{1}, \ldots  , a_{n}) \ : \  n\in\mathbb{N}, \, \sigma\in\Sigma_{n}, \, a_{1}, \ldots  , a_{n}\in \langle B\rangle_{m} \big\}.\end{equation*}
Since $m=\min\{k\in\mathbb{N}\ : \  a\in\langle B\rangle_{k}\}$, we have that $a\notin \langle B\rangle_{m}$ and therefore
\begin{equation*}a\in \bigcup \big\{\sigma_{\mathcal{A}}(a_{1}, \ldots  , a_{n}) \ : \  n\in\mathbb{N}, \, \sigma\in\Sigma_{n}, \, a_{1}, \ldots  , a_{n}\in \langle B\rangle_{m} \big\}.\end{equation*}
Finally, we obtain that there exist $p\in\mathbb{N}$, $\theta\in\Sigma_{p}$ and $b_{1}, \ldots  , b_{p}\in \langle B\rangle_{m}$ such that $a\in \theta_{\mathcal{A}}(b_{1}, \ldots  , b_{p})$. Since $a\in \sigma_{\mathcal{A}}(a_{1}, \ldots  , a_{n})$, this implies that  $\sigma_{\mathcal{A}}(a_{1}, \ldots  , a_{n})\cap\theta_{\mathcal{A}}(b_{1}, \ldots  , b_{p})\neq\emptyset$, and therefore $p=n$, $\theta=\sigma$ and $b_{1}=a_{1}$, \ldots  , $b_{p}=a_{n}$, so that $o_{B}(a_{1}), \ldots  , o_{B}(a_{n})\leq m$.

But what if $a\in \sigma_{\mathcal{A}}(a_{1}, \ldots  , a_{n})$, for $n>0$, and $o_{B}(a)=0$, implying $a\in B$? We claim this case cannot occur, for if it does, 
\begin{equation*}B^{*}=\big(B\cup\{a_{1}, \ldots  , a_{n}\}\big)\setminus\{a\}\end{equation*}
generates $A$, while clearly not containing $B$. We have that $a\in \langle B^{*}\rangle_{1}$, since $a_{1}, \ldots  , a_{n}\in B^{*}$ and $a\in\sigma_{\mathcal{A}}(a_{1}, \ldots  , a_{n})$, and given that $B\setminus\{a\}\subseteq B^{*}$, it follows that $B\subseteq \langle B^{*}\rangle_{1}$, and so $\langle B\rangle_{0}\subseteq\langle B^{*}\rangle_{1}$. 

It is then true that $\langle B\rangle_{m}\subseteq \langle B^{*}\rangle_{m+1}$, for every $m\in\mathbb{N}$. Indeed,  if this is true for $m$, let $b\in \langle B\rangle_{m+1}$, and then either $b\in \langle B\rangle_{m}$, so that $b\in \langle B^{*}\rangle_{m+1}\subseteq \langle B^{*}\rangle_{m+2}$, or there exist $\theta\in\Sigma_{p}$ and $b_{1}, \ldots  , b_{p}\in \langle B\rangle_{m}$ such that $b\in\theta_{\mathcal{A}}(b_{1}, \ldots  , b_{p})$. In this case, since $\langle B\rangle_{m}\subseteq\langle B^{*}\rangle_{m+1}$, we have that
\begin{equation*}b\in \theta_{\mathcal{A}}(b_{1}, \ldots  , b_{p})\subseteq \bigcup \big\{\sigma_{\mathcal{A}}(a_{1}, \ldots  , a_{n}) \ : \  n\in\mathbb{N}, \, \sigma\in\Sigma_{n}, \, a_{1}, \ldots  , a_{n}\in \langle B^{*}\rangle_{m+1} \big\} \subseteq\langle B^{*}\rangle_{m+2},\end{equation*}
so once again $b\in\langle B^{*}\rangle_{m+2}$. Since $\langle B\rangle=\bigcup_{m\in\mathbb{N}}\langle B\rangle_{m}$ equals $A$, we have that $\langle B^{*}\rangle$ also equals $A$, as we previously stated. This is absurd, since $B$ is the minimum of $\{S\subseteq A\ : \  \langle S\rangle=A\}$, ordered by inclusion, and $B\not\subseteq B^{*}$. The conclusion must be that if $a\in\sigma_{\mathcal{A}}(a_{1}, \ldots  , a_{n})$ for $n>0$, then $o_{B}(a_{1}), \ldots  , o_{B}(a_{n})<o_{B}(a)$, regardless of the value of $o_{B}(a)$.

\begin{lemma}\label{ground is basis if disc}
If $\mathcal{A}$ is disconnected and has a strong basis $B$, then $B=G(\mathcal{A})$ and so $\mathcal{A}$ is generated by its ground.
\end{lemma}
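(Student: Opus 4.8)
The plan is to prove the two inclusions $B \subseteq G(\mathcal{A})$ and $G(\mathcal{A}) \subseteq B$ separately, and the second is already available. By Theorem~\ref{ground is in basis}, since $\mathcal{A}$ has the strong basis $B$, we immediately have $G(\mathcal{A}) \subseteq B$. So the entire work lies in establishing the reverse inclusion $B \subseteq G(\mathcal{A})$, after which the two inclusions combine to give $B = G(\mathcal{A})$; the final clause, that $\mathcal{A}$ is generated by its ground, then follows at once since $B$ is by definition a generating set and $B = G(\mathcal{A})$.

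To prove $B \subseteq G(\mathcal{A})$, I would argue by contraposition: take $a \in B$ and suppose $a \notin G(\mathcal{A}) = A \setminus B(\mathcal{A})$, so that $a \in B(\mathcal{A})$. Then there exist $\sigma \in \Sigma_{n}$ and $a_{1}, \ldots, a_{n} \in A$ with $a \in \sigma_{\mathcal{A}}(a_{1}, \ldots, a_{n})$. Here I would split on the arity: if $n = 0$, then $a$ is a value of a constant multioperation, so $a \in \langle B \setminus \{a\}\rangle_{0}$, and one checks directly that $(B \setminus \{a\})$ still generates $A$, contradicting the minimality of $B$. If $n > 0$, this is exactly the situation analyzed in the paragraph immediately preceding the lemma: the discussion there shows, using disconnectedness together with the minimality of the strong basis, that an element of $B$ cannot lie in $\sigma_{\mathcal{A}}(a_{1}, \ldots, a_{n})$ for $n > 0$. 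That argument constructs the set $B^{*} = (B \cup \{a_{1}, \ldots, a_{n}\}) \setminus \{a\}$, proves by induction that $\langle B\rangle_{m} \subseteq \langle B^{*}\rangle_{m+1}$ for all $m$, hence $\langle B^{*}\rangle = A$, and derives a contradiction with $B$ being the minimum generating set since $B \not\subseteq B^{*}$.

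The main obstacle, and the crux of the proof, is the $n > 0$ case, but this has essentially been discharged in advance by the lengthy computation preceding the lemma statement. Thus in the proof itself I expect to simply invoke that preceding analysis rather than re-derive it: the key insight already recorded is that disconnectedness prevents the new basis candidate $B^{*}$ from ``losing'' any generating power when we remove $a$ and add its immediate predecessors $a_{1}, \ldots, a_{n}$, because no other multioperation application can reach $a$, so replacing $a$ by its predecessors is a genuine demotion in order that still generates everything. The one gap to fill cleanly is the $n = 0$ (constant) case, which the preamble does not explicitly treat; there I would note that if $a \in \sigma_{\mathcal{A}}$ for some nullary $\sigma$, then $a \in \langle \emptyset\rangle_{0} \subseteq \langle B \setminus \{a\}\rangle$, so again $B \setminus \{a\}$ generates $A$, contradicting minimality. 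Combining both cases yields $a \in G(\mathcal{A})$, completing $B \subseteq G(\mathcal{A})$ and hence the lemma.
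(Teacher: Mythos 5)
Your proof is correct and takes essentially the same route as the paper's: the paper likewise picks $a\in B\setminus G(\mathcal{A})$, splits on the arity $n$, disposes of $n>0$ by invoking the order analysis from the preceding paragraph (the $B^{*}$ construction, in the form $o_{B}(a)>o_{B}(a_{1})\geq 0$), and handles $n=0$ by noting that $B\setminus\{a\}$ still generates. The only difference is that you make explicit the appeal to Theorem~\ref{ground is in basis} for the inclusion $G(\mathcal{A})\subseteq B$, which the paper leaves implicit.
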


\begin{proof}
Suppose $a\in B\setminus G(\mathcal{A})$. Since $a$ is in the build of $\mathcal{A}$, there exist $\sigma\in\Sigma_{n}$ and elements $a_{1}, \ldots  , a_{n}\in A$ such that $a\in\sigma_{\mathcal{A}}(a_{1}, \ldots  , a_{n})$. If $n>0$, $o_{B}(a)>o_{B}(a_{1})\geq 0$, which contradicts the fact that $a\in B$ and therefore $o_{B}(a)=0$.

If $n=0$, it is clear that $B^{*}=B\setminus \{a\}$ is a generating set smaller than $B$: generating set because, if $a\in \sigma_{\mathcal{A}}$, $a\in \bigcup_{\sigma\in\Sigma_{0}}\sigma_{\mathcal{A}}$ and therefore $B\subseteq \langle B^{*}\rangle_{0}$, so that $\langle B\rangle_{m}\subseteq\langle B^{*}\rangle_{m+1}$. This is also a contradiction, since $B$ is a strong basis.
\end{proof}

\begin{theorem}\label{theorem 3}
$\mathcal{A}$ is generated by its ground and disconnected iff it has a strong basis and it is disconnected.
\end{theorem}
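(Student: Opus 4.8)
The plan is to prove the biconditional by establishing each direction separately, leaning heavily on the lemmas already developed in this subsection. The statement asserts that, for any $\Sigma$-multialgebra $\mathcal{A}$, being generated by its ground and disconnected is equivalent to having a strong basis and being disconnected. Since disconnectedness appears on both sides, the real content is showing that, \emph{under the assumption of disconnectedness}, being generated by the ground is equivalent to possessing a strong basis.

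For the forward direction, suppose $\mathcal{A}$ is disconnected and generated by its ground $G(\mathcal{A})$. I would show that $G(\mathcal{A})$ is itself a strong basis, i.e. the minimum of $\mathcal{G}=\{S\subseteq A : \langle S\rangle = A\}$ under inclusion. By hypothesis $\langle G(\mathcal{A})\rangle = A$, so $G(\mathcal{A})\in\mathcal{G}$; it remains to verify minimality, namely that $G(\mathcal{A})\subseteq S$ for every generating set $S$. This follows directly from Lemma~\ref{ground cap span is in set}: since $\langle S\rangle = A$, we have $G(\mathcal{A}) = G(\mathcal{A})\cap A = G(\mathcal{A})\cap\langle S\rangle\subseteq S$. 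Thus $G(\mathcal{A})$ is contained in every generating set and is itself generating, so it is the minimum, hence a strong basis. Disconnectedness carries over trivially, completing this direction.

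For the reverse direction, suppose $\mathcal{A}$ is disconnected and has a strong basis $B$. This is precisely the hypothesis of Lemma~\ref{ground is basis if disc}, which already yields that $B = G(\mathcal{A})$ and that $\mathcal{A}$ is generated by its ground. Again disconnectedness is preserved, so $\mathcal{A}$ is generated by its ground and disconnected, as required.

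Both directions reduce almost immediately to results proved earlier, so I do not anticipate a genuine obstacle here; the theorem is essentially a packaging of Lemma~\ref{ground cap span is in set} (which supplies minimality of the ground among generating sets) together with Lemma~\ref{ground is basis if disc} (which supplies the converse identification of a strong basis with the ground under disconnectedness). The only point requiring a moment of care is confirming that a strong basis is genuinely \emph{unique} when it exists, so that identifying it with $G(\mathcal{A})$ is unambiguous; but this is automatic, since a minimum of a partially ordered set is unique by definition. Hence the proof is a short concatenation of the two cited lemmas.
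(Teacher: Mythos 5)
Your proof is correct and follows essentially the same route as the paper: the reverse direction is exactly Lemma~\ref{ground is basis if disc}, and the forward direction derives minimality of $G(\mathcal{A})$ among generating sets from Lemma~\ref{ground cap span is in set}, just as the paper does. No gaps; your added remark that disconnectedness is not actually needed for the forward implication is accurate and consistent with the paper's argument.
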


\begin{proof}
We already proved, in Lemma~\ref{ground is basis if disc}, that if $\mathcal{A}$ is disconnected and has a strong basis $B$, then it is generated by its ground and disconnected. Conversely, if $\mathcal{A}$ is disconnected and generated by its ground, first of all it is clearly disconnected.

Now, if $\langle G(\mathcal{A})\rangle=A$, one has that $G(\mathcal{A})\subseteq S$ for every $S\in \{S\subseteq A\ : \  \langle S\rangle=A\}$, by Lemma~\ref{ground cap span is in set}. Therefore, the ground is a strong basis.
\end{proof}

Once again, we ask ourselves: does being disconnected imply having a strong basis or vice-versa? We show that this is not the case by providing examples of a multialgebra that is disconnected but does not have a strong basis, and one of a multialgebra that has a strong basis but is not disconnected.

\begin{example}
Take the signature $\Sigma_{s}$ and the $\Sigma_{s}$-multialgebra $\mathcal{C}$ from Example~\ref{C}.

We know that $\mathcal{C}$ is disconnected, but we also state that it does not have a strong basis: in fact, we see that the set $\big\{S\subseteq \{-1,1\}\ : \  \langle S\rangle=\{-1,1\}\big\}$ is exactly $\big\{\{-1\}, \{1\}, \{-1,1\}\big\}$, and this set has no minimum.
\end{example}

\begin{example}
Take the $\Sigma_{s}$-multialgebra $\mathcal{B}$ from Example~\ref{B}.

As we saw before, $\mathcal{B}$ is not disconnected. However we state that it has a strong basis: $B=\{0\}$ generates $\mathcal{B}$ and, since $\{1\}$ does not generate the multialgebra, we find that $B$ is a minimum generating set.
\end{example}

From these two examples, one could hypothesize that for a multialgebra being generated by its ground is equivalent to having a strong basis. Clearly, being generated by its ground implies having a strong basis, that is, the ground. But as we show in the example below, having a strong basis does not imply being generated by its ground.

\begin{example}
Take the signature $\Sigma_{s}$ from Example~\ref{s}, and consider the $\Sigma_{s}$-multialgebra $\mathcal{M}=(\{-1, 0, 1\}, \{s_{\mathcal{M}}\})$ such that $s_{\mathcal{M}}(0)=\{0\}$, $s_{\mathcal{M}}(1)=\{1\}$ and $s_{\mathcal{M}}(-1)=\{1\}$ (that is, $s_{\mathcal{M}}(x)=\{{\sf abs}(x)\}$, where ${\sf abs}(x)$ denotes the absolute value of $x$).

We have that $G(\mathcal{M})=\{-1\}$ and that $\langle\{-1\}\rangle=\{-1, 1\}$, so that $\mathcal{M}$ is not generated by its ground. But we state that $\{-1, 0\}$ is a strong basis. First of all, it clearly generates $\mathcal{M}$. Furthermore, the generating sets of $\mathcal{M}$ are only $\{-1, 0\}$ and $\{-1, 0, 1\}$, so that $\{-1, 0\}$ is in fact the smallest generating set.

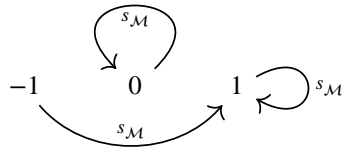
\begin{figure}[H]
\centering
\begin{tikzcd}
    -1 \arrow[rr, bend right=50]{}{s_{\mathcal{M}}}  & 0 \arrow[u, loop]{}{s_{\mathcal{M}}} & 1 \arrow[loop right, out=30, in=-30, distance=3em]{}{s_{\mathcal{M}}}
  \end{tikzcd}
\caption*{The $\Sigma_{s}$-multialgebra $\mathcal{M}$}
\end{figure}
\end{example}

\subsection{\ldots being disconnected and chainless}\label{disc. and chain.}

The last equivalence to being a submultialgebra of $\textbf{mT}(\Sigma, \mathcal{V}, \kappa)$ we give depends on the notion of being chainless, which is rather graph-theoretical in nature. Think of a tree that ramifies ever downward. One can pick any vertex and proceed, against the arrows, upwards until an element without predecessor is reached. More than that, it is not possible to find an infinite path, starting in any one vertex, by always going against the arrows: such a path, if it existed, would be what we shall call a chain. A multialgebra without chains is, very naturally, chainless.

As it was in the case of strong basis, there isn't a parallel concept to being chainless in universal algebra: it seems that this concept is far more natural when dealing with multioperations, although it can be easily applied to algebras if one wishes to do so. Closely related (although not equivalent) to chains are the branches in the formation trees of terms: if allowed to grow infinitely, these would became chains.

Given a permutation $\tau:\{1, \ldots  , n\}\rightarrow\{1, \ldots  , n\}$ in $S_{n}$, the group of permutations on $n$ elements, the action of $\tau$ in an $n$-tuple $(x_{1}, \ldots  , x_{n})\in X^{n}$ is given by
\begin{equation*}\tau(x_{1}, \ldots  , x_{n})=(x_{\tau(1)}, \ldots  , x_{\tau(n)}).\end{equation*}
Given $1\leq i, j\leq n$, we define $[i,j]$ to be the permutation such that $[i,j](i)=j$, $[i,j](j)=i$ and, for $k\in\{1, \ldots  , n\}$ different from $i$ and $j$, $[i,j](k)=k$.

\begin{definition}
Given a $\Sigma$-multialgebra $\mathcal{A}$, a sequence $\{a_{n}\}_{n\in\mathbb{N}}$ of elements of $A$ is said to be a {\it chain} if, for every $n\in\mathbb{N}$, there exist a positive natural number $m_{n}\in\mathbb{N}\setminus\{0\}$, a functional symbol $\sigma^{n}\in\Sigma_{m_{n}}$, a permutation $\tau_{n}\in S_{m_{n}}$ and elements $a_{1}^{n}, \ldots  , a_{m_{n}-1}^{n}\in A$ such that 
\begin{equation*}a_{n}\in \sigma^{n}_{\mathcal{A}}(\tau_{n}(a_{n+1}, a_{1}^{n}, \ldots  , a_{m_{n}-1}^{n})).\end{equation*}

A $\Sigma$-multialgebra is said to be {\it chainless} when it has no chains.
\end{definition}

\begin{example}
Take a directed forest of height $\omega$ and add a loop to it, that is, choose a vertex $v$ and add an arrow from $v$ to $v$. Then, $\{a_{n}\}_{n\in\mathbb{N}}$ such that $a_{n}=v$, for every $n\in\mathbb{N}$, is a chain.
\end{example}

\begin{example}
$\textbf{T}(\Sigma, \mathcal{V})$ is chainless.
\end{example}

\begin{lemma}\label{chainless implies gen by ground}
If $\mathcal{A}$ is chainless, then it is generated by its ground.
\end{lemma}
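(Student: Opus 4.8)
The plan is to argue by contraposition: assuming that $\mathcal{A}$ is \emph{not} generated by its ground, I would construct a chain, which contradicts chainlessness. So suppose $\langle G(\mathcal{A})\rangle \neq A$ and fix some $a_{0} \in A \setminus \langle G(\mathcal{A})\rangle$.

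The heart of the argument is a single recursive step. Given any $a \notin \langle G(\mathcal{A})\rangle$, I first note that $a \notin G(\mathcal{A})$ (since $G(\mathcal{A}) \subseteq \langle G(\mathcal{A})\rangle_{0} \subseteq \langle G(\mathcal{A})\rangle$), so $a \in B(\mathcal{A})$; that is, there are $\sigma \in \Sigma_{m}$ and $b_{1}, \ldots, b_{m} \in A$ with $a \in \sigma_{\mathcal{A}}(b_{1}, \ldots, b_{m})$. The arity must satisfy $m \geq 1$, since an arity-$0$ witness would place $a$ in $\bigcup_{\sigma \in \Sigma_{0}}\sigma_{\mathcal{A}} \subseteq \langle G(\mathcal{A})\rangle_{0}$, contrary to $a \notin \langle G(\mathcal{A})\rangle$. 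Moreover, at least one argument $b_{j}$ must fail to lie in $\langle G(\mathcal{A})\rangle$: otherwise each $b_{i} \in \langle G(\mathcal{A})\rangle_{k_{i}}$, and taking $k = \max_{i} k_{i}$ would give $a \in \sigma_{\mathcal{A}}(b_{1}, \ldots, b_{m}) \subseteq \langle G(\mathcal{A})\rangle_{k+1}$, again a contradiction. I would then select such a $b_{j}$ as the successor and rewrite the membership $a \in \sigma_{\mathcal{A}}(b_{1}, \ldots, b_{m})$ in the format required by the definition of a chain, namely $a \in \sigma_{\mathcal{A}}(\tau(b_{j}, c_{1}, \ldots, c_{m-1}))$, where $c_{1}, \ldots, c_{m-1}$ are the remaining arguments (in any fixed order) and $\tau$ is a permutation rearranging the listed tuple back into $(b_{1}, \ldots, b_{m})$, which exists because the two tuples are reorderings of one another.

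Iterating this step starting from $a_{0}$ produces a sequence $\{a_{n}\}_{n \in \mathbb{N}}$ with each $a_{n} \notin \langle G(\mathcal{A})\rangle$ and each membership of the exact shape $a_{n} \in \sigma^{n}_{\mathcal{A}}(\tau_{n}(a_{n+1}, a_{1}^{n}, \ldots, a_{m_{n}-1}^{n}))$ demanded by the definition of chain. This sequence is therefore a chain, contradicting the hypothesis that $\mathcal{A}$ is chainless, and the contrapositive is established.

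The main point to handle with care is that the construction makes infinitely many choices — one successor $a_{n+1}$ at each stage, together with the auxiliary data $\sigma^{n}$, $\tau_{n}$ and the remaining arguments — so the recursion should be justified by the axiom of dependent choice rather than by a plain induction. The two small observations driving the recursive step (that an arity-$0$ operation cannot witness membership in $B(\mathcal{A})$ for an element outside $\langle G(\mathcal{A})\rangle$, and the level-maximization showing that an output all of whose arguments lie in $\langle G(\mathcal{A})\rangle$ again lies in $\langle G(\mathcal{A})\rangle$) are routine, and they are the only places where the precise definitions of ground and of $\langle \cdot \rangle_{m}$ are used.
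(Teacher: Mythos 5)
Your proposal is correct and follows essentially the same route as the paper's proof: contraposition, picking $a_{0}\notin\langle G(\mathcal{A})\rangle$, and recursively extracting an argument outside $\langle G(\mathcal{A})\rangle$ at each stage to build a chain, with the permutation used only to match the syntactic format of the chain definition. Your explicit remarks on the arity being at least $1$ and on the use of dependent choice are refinements the paper leaves implicit, but the argument is the same.
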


\begin{proof}
Suppose that $\mathcal{A}$ is not generated by its ground. Thus, $A\setminus\langle G(\mathcal{A})\rangle$ is not empty, and must therefore contain some element $a_{0}$. We create a chain $\{a_{n}\}_{n\in\mathbb{N}}$ by induction, the case $n=0$ being already done.

So, suppose we have created a finite sequence of elements $a_{0}, \ldots  , a_{k}\in A\setminus \langle G(\mathcal{A})\rangle$ such that, for each $0\leq n< k$, there exist a positive integer $m_{n}\in\mathbb{N}\setminus\{0\}$, a functional symbol $\sigma^{n}\in\Sigma_{m_{n}}$, a permutation $\tau_{n}\in S_{m_{n}}$ and elements $a_{1}^{n}, \ldots  , a_{m_{n}-1}^{n}\in A$ such that 
\begin{equation*}a_{n}\in \sigma^{n}_{\mathcal{A}}(\tau_{n}(a_{n+1}, a_{1}^{n}, \ldots  , a_{m_{n}-1}^{n})).\end{equation*}

Since $a_{k}\in A\setminus \langle G(\mathcal{A})\rangle$, we have that $a_{k}$ is not an element of the ground. So, there must exist $m_{k}\in\mathbb{N}$, a functional symbol $\sigma^{k}\in\Sigma_{m_{k}}$ and elements $b_{1}^{k}, \ldots  , b_{m_{k}}^{k}\in A$ such that
\begin{equation*}a_{k}\in\sigma^{k}_{\mathcal{A}}(b_{1}^{k}, \ldots  , b_{m_{k}}^{k}).\end{equation*}
Now, if all $b_{1}^{k}, \ldots  , b_{m_{k}}^{k}$ belonged to $\langle G(\mathcal{A})\rangle$, so would $a_{k}$: there must be an element $a_{k+1}\in\{b_{1}^{k}, \ldots  , b_{m_{k}}^{k}\}$, say $b_{l}^{k}$, such that $a_{k+1}\in A\setminus \langle G(\mathcal{A})\rangle$. We then define $a_{i}^{k}$ as $b_{j}^{k}$, for $j=\min\{i\leq p\leq m_{k}\ : \  p\neq l\}$ and $i\in\{1, \ldots  , m_{k}-1\}$, and 
\begin{equation*}\tau_{k}=[l-1, l]\circ\cdots\circ[1,2],\end{equation*}
and then it is clear that $\{a_{n}\}_{n\in\mathbb{N}}$ becomes a chain, with the extra condition that $\{a_{n}\}_{n\in\mathbb{N}}\subseteq A\setminus \langle G(\mathcal{A})\rangle$. Therefore $\mathcal{A}$ is not chainless.
\end{proof}

It becomes clear that a disconnected, chainless multialgebra is, by Lemma~\ref{chainless implies gen by ground}, disconnected and generated by its ground. We state, that, in fact, the converse also holds, when we arrive to yet another characterization of being a submultialgebra of $\textbf{mT}(\Sigma, \mathcal{V}, \kappa)$.

So, suppose $\mathcal{A}$ is disconnected and generated by its ground, and let $\{a_{n}\}_{n\in\mathbb{N}}$ be a chain in $\mathcal{A}$. Clearly no $a_{n}$ can belong to the ground, since 
\begin{equation*}a_{n}\in \sigma^{n}_{\mathcal{A}}(\tau_{n}(a_{n+1}, a_{1}^{n}, \ldots  , a_{m_{n}-1}^{n})),\end{equation*}
and therefore $o_{G(\mathcal{A})}(a_{n+1})<o_{G(\mathcal{A})}(a_{n})$, that is, the $G(\mathcal{A})$-order of $a_{n+1}$ is smaller than the $G(\mathcal{A})$-order of $a_{n}$. We obtain a contradiction, since if $o_{G(\mathcal{A})}(a_{0})=m$, then $o_{G(\mathcal{A})}(a_{m+1})<0$, what is impossible. Then, $\mathcal{A}$ must be chainless.

\begin{theorem}\label{theorem 4}
$\mathcal{A}$ is generated by its ground and disconnected iff it is chainless and disconnected.
\end{theorem}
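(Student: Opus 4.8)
The plan is to notice that ``disconnected'' occurs on both sides of the biconditional, so the whole statement reduces to proving, for a disconnected multialgebra $\mathcal{A}$, the equivalence between being generated by its ground and being chainless. I would therefore treat the two implications separately, relying on the two results established immediately before the statement.

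For the implication from chainless to generated by the ground, I would simply invoke Lemma~\ref{chainless implies gen by ground}, which already asserts that every chainless multialgebra is generated by its ground (disconnectedness is not even needed here). Hence if $\mathcal{A}$ is chainless and disconnected, it is in particular generated by its ground, and it remains disconnected trivially.

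For the converse, suppose $\mathcal{A}$ is disconnected and generated by its ground $G(\mathcal{A})$. By Theorem~\ref{theorem 3}, $G(\mathcal{A})$ is then a strong basis, so the $G(\mathcal{A})$-order $o_{G(\mathcal{A})}$ is well-defined on every element of $A$. I would argue by contradiction: assume a chain $\{a_n\}_{n \in \mathbb{N}}$ exists. The chain condition $a_n \in \sigma^n_{\mathcal{A}}(\tau_n(a_{n+1}, a_1^n, \ldots, a_{m_n-1}^n))$, combined with the fact that every argument of a multioperation has strictly smaller $B$-order than any of its outputs (the property secured, via disconnectedness, in the discussion preceding Lemma~\ref{ground is basis if disc}), forces $o_{G(\mathcal{A})}(a_{n+1}) < o_{G(\mathcal{A})}(a_n)$ for all $n$. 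This produces an infinite strictly decreasing sequence of natural numbers, which is absurd; hence $\mathcal{A}$ admits no chain and is chainless.

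The main, and indeed only, delicate point is the strict decrease of the $G(\mathcal{A})$-order across a multioperation, because that order is defined relative to a strong basis and the strictly-decreasing property was proved under the disconnectedness hypothesis. Once Theorem~\ref{theorem 3} guarantees that the ground really is a strong basis in this setting, the earlier argument applies without change, and everything else is the routine well-foundedness contradiction.
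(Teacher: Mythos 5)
Your proof is correct and follows essentially the same route as the paper: the forward use of Lemma~\ref{chainless implies gen by ground}, and for the converse the well-foundedness contradiction via the strict decrease of the $G(\mathcal{A})$-order along a chain. You are in fact slightly more explicit than the paper in noting that Theorem~\ref{theorem 3} is what guarantees the ground is a strong basis, so that the $G(\mathcal{A})$-order and its strict-decrease property are legitimately available.
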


Finally, Theorems \ref{sub mT is cdf-gen ground}, \ref{image of hom is sub}, \ref{cdf-gen iff gen by ground and disc}, \ref{theorem 3} and \ref{theorem 4} can be summarized as follows:

\begin{theorem} \label{charfreemulti}
Let $\mathcal{A}$ be a $\Sigma$-multialgebra. Are equivalent:

\begin{enumerate}
\item $\mathcal{A}$ is a submultialgebra of some $\textbf{mT}(\Sigma, \mathcal{V}, \kappa)$;

\item $\mathcal{A}$ is $\textbf{cdf}$-generated;

\item $\mathcal{A}$ is generated by its ground and disconnected;

\item $\mathcal{A}$ has a strong basis and is disconnected;

\item $\mathcal{A}$ is chainless and disconnected.
\end{enumerate}
\end{theorem}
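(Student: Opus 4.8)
The plan is to assemble the statement directly from the equivalences already proved in this section, treating the previously established implications as a single chain. Since condition (1) is obtained only up to isomorphism in one direction (Theorem~\ref{cdf-gen is iso to sub mT} yields an isomorphic copy, not literal equality), I would first observe that each of the five conditions is invariant under isomorphism of $\Sigma$-multialgebras. This lets me read (1) as ``$\mathcal{A}$ is isomorphic to a submultialgebra of some $\textbf{mT}(\Sigma, \mathcal{V}, \kappa)$'' without loss of generality, so that the phrasing of (1) and the conclusion of Theorem~\ref{cdf-gen is iso to sub mT} match.

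First I would prove (1) $\Rightarrow$ (2): if $\mathcal{A}$ is a submultialgebra of some $\textbf{mT}(\Sigma, \mathcal{V}, \kappa)$, then by Theorem~\ref{sub mT is cdf-gen ground} it is $\textbf{cdf}$-generated (indeed, by its ground), so in particular it is $\textbf{cdf}$-generated. For the converse (2) $\Rightarrow$ (1), Theorem~\ref{cdf-gen is iso to sub mT} shows that any $\textbf{cdf}$-generated $\mathcal{A}$ is isomorphic to a submultialgebra of $\textbf{mT}(\Sigma, X, |A|)$, which is exactly condition (1) read up to isomorphism. Together these give (1) $\Leftrightarrow$ (2).

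Next I would invoke the remaining equivalences verbatim, using condition (3) as a hub: Theorem~\ref{cdf-gen iff gen by ground and disc} gives (2) $\Leftrightarrow$ (3); Theorem~\ref{theorem 3} gives (3) $\Leftrightarrow$ (4); and Theorem~\ref{theorem 4} gives (3) $\Leftrightarrow$ (5). Chaining these with (1) $\Leftrightarrow$ (2) yields, by transitivity, that all five statements are mutually equivalent.

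I do not expect a genuine obstacle here, since the theorem is essentially a consolidation of results proved earlier in the section; the only point requiring care is the isomorphism-invariance remark needed to reconcile the ``is a submultialgebra'' phrasing of (1) with the ``is isomorphic to a submultialgebra'' conclusion of Theorem~\ref{cdf-gen is iso to sub mT}. One should note that disconnectedness, being generated by the ground, having a strong basis, and being chainless are each phrased purely in terms of the multioperations and of the set-generation operator $\langle\,\cdot\,\rangle$, so all are preserved under isomorphisms; this justifies interpreting (1) up to isomorphism and closes the argument.
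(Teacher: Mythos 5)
Your proposal is correct and matches the paper's own treatment, which likewise presents this theorem as a consolidation of Theorems~\ref{sub mT is cdf-gen ground}, \ref{cdf-gen is iso to sub mT}, \ref{cdf-gen iff gen by ground and disc}, \ref{theorem 3} and \ref{theorem 4}. Your explicit remark that condition (1) must be read up to isomorphism (and that all five conditions are isomorphism-invariant) is a point the paper leaves implicit, so it is a welcome clarification rather than a deviation.
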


This leads us to the second main notion introduced in the paper:

\begin{definition}
A {\em weakly free multialgebra} over $\Sigma$ is a multialgebra over $\Sigma$ satisfying any of the equivalent conditions of Theorem~\ref{charfreemulti}.
\end{definition}


By definition, weakly free multialgebras over $\Sigma$ coincide, up-to isomorphisms, with the submultialgebras of the members of the families $\mT(\Sigma, \mathcal{V})$, for some set $\mathcal{V}$ of generators  (recall Definition~\ref{free-mterms}).

It is important to stress the point that, although not all concepts present in the previous theorem have natural counterparts in universal algebra, by defining them for algebras presented as multialgebras we find that all of the conditions in the theorem are valid only for $\Sigma$-algebras of terms. This follows easily from the fact that the only $\textbf{cdf}$-generated algebras are the algebras of terms themselves. That is, weakly free algebras coincide with (absolutely) free algebras. 
Note that any subalgebra of $\textbf{T}(\Sigma, X)$ is of the form  $\textbf{T}(\Sigma, Y)$ for some $Y$. Thus, it can be observed that the generalization in $\textbf{MAlg}(\Sigma)$ of the collection of subalgebras of $\textbf{T}(\Sigma, \mathcal{V})$ corresponds to the class of submultialgebras of the members of the family $\mT(\Sigma, \mathcal{V})$. In turn, the meaning of $\textbf{T}(\Sigma, \mathcal{V})$ itself  is generalized in the category $\textbf{MAlg}(\Sigma)$ of $\Sigma$-multialgebras throughout the class of submultialgebras of the members $\mathcal{A}$ of $\mT(\Sigma, \mathcal{V})$ such that $G(\mathcal{A})=\mathcal{V}$.

Now, a few examples concerning being chainless, disconnected, having a strong basis and being generated by the ground will be given.

\begin{example}
Take the signature $\Sigma_{s}$ from Example~\ref{s}, and consider the $\Sigma_{s}-$multialgebra $\mathcal{Y}=(\mathbb{N}\cup\{a,b\}, \{s_{\mathcal{Y}}\})$ such that $s_{\mathcal{Y}}(n)=\{n+1\}$, for $n\in\mathbb{N}$, and $s_{\mathcal{Y}}(a)=s_{\mathcal{Y}}(b)=\{0\}$.

We see that $\mathcal{Y}$ is chainless since, given a chain $\{a_{n}\}_{n\in\mathbb{N}}$, it must be contained in the build of $\mathcal{Y}$, that is, $\mathbb{N}$: but then $a_{n+1}=a_{n}-1$, what is a contradiction, since there is only a finite number of elements smaller than $a_{0}$. At the same time, $\mathcal{Y}$ is not disconnected, since $s_{\mathcal{Y}}(a)=s_{\mathcal{Y}}(b)$.

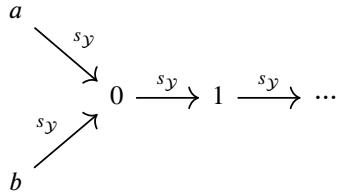
\begin{figure}[H]
\centering
\begin{tikzcd}
a\arrow[dr]{}{s_{\mathcal{Y}}} & & & \\
    & 0\arrow[r]{}{s_{\mathcal{Y}}} & 1\arrow[r]{}{s_{\mathcal{Y}}} & \cdots\\
b\arrow[ur]{}{s_{\mathcal{Y}}} & & & \\
  \end{tikzcd}
\caption*{The $\Sigma_{s}$-multialgebra $\mathcal{Y}$}
\end{figure}
\end{example}

\begin{example}
Take the $\Sigma_{s}$-multialgebra $\mathcal{C}$ from Example~\ref{C}.

We know that $\mathcal{C}$ is disconnected, however it is also not chainless: in fact, $\{(-1)^{n}\}_{n\in\mathbb{N}}$ and $\{(-1)^{n+1}\}_{n\in\mathbb{N}}$ are chains in $\mathcal{C}$.
\end{example}

As we saw, being chainless implies being generated by its ground and having a strong basis. The converse, however, is not true.

\begin{example}
Take the  $\Sigma_{s}$-multialgebra $\mathcal{B}$ from Example~\ref{B}.

We have already established that $\mathcal{B}$ has a strong basis and is generated by its ground, $\{0\}$, yet it is not chainless: $\{ 1\}_{n\in\mathbb{N}}$ is a chain in $\mathcal{B}$.
\end{example}

\section{Multialgebras cannot satisfy the universal mapping property}\label{UMP}

Now, we turn to a somewhat folkloric result: the category of multialgebras does not have free objects. This is equivalent to saying that there does not exist multialgebras satisfying the universal mapping property for the class of all $\Sigma$-multialgebras, or better yet, that the forgetful functor from this category to $\textbf{Set}$ does not have a left adjoint. Of course, such a result can be stated in various ways, depending on the adopted definition of homomorphism and even on the definition of multialgebra to be considered. So, we offer what we consider to be a simple proof of such result for the category $\textbf{MAlg}(\Sigma)$ as we have defined it.

\begin{definition}
A $\Sigma$-multialgebra $\mathcal{A}=(A, \{\sigma_{\mathcal{A}}\}_{\sigma\in\Sigma})$ satisfies the {\it universal mapping property} for the class of all $\Sigma$-multialgebras, over a set $X\subseteq A$ if, for every $\Sigma$-multialgebra $\mathcal{B}=(B, \{\sigma_{\mathcal{B}}\}_{\sigma\in\Sigma})$ and map $f:X\rightarrow B$, there exists a unique homomorphism $\overline{f}:\mathcal{A}\rightarrow\mathcal{B}$ extending $f$.
\end{definition}

In other words, if $j:X\rightarrow A$ is the inclusion, there exists only one homomorphism $\overline{f}:\mathcal{A} \to \mathcal{B}$ commuting the following diagram in {\bf Set}.
\begin{equation*}
\begin{tikzcd}
 & {A} \arrow{dr}{\overline{f}} \\
X \arrow{ur}{j} \arrow{rr}{f} && {B}
\end{tikzcd}
\end{equation*}

\begin{prop}
If $\mathcal{A}$ and $\mathcal{B}$ satisfy the universal mapping property for the class of all $\Sigma$-multialgebras over, respectively, $X$ and $Y$ such that $|X|=|Y|$, then $\mathcal{A}$ and $\mathcal{B}$ are isomorphic.
\end{prop}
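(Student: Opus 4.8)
The plan is to run the standard ``uniqueness of free objects'' argument, taking care at the final step that in $\textbf{MAlg}(\Sigma)$ a bijective homomorphism need not be an isomorphism (only a bijective \emph{full} homomorphism is). Fix a bijection $\phi:X\rightarrow Y$ with inverse $\psi=\phi^{-1}:Y\rightarrow X$. Regarding $\phi$ as a map $X\rightarrow B$ (by composing with the inclusion $Y\subseteq B$), the universal mapping property of $\mathcal{A}$ over $X$ yields a unique homomorphism $\overline{\phi}:\mathcal{A}\rightarrow\mathcal{B}$ with $\overline{\phi}|_{X}=\phi$. Symmetrically, regarding $\psi$ as a map $Y\rightarrow A$, the universal mapping property of $\mathcal{B}$ over $Y$ yields a unique homomorphism $\overline{\psi}:\mathcal{B}\rightarrow\mathcal{A}$ with $\overline{\psi}|_{Y}=\psi$.

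Next I would show that these two homomorphisms are mutually inverse. Consider $\overline{\psi}\circ\overline{\phi}:\mathcal{A}\rightarrow\mathcal{A}$. For every $x\in X$ we have $\overline{\psi}(\overline{\phi}(x))=\overline{\psi}(\phi(x))=\psi(\phi(x))=x$, so this composite restricts to the inclusion of $X$ on $X$. The identity homomorphism $id:\mathcal{A}\rightarrow\mathcal{A}$ also restricts to the inclusion on $X$; hence, by the \emph{uniqueness} clause of the universal mapping property of $\mathcal{A}$ over $X$ (applied with target $\mathcal{A}$ and the inclusion playing the role of $f$), we conclude $\overline{\psi}\circ\overline{\phi}=id$. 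The same argument with the roles of $\mathcal{A}$ and $\mathcal{B}$ swapped gives $\overline{\phi}\circ\overline{\psi}=id$. Thus $\overline{\phi}$ is a bijection whose inverse $\overline{\psi}$ is again a homomorphism.

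The main subtle point, and the step I expect to be the real obstacle, is that in this category a bijective homomorphism is not automatically an isomorphism, so I must verify that $\overline{\phi}$ is \emph{full} — exactly the maneuver used at the end of the proof of Theorem~\ref{cdf-gen is iso to sub mT}. Since $\overline{\phi}$ is a homomorphism, the inclusion $\{\overline{\phi}(a):a\in\sigma_{\mathcal{A}}(a_{1},\ldots,a_{n})\}\subseteq\sigma_{\mathcal{B}}(\overline{\phi}(a_{1}),\ldots,\overline{\phi}(a_{n}))$ always holds. For the reverse inclusion, take $b\in\sigma_{\mathcal{B}}(\overline{\phi}(a_{1}),\ldots,\overline{\phi}(a_{n}))$; applying the homomorphism $\overline{\psi}$ gives $\overline{\psi}(b)\in\sigma_{\mathcal{A}}(\overline{\psi}(\overline{\phi}(a_{1})),\ldots,\overline{\psi}(\overline{\phi}(a_{n})))=\sigma_{\mathcal{A}}(a_{1},\ldots,a_{n})$, and since $\overline{\phi}(\overline{\psi}(b))=b$ we exhibit $b$ as the $\overline{\phi}$-image of an element of $\sigma_{\mathcal{A}}(a_{1},\ldots,a_{n})$. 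Hence equality holds for every $\sigma$ and every tuple, so $\overline{\phi}$ is a bijective full homomorphism, i.e. an isomorphism, establishing $\mathcal{A}\cong\mathcal{B}$.
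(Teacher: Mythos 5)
Your proposal is correct and follows essentially the same route as the paper: fix mutually inverse bijections between $X$ and $Y$, extend them via the universal mapping property, and use the uniqueness clause to conclude that the two extensions compose to the identities. The only difference is that you explicitly verify fullness of $\overline{\phi}$ at the end, whereas the paper stops at mutual invertibility and relies on its earlier remark that categorical isomorphisms in $\textbf{MAlg}(\Sigma)$ coincide with bijective full homomorphisms; your extra check is sound and makes the argument self-contained.
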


\begin{proof}
Since $X$ and $Y$ are of the same cardinality, there exists bijective functions $f:X\rightarrow Y$ and $g:Y\rightarrow X$ inverses of each other. Take the extensions $\overline{f}:\mathcal{A}\rightarrow\mathcal{B}$ and $\overline{g}:\mathcal{B}\rightarrow\mathcal{A}$ and we have that $\overline{g}\circ\overline{f}$ is a homomorphism extending $g\circ f=id$, the identity on $X$.

Since the identical homomorphism $Id_{\mathcal{A}}:\mathcal{A}\rightarrow \mathcal{A}$ also extends $id$, we have that $Id_{\mathcal{A}}=\overline{g}\circ\overline{f}$. In a similar way we have that $Id_{\mathcal{B}}=\overline{f}\circ\overline{g}$, so that $\mathcal{A}$ and $\mathcal{B}$ are isomorphic.
\end{proof}

This way we can refer ourselves to the single $\Sigma$-multialgebra satisfying the universal mapping property over $X$, up to isomorphisms.

Remember that we have defined $\textbf{MAlg}(\Sigma)$ as the category whose objects are exactly all $\Sigma$-multialgebras and for which, given $\Sigma$-multialgebras $\mathcal{A}$ and $\mathcal{B}$, $Hom_{\textbf{MAlg}(\Sigma)}(\mathcal{A}, \mathcal{B})$ is the set of all homomorphisms from $\mathcal{A}$ to $\mathcal{B}$. We will denote by $\mathcal{U}:\textbf{MAlg}(\Sigma)\rightarrow \textbf{Set}$ the forgetful functor.

\begin{lemma}\label{adjoint of forget}
The functor $F:\textbf{Set}\rightarrow \textbf{MAlg}(\Sigma)$, associating a set $X$ with a $\Sigma$-multialgebra satisfying the universal mapping property over $X$, which we will denote $FX$, and a function $f:X\rightarrow Y$ with the only homomorphism $\overline{f}:FX\rightarrow FY$ extending $f$, is a left adjoint of $\mathcal{U}$.
\end{lemma}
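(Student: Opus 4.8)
The plan is to derive the adjunction directly from the universal mapping property, by exhibiting a bijection between hom-sets that is natural in both arguments. For a set $X$, let $j_X : X \to \mathcal{U}(FX)$ denote the inclusion of $X$ into the universe of the multialgebra $FX$ satisfying the universal mapping property over $X$. For each set $X$ and each $\Sigma$-multialgebra $\mathcal{B}$, I would define
\[
\Phi_{X,\mathcal{B}} : Hom_{\textbf{MAlg}(\Sigma)}(FX, \mathcal{B}) \to Hom_{\textbf{Set}}(X, \mathcal{U}\mathcal{B}), \qquad \Phi_{X,\mathcal{B}}(h) = \mathcal{U}(h)\circ j_X,
\]
that is, the restriction $h|_X$. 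The surjectivity of $\Phi_{X,\mathcal{B}}$ is precisely the existence clause of the universal mapping property: given any $f:X\to B$, the extension $\overline{f}:FX\to\mathcal{B}$ satisfies $\Phi_{X,\mathcal{B}}(\overline{f})=f$. Its injectivity is the uniqueness clause: two homomorphisms with the same restriction to $X$ both extend that common restriction, hence coincide. Thus $\Phi_{X,\mathcal{B}}$ is a bijection.

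Before checking naturality, I would first confirm that $F$ is genuinely a functor, as the statement asserts. For $\textrm{id}_X$, the identity homomorphism $\textrm{id}_{FX}$ restricts to $j_X$ on $X$, so the uniqueness of extensions forces $F(\textrm{id}_X)=\textrm{id}_{FX}$. For composable $f:X\to Y$ and $g:Y\to Z$, the homomorphism $F(g)\circ F(f)$ restricts on $X$ to $g\circ f$ (because $F(f)$ sends $X$ into $Y$ via $f$ and $F(g)$ restricts to $g$ on $Y$); by uniqueness this equals $F(g\circ f)$. Each step is a direct application of the existence-and-uniqueness content already recorded above.

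Next I would verify that the family $\Phi_{X,\mathcal{B}}$ is natural, i.e. that it commutes with precomposition by $F(g)$ for $g:X'\to X$ in $\textbf{Set}$ and with postcomposition by $\mathcal{U}(k)$ for $k:\mathcal{B}\to\mathcal{B}'$ in $\textbf{MAlg}(\Sigma)$. Both squares reduce, after unwinding the definitions, to the single identity $(k\circ h\circ F(g))|_{X'} = \mathcal{U}(k)\circ (h|_X)\circ g$, which holds because $F(g)$ restricts to $g$ on $X'$ and restriction commutes with postcomposition by set maps. With $\Phi$ established as a bijection natural in $X$ and $\mathcal{B}$, the standard characterization of adjoint functors yields that $F$ is left adjoint to $\mathcal{U}$, the unit being given by the inclusions $j_X$.

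I do not expect a genuine obstacle: the entire argument is the routine passage from a universal mapping property to an adjunction. The only point requiring care is the bookkeeping with the inclusions $j_X$ and the functor $\mathcal{U}$, so that ``$\overline{f}$ extends $f$'' is read correctly as $\mathcal{U}(\overline{f})\circ j_X = f$; and, implicitly, the whole lemma is to be understood under the hypothesis that a multialgebra $FX$ with the universal mapping property exists for every set $X$ — a hypothesis that the remainder of the section will show to be untenable, thereby turning this lemma into one half of a \emph{reductio}.
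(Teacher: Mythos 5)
Your proposal is correct and follows essentially the same route as the paper: both turn the universal mapping property into a bijection of hom-sets (yours is the inverse of the paper's $\Phi_{\mathcal{A},X}\colon Hom_{\textbf{Set}}(X,\mathcal{U}\mathcal{A})\rightarrow Hom_{\textbf{MAlg}(\Sigma)}(FX,\mathcal{A})$, $f\mapsto\overline{f}$) and then verify naturality in both arguments via the uniqueness of extensions. The only difference is cosmetic: you additionally spell out the functoriality of $F$, which the paper leaves implicit, and you make explicit the (correct) observation that the lemma is conditional on the existence of $FX$.
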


\begin{proof}
For $X$ a set and $\mathcal{A}$ a $\Sigma$-multialgebra with universe $A$ we consider the functions, indexed by pairs consisting of a $\Sigma$-multialgebra $\mathcal{A}$ and a set $X$, 
\begin{equation*}\Phi_{\mathcal{A}, X}:Hom_{\textbf{Set}}(X,\mathcal{U}\mathcal{A})\rightarrow Hom_{\textbf{MAlg}(\Sigma)}(FX, \mathcal{A})\end{equation*}
taking a map $f:X\rightarrow A$ to the only homomorphism $\overline{f}:FX\rightarrow \mathcal{A}$ extending $f$. Each $\Phi_{\mathcal{A}, X}$ is clearly a bijection given that $FX$ satisfies the universal mapping property over $X$.

Now, given sets $X$ and $Y$, $\Sigma$-multialgebras $\mathcal{A}$ and $\mathcal{B}$, a function $f:Y\rightarrow X$ and a homomorphism $h:\mathcal{A}\rightarrow \mathcal{B}$, we have only to prove that the following diagram commutes in {\bf Set}.
\begin{equation*}
\begin{tikzcd}[sep=huge]
Hom_{\textbf{Set}}(X, \mathcal{U}\mathcal{A}) \arrow{r}{\Phi_{\mathcal{A}, X}} \arrow{d}{Hom(f, \mathcal{U}h)}& Hom_{\textbf{MAlg}(\Sigma)}(FX, \mathcal{A}) \arrow{d}{Hom(Ff, h)} \\%
Hom_{\textbf{Set}}(Y, \mathcal{U}\mathcal{B}) \arrow{r}{\Phi_{\mathcal{B}, Y}}& Hom_{\textbf{MAlg}(\Sigma)}(FY, \mathcal{B})
\end{tikzcd}
\end{equation*}

So we take a function $g:X\rightarrow \mathcal{U}\mathcal{A}$. Taking the upper right side of the diagram we have $\Phi_{\mathcal{A}, X}g=\overline{g}$ and $Hom(Ff, h)\overline{g}=h\circ \overline{g}\circ Ff$; on the lower left one, $Hom(f, \mathcal{U}h)g=\mathcal{U}h\circ g\circ f$ and $\Phi_{\mathcal{B}, Y}\mathcal{U}h\circ g\circ f=\overline{\mathcal{U}h\circ g\circ f}$.

Now, both $h\circ \overline{g}\circ Ff$ and $\overline{\mathcal{U}h\circ g\circ f}$ are homomorphisms from $FY$ to $\mathcal{B}$ extending $\mathcal{U}h\circ g\circ f:Y\rightarrow \mathcal{U}\mathcal{B}$. For the second one this is obvious, for the first we take an element $y\in Y$ and see that
\begin{equation*}h\circ \overline{g}\circ Ff(y)=h\circ\overline{g}\circ f(y)=h\circ g\circ f(y)=\mathcal{U}h\circ g\circ f(y)\end{equation*}
since, respectively: $Ff=\overline{f}$ (and $\overline{f}$ extends $f$); $\overline{g}$ extends $g$ (which is defined on $X\ni f(y)$); and $\mathcal{U}h=h$, (considered only as a function between sets).

Given that $FY$ satisfies the universal mapping property over $Y$, we have that $h\circ \overline{g}\circ Ff=\overline{\mathcal{U}\varphi\circ g\circ f}$ and the diagram in fact commutes.
\end{proof}

\begin{theorem}\label{no f-gen multi}
Given a non-empty signature $\Sigma$ and a set $X$, there does not exist a $\Sigma$-multialgebra which satisfies the universal mapping property over $X$.
\end{theorem}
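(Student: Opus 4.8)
The plan is to argue by contradiction: assume some $\Sigma$-multialgebra $\mathcal{A}=(A,\{\sigma_{\mathcal{A}}\}_{\sigma\in\Sigma})$ satisfies the universal mapping property over $X\subseteq A$, and produce, in each of two complementary cases, a target multialgebra and a map along which the required homomorphism either fails to be unique or fails to exist. Both targets will have the two-element universe $\{0,1\}$, so the whole argument stays elementary; the only structural input is that $\Sigma$ is non-empty. I would split on whether $X$ is a proper subset of $A$ or equal to $A$.

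First suppose $X\subsetneq A$. I would take the \emph{chaotic} target $\mathcal{B}_1=(\{0,1\},\{\sigma_{\mathcal{B}_1}\}_{\sigma\in\Sigma})$ defined by $\sigma_{\mathcal{B}_1}(c_1,\ldots,c_n)=\{0,1\}$ for every $\sigma\in\Sigma_n$ and all $c_1,\ldots,c_n\in\{0,1\}$. This is a legitimate multialgebra, since every output is non-empty, and the homomorphism condition $\{h(a):a\in\sigma_{\mathcal{A}}(a_1,\ldots,a_n)\}\subseteq\sigma_{\mathcal{B}_1}(h(a_1),\ldots,h(a_n))=\{0,1\}$ is met by \emph{every} function $h:A\to\{0,1\}$. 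Fixing any $f:X\to\{0,1\}$ and choosing an element $a_0\in A\setminus X$, the two total functions that extend $f$ and send $a_0$ to $0$ and to $1$ respectively are both homomorphisms $\mathcal{A}\to\mathcal{B}_1$ extending $f$; being distinct, they contradict the uniqueness demanded by the universal mapping property.

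It remains to treat $X=A$. Here the inclusion $j:X\to A$ is the identity, so the only candidate extension of a map $f:A\to B$ is $f$ itself; hence the universal mapping property over $A$ forces \emph{every} function $f:A\to B$, into \emph{every} $\Sigma$-multialgebra $\mathcal{B}$, to be a homomorphism. To refute this I would use a \emph{rigid} target: picking some $\sigma\in\Sigma_n$ (available because $\Sigma\neq\emptyset$), set $\mathcal{B}_2=(\{0,1\},\{\theta_{\mathcal{B}_2}\}_{\theta\in\Sigma})$ with $\theta_{\mathcal{B}_2}(\vec{c})=\{0\}$ for all $\theta$ and all tuples $\vec{c}$. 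Choosing any $a_1,\ldots,a_n\in A$ (which exist since $A\neq\emptyset$) and any $a'\in\sigma_{\mathcal{A}}(a_1,\ldots,a_n)$ (non-empty by definition of a multialgebra), the function $f$ sending $a'$ to $1$ and everything else to $0$ fails the homomorphism condition, since $f(a')=1\notin\{0\}=\sigma_{\mathcal{B}_2}(f(a_1),\ldots,f(a_n))$; note that the constancy of $\theta_{\mathcal{B}_2}$ makes this work even if $a'$ occurs among the $a_i$. Thus $f$ is not a homomorphism, contradicting what the universal mapping property required.

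The two cases are exhaustive, so no such $\mathcal{A}$ can exist. The one genuinely delicate point is the corner $X=A$: there the chaotic target $\mathcal{B}_1$ is powerless, since its unique extension of $f$ is just $f$ itself, which is always a homomorphism into $\mathcal{B}_1$. One must therefore notice that over the full universe the property collapses to ``every function is a homomorphism'' and break it with the rigid target $\mathcal{B}_2$, whose effectiveness rests precisely on $\Sigma$ being non-empty so that the build $B(\mathcal{A})$ is non-empty. One could alternatively route the same idea through characterization~(3) of Theorem~\ref{charfreemulti}, but the two explicit two-element targets yield the most direct contradiction.
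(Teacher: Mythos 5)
Your proof is correct, but it takes a genuinely different and more elementary route than the paper's. The paper applies the machinery it has just developed: it maps $\mathcal{A}$ into $\textbf{T}(\Sigma,\mathcal{V})$ for $\mathcal{V}\supsetneq X$ via the unique extension $\overline{j}$ of the inclusion, then uses the fact that $\textbf{T}(\Sigma,\mathcal{V})$ is $\textbf{cdf}$-generated by $\mathcal{V}$ to build two homomorphisms $id_C,id_D$ into $\textbf{mT}(\Sigma,\mathcal{V},2)$ (via two collections of choices selecting $\sigma^{0}$ and $\sigma^{1}$ respectively) that agree only on $\mathcal{V}$; uniqueness in the universal mapping property forces $id_C\circ\overline{j}=id_D\circ\overline{j}$, hence $\overline{j}(A)\subseteq\mathcal{V}$, which is absurd since $\Sigma\neq\emptyset$ forces the build of $\mathcal{A}$ to be non-empty. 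You instead split on $X\subsetneq A$ versus $X=A$ and use two two-element targets: the chaotic one (every operation returns $\{0,1\}$) makes every function a homomorphism and so kills uniqueness when $A\setminus X\neq\emptyset$, while the rigid one (every operation returns $\{0\}$) kills existence when $X=A$, since there the extension must be $f$ itself and you exhibit an $f$ violating the homomorphism condition at some $a'\in\sigma_{\mathcal{A}}(a_1,\ldots,a_n)$. Your case analysis is exhaustive, both targets are legitimate multialgebras, and you correctly isolate the single place where $\Sigma\neq\emptyset$ is indispensable (the $X=A$ corner, where the chaotic target is powerless). What your argument buys is complete self-containment — it needs only the definitions of multialgebra and homomorphism, and it cleanly separates the two distinct failure modes (non-uniqueness versus non-existence). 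What the paper's argument buys is a demonstration that the notions of Section~\ref{Characterizations} (collections of choices, $\textbf{cdf}$-generation, multialgebras of terms) do real work, which is the stated purpose of Section~\ref{UMP}; it also locates the obstruction structurally, in the fact that a homomorphic image of $\mathcal{A}$ would have to land inside the ground of a term multialgebra.
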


\begin{proof}
Suppose that $\mathcal{A}=(A, \{\sigma_{\mathcal{A}}\}_{\sigma\in\Sigma})$ satisfies the universal mapping property over $X$ and let $\mathcal{V}$ be a set that properly contains $X$, meaning that $\mathcal{V}\neq \emptyset$ and therefore that $\textbf{T}(\Sigma, \mathcal{V})$ is well defined. Then, take the identity function $j: X\rightarrow T(\Sigma, \mathcal{V})$, such that $j(x)=x$ for every $x\in X$, and the homomorphism $\overline{j}:\mathcal{A}\rightarrow\textbf{T}(\Sigma, \mathcal{V})$ extending $j$.

Now, take the identity function $id:\mathcal{V}\rightarrow T(\Sigma^{2}, \mathcal{V})$ and the collections of choices $C$ and $D$ from $\textbf{T}(\Sigma, \mathcal{V})$ to $\textbf{mT}(\Sigma, \mathcal{V}, 2)$ such that, for $\sigma\in\Sigma_{n}$,
\begin{equation*}C\sigma_{\alpha_{1}, \ldots  , \alpha_{n}}^{\beta_{1}, \ldots  , \beta_{n}}(\sigma\alpha_{1} \ldots  \alpha_{n})=\sigma^{0}\beta_{1} \ldots \beta_{n}\end{equation*}
and
\begin{equation*}D\sigma_{\alpha_{1}, \ldots  , \alpha_{n}}^{\beta_{1}, \ldots  , \beta_{n}}(\sigma\alpha_{1} \ldots  \alpha_{n})=\sigma^{1}\beta_{1} \ldots \beta_{n},\end{equation*}
and consider the only homomorphisms $id_{C}, id_{D}:\textbf{T}(\Sigma, \mathcal{V})\rightarrow\textbf{mT}(\Sigma, \mathcal{V}, 2)$ extending, respectively, $id$ and $C$, and $id$ and $D$, which we know to exist given that $\textbf{T}(\Sigma, \mathcal{V})$ is \textbf{cdf}-generated by $\mathcal{V}$. Since $id_{C}\circ \overline{j}, id_{D}\circ\overline{j}:\mathcal{A}\rightarrow \textbf{mT}(\Sigma, \mathcal{V}, 2)$ both extend the function $j':X\rightarrow T(\Sigma^{2}, \mathcal{V})$ such that $j'(x)=x$ for every $x\in X$ (recalling that $\mathcal{V}$ properly contains $X$), we have $id_{C}\circ \overline{j}=id_{D}\circ\overline{j}$.

Now, if $\alpha\in T(\Sigma, \mathcal{V})\setminus \mathcal{V}$, we have that there exist $\sigma\in \Sigma_{n}$, for some $n\in\mathbb{N}$, and elements $\alpha_{1}, \ldots  , \alpha_{n}\in T(\Sigma, \mathcal{V})$ such that $\alpha=\sigma\alpha_{1} \ldots  \alpha_{n}$. In this case, \begin{equation*}id_{C}(\alpha)=\sigma^{0}id_{C}(\alpha_{1}) \ldots   id_{C}(\alpha_{n})\neq \sigma^{1}id_{D}(\alpha_{1}) \ldots  id_{D}(\alpha_{n})=id_{D}(\alpha),\end{equation*}
given that the leading functional symbols are distinct. From this, $id_{C}$ and $id_{D}$ are always different outside of $\mathcal{V}$. 

Since $id_{C}\circ \overline{j}=id_{D}\circ\overline{j}$, we must have that $\overline{j}(A)\subseteq \mathcal{V}$, and this is absurd since we are assuming $\Sigma$ non-empty. Indeed, if $\Sigma_{0}\neq\emptyset$, for a $\sigma\in\Sigma_{0}$ and $a\in\sigma_{\mathcal{A}}$ we have that $\overline{j}(a)=\sigma$ is in $T(\Sigma, \mathcal{V})$, but not in $\mathcal{V}$. If it is another $\Sigma_{n}$ which is not empty, given $a\in A$ (which exists, given that the universes of multialgebras are assumed to be non-empty) we have that, for $b\in \sigma_{\mathcal{A}}(a, \ldots  , a)$, it holds that $\overline{j}(b)=\sigma(\overline{j}(a), \ldots  , \overline{j}(a))$, which is not in $\mathcal{V}$.

We must conclude that there are no multialgebras with the universal mapping property.
\end{proof}

\begin{cor}\label{no free obj}
The category $\textbf{MAlg}(\Sigma)$ does not have an initial object.
\end{cor}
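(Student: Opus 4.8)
The plan is to reduce the statement to Theorem~\ref{no f-gen multi} by recognizing that an initial object of $\textbf{MAlg}(\Sigma)$ is nothing but a $\Sigma$-multialgebra satisfying the universal mapping property over the empty set. First I would recall that an initial object is a multialgebra $\mathcal{I}$ such that, for every $\Sigma$-multialgebra $\mathcal{B}$, there exists exactly one homomorphism $\mathcal{I}\rightarrow\mathcal{B}$. On the other hand, a multialgebra $\mathcal{A}$ with universe $A$ satisfies the universal mapping property over $X=\emptyset$ precisely when, for every $\mathcal{B}$ and the unique map $f:\emptyset\rightarrow B$, there is a unique homomorphism $\overline{f}:\mathcal{A}\rightarrow\mathcal{B}$ extending $f$. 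Since $\emptyset\subseteq A$ holds automatically and the clause of extending the empty map is vacuous (every function restricts to the empty map on $\emptyset$), this last requirement says exactly that there is a unique homomorphism from $\mathcal{A}$ to each $\mathcal{B}$. Hence satisfying the universal mapping property over $\emptyset$ and being an initial object coincide.

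With this identification in hand, the second step is simply to invoke Theorem~\ref{no f-gen multi} with $X=\emptyset$: since $\Sigma$ is non-empty, there is no $\Sigma$-multialgebra satisfying the universal mapping property over $\emptyset$, and therefore $\textbf{MAlg}(\Sigma)$ has no initial object. I would briefly check that the cited theorem really does cover this case, since its proof only requires a set $\mathcal{V}$ properly containing $X$; for $X=\emptyset$ any non-empty $\mathcal{V}$ suffices, so that $\textbf{T}(\Sigma,\mathcal{V})$ is well defined and the argument there proceeds unchanged.

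The proof is essentially immediate once the reduction is made, so there is no genuine computational obstacle. The only point deserving care, and thus the main thing to get right, is the equivalence in the first paragraph: one must be explicit that the extension clause of the universal mapping property degenerates to nothing when $X=\emptyset$, and that the inclusion $\emptyset\subseteq A$ imposes no constraint, so that the two notions truly agree in both directions rather than one merely implying the other.
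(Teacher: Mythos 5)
Your proposal is correct and follows essentially the same route as the paper: identify an initial object with a multialgebra having the universal mapping property over $\emptyset$, then invoke Theorem~\ref{no f-gen multi}. The only difference is that you spell out the degeneration of the extension clause for $X=\emptyset$ and the choice of a non-empty $\mathcal{V}$ more explicitly than the paper does, which is a welcome clarification but not a different argument.
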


\begin{proof}
We state that, if $\mathcal{A}$ is an initial object, $\mathcal{A}$ has the universal mapping property over $\emptyset$. In fact, for every $\Sigma$-multialgebra $\mathcal{B}$ and map $f:\emptyset\rightarrow B$, there exists a single homomorphism $!_{\mathcal{B}}:\mathcal{A}\rightarrow \mathcal{B}$ extending $f=\emptyset$, that is, the only homomorphism between $\mathcal{A}$ and $\mathcal{B}$. But  multialgebras with the universal mapping property do not exist, by Theorem~\ref{no f-gen multi},. This concludes the proof.
\end{proof}

\begin{theorem}
The forgetful functor $\mathcal{U}:\textbf{MAlg}(\Sigma)\rightarrow\textbf{Set}$ does not have a left adjoint.
\end{theorem}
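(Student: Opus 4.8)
The plan is to argue by contradiction, exploiting the fact that a left adjoint must preserve colimits and, in particular, initial objects. Suppose $F\colon\textbf{Set}\rightarrow\textbf{MAlg}(\Sigma)$ were a left adjoint of $\mathcal{U}$. Since $\emptyset$ is the initial object of $\textbf{Set}$, I would show that $F\emptyset$ is forced to be an initial object of $\textbf{MAlg}(\Sigma)$, which directly contradicts Corollary~\ref{no free obj}.

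Concretely, the only computation needed is the following. For an arbitrary $\Sigma$-multialgebra $\mathcal{B}$, the adjunction provides a bijection
\[Hom_{\textbf{MAlg}(\Sigma)}(F\emptyset, \mathcal{B})\cong Hom_{\textbf{Set}}(\emptyset, \mathcal{U}\mathcal{B}).\]
The right-hand side is a singleton, since there is exactly one function from the empty set into any set. Hence there is exactly one homomorphism from $F\emptyset$ to each $\mathcal{B}$, which is precisely the statement that $F\emptyset$ is initial in $\textbf{MAlg}(\Sigma)$. As $\textbf{MAlg}(\Sigma)$ has no initial object by Corollary~\ref{no free obj}, no such $F$ can exist, so $\mathcal{U}$ has no left adjoint.

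I expect essentially no obstacle along this route: the single point to verify is that a left adjoint sends the initial object to an initial object, and this is immediate from the hom-set isomorphism displayed above, not even requiring the full naturality of the adjunction. An alternative, more hands-on route would bypass Corollary~\ref{no free obj} and instead contradict Theorem~\ref{no f-gen multi} directly: a left adjoint yields, for every set $X$, a universal arrow $\eta_X\colon X\rightarrow\mathcal{U}FX$, and one checks that $FX$ then satisfies the universal mapping property over the subset $\eta_X(X)\subseteq\mathcal{U}FX$. The delicate step there is matching the categorical universal arrow with the paper's subset-based definition of the universal mapping property, which requires $\eta_X$ to be injective; this is automatic when $X$ is a singleton, since every map out of a singleton is injective, so taking $|X|=1$ and identifying $X$ with $\eta_X(X)$ suffices. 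This injectivity issue is the main technical care point of the second route, and it is exactly what the initial-object argument elegantly sidesteps by working with $X=\emptyset$.
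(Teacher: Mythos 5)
Your argument is correct and is essentially identical to the paper's own proof: both assume a left adjoint $F$ exists, observe that $F\emptyset$ would then be an initial object of $\textbf{MAlg}(\Sigma)$ (you via the hom-set bijection directly, the paper via cocontinuity of left adjoints), and conclude by contradiction with Corollary~\ref{no free obj}.
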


\begin{proof}
For suppose we have a left adjoint $F:\textbf{Set}\rightarrow\textbf{MAlg}(\Sigma)$ of $\mathcal{U}$, so that $F$ has a left adjoint and is therefore cocontinuous. Since $\emptyset$ is the initial object in $\textbf{Set}$, we have that $F\emptyset$ must be an initial object in $\textbf{MAlg}(\Sigma)$, which does not exist by Corollary~\ref{no free obj}.
\end{proof}

\section{Conclusions and Future Work} \label{conclusions}

The results obtained along the paper indicate that multialgebras of terms constitute a rich topic of study, and deserve to be further analyzed. Their connections to the theories of graphs and of partial orders seem clear, and suggest other properties of these objects, and possibly other characterizations. Multialgebras have been used in order to get satisfactory non-deterministic semantics for some non-classical logics, in particular paraconsistent logics (see, for instance, \cite[Chapter~6]{CC16}, \cite{CFG} and~\cite{CT20}). From the present study, we hope to obtain, with the aid of $\textbf{mT}(\Sigma, \mathcal{V}, \kappa)$ (now seen as the multialgebra of propositional formulas) and its submultialgebras, new interpretations of existing semantics for logic systems and new semantics altogether. Clearly,  decision problems concerning these multialgebras become relevant and need to be addressed.

Finally, in what is possibly the most important open question concerning multialgebras of terms, we refer back to something we have already mentioned in  this text. In universal algebra, a $\Sigma$-algebra $\mathcal{A}$ has the universal mapping property for a variety $\mathbb{V}$ of $\Sigma$-algebras over a subset $X$ of its universe when, for every $\mathcal{B}$ in $\mathbb{V}$ and every function $f:X\rightarrow B$, there exists a unique homomorphism $f:\mathcal{A}\rightarrow\mathcal{B}$ extending $f$. These are known as the {\em relatively} free algebras, which can be obtained in a variety from a quotient of $\textbf{T}(\Sigma, \mathcal{V})$. Some questions naturally arise: are there analogous of $\textbf{cdf}$-generated multialgebras with respect to classes of multialgebras? If so, are they obtained in some reasonable way from the multialgebras of terms?

\paragraph{Acknowledgements}

The first author acknowledges support from  the  National Council for Scientific and Technological Development (CNPq), Brazil
under research grant 306530/2019-8. The second author was supported by a doctoral scholarship from CAPES, Brazil. 

\end{document}